\documentclass[12pt, reqno]{amsart}
\usepackage{mathrsfs}
\usepackage{amsfonts}
\usepackage[centertags]{amsmath}
\usepackage{amssymb,array}
\usepackage{amsthm}
\usepackage{graphicx}
\usepackage{caption}
\usepackage{enumerate,enumitem}
\SetLabelAlign{center}{\hfill #1\hfill}
\usepackage[textwidth=16cm, hmarginratio=1:1]{geometry}
\usepackage{tikz-cd, tikz}
\usepackage[colorlinks]{hyperref}
\usepackage{float}
\usepackage{afterpage}

\hypersetup{
bookmarksnumbered,
pdfstartview={FitH},
breaklinks=true,
linkcolor=blue,
urlcolor=blue,
citecolor=blue,
bookmarksdepth=2
}
\usetikzlibrary{arrows.meta}

\theoremstyle{plain}
   \newtheorem{theorem}{Theorem}[section]
   
   \newtheorem{proposition}[theorem]{Proposition}
   
   \newtheorem{lemma}[theorem]{Lemma}

\theoremstyle{definition}
   \newtheorem{definition}[theorem]{Definition}
   
   \newtheorem{example}[theorem]{Example}

   \newtheorem{remark}[theorem]{Remark}

\numberwithin{equation}{section}
\allowdisplaybreaks[2]
\newcommand{\be}{\begin{enumerate}}

    \newcommand{\ene}{\end{enumerate}}

    \newcommand{\ZZ}{\mathbb{Z}}
    
    \newcommand{\RR}{\mathbb{R}}

    \newcommand{\CC}{\mathbb{C}}
    \newcommand{\KK}{\mathbb{K}}

    \newcommand{\Hom}{\operatorname{Hom}}

    \newcommand{\Id}{\operatorname{Id}}

    \newcommand{\bfa}{\mathbf{a}}

    \newcommand{\bfi}{\mathbf{i}}
    
    \newcommand{\bfj}{\mathbf{j}}

    \newcommand{\bt}{\mathbf{t}}

    \newcommand{\cA}{\mathcal{A}}

    \newcommand{\cT}{\mathcal{T}}

    \newcommand{\CM}[1]{\mathcal{M}}

      \newcommand{\gmod}{\operatorname{gmod}}

    \newcommand{\Seteq}[2]{=\left\{{#1}\mid\,{#2}\right\}}
    \newcommand{\Set}[2]{\left\{{#1}\mid\,{#2}\right\}}

\DeclareMathOperator*{\wt}{wt}

\newlength{\mysizetiny}
\newlength{\mysizesmall}
\newlength{\mysize}
\newlength{\mysizelarge}
\begin{document}

\title[Monoidal categories and open Richardson varieties]{Monoidal Categories Associated with Kac--Moody Open Richardson Varieties in Symmetric Type}
\author{Yingjin Bi}
\address{Department of Mathematics, Harbin Engineering University}
\email{yingjinbi@mail.bnu.edu.cn}
\date{} 

\begin{abstract}
We study determinantial modules associated with open Richardson varieties
of symmetric Kac--Moody type. For the seed obtained by the
Bao--Ye--M\'enard mutation procedure, we give an explicit bijection between
its vertices and the positions complementary to the leftmost subexpression.
Each initial variable is a multiplicity-one convolution factor of the
corresponding determinantial module, and these determinantial modules are
cluster monomials in that seed. We deduce that the quantum cluster algebra
with non-invertible frozen variables embeds in the Grothendieck ring of
$\mathscr C_{w,v}$, with cluster monomials represented by real simple modules
up to normalization. In finite $ADE$ type, we identify the seed with
Leclerc's seed and obtain a monoidal categorification after localization
at the frozen variables.
\end{abstract}

\raggedbottom
\afterpage{\flushbottom}

\maketitle
\par\kern-1\baselineskip
\tableofcontents
\par\vspace{-2\baselineskip}

\section{Introduction}

Let $G$ be a symmetric Kac--Moody group over $\CC$, with opposite Borel subgroups $B^+$ and $B^-$. Denote by $\mathcal{B}=G/B^+$ the flag variety. For Weyl group elements $v,w\in W$ with $v\leq w$ in the Bruhat order, the intersection
\[
\mathring{\mathcal{B}}_{v,w}:=\mathring{\mathcal{B}}_w\cap \mathring{\mathcal{B}}^{v}
\]
of the Schubert cell $\mathring{\mathcal{B}}_w=B^+wB^+/B^+$ and the opposite Schubert cell $\mathring{\mathcal{B}}^{v}=B^-vB^+/B^+$ is called an \emph{open Richardson variety}.

A central problem is to describe the cluster structures on
$\CC[\mathring{\mathcal B}_{v,w}]$ and their representation-theoretic
realizations. In finite $ADE$ type, Leclerc \cite{leclerc2016cluster}
constructed a cluster subalgebra using Frobenius subcategories of modules
over a preprojective algebra; the variables of a distinguished initial
cluster are irreducible factors of generalized minors. Geometric cluster
structures on Richardson and braid varieties were subsequently developed
in \cite{casals2025cluster,galashin2025braid,serhiyenko2024leclerc}.
M\'enard \cite{menard2022cluster} gave an algorithm for computing initial
seeds, and Bao--Ye \cite{bao2025upper} established an upper cluster
structure on open Richardson varieties in all symmetrizable Kac--Moody
types.

On the monoidal side, Kang--Kashiwara--Kim--Oh
\cite{kang2018monoidal} categorified quantum unipotent coordinate rings by
categories $\mathscr C_w$ of modules over symmetric quiver Hecke algebras.
Kashiwara--Kim--Oh--Park
\cite{kashiwara2018monoidal,kashiwara2023localizations} introduced the
subcategories $\mathscr C_{w,v}$ and their localizations. The issue
addressed here is the explicit identification of the variables in the
Bao--Ye--M\'enard seed within this monoidal framework, rather than the
existence of categorical models for cluster algebras in general.

An earlier finite-Dynkin treatment by the author
\cite{bi2024richardson} approached this problem through Lusztig
parametrizations. The present paper develops a determinantial-factorization
approach in symmetric Kac--Moody type. Its main additional content is the
explicit multiplicity-one correspondence between factors and seed
variables, together with the comparison with M\'enard's seed; the
combinatorial reindexing alone is not the categorification argument.
For the finite-type localization statement, we give a proof using
ambient Laurent positivity and the classical Richardson realization.

Our main construction follows the mutation sequence attached to a
leftmost subexpression and identifies its variables as multiplicity-one
factors of determinantial modules. Conversely, the relevant determinantial
modules are products of variables from this single seed. This description
connects the combinatorial mutation algorithm with Leclerc's
factorization description.

To state the result, fix a reduced expression $\overline{w}=(i_1,\dots,i_r)$ of $w$, and let $\overline{v}=(i_{p_1},\dots,i_{p_m})$ be the leftmost subexpression corresponding to $v$. Let $\mathbf{s}(\overline{v},\overline{w})$ be the associated seed, with vertex set $J$.

\begin{theorem}[{Theorems~\ref{thm:dividminors} and~\ref{thm:menard}}]
There is a monoidal realization $(M_k)_{k\in J}$ of the initial
variables of $\mathbf{s}(\overline v,\overline w)$ in $\mathscr C_{w,v}$
and a canonical bijection
\[
\Phi: J \longrightarrow \{1,\dots,r\}\setminus\{p_1,\dots,p_m\}
\]
such that $M_k$ occurs as a multiplicity-one factor of the determinantial module
\[
M\!\binom{w_{\le \Phi(k)}\varpi_{i_{\Phi(k)}}}{\overline v_{\Phi(k)}\varpi_{i_{\Phi(k)}}}.
\]
Moreover, for each $l\notin \{p_1,\dots,p_m\}$, the determinantial module
\[
M\!\binom{w_{\le l}\varpi_{i_l}}{\overline v_l\varpi_{i_l}}
\]
is, up to a grading shift, a cluster monomial in this seed. In finite
$ADE$ type, M\'enard's seed and Leclerc's seed coincide up to relabeling,
with both written in the same global quiver convention.
\end{theorem}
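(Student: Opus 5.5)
The proof splits into three parts: the bijection $\Phi$, the multiplicity-one factorization, and the comparison with Leclerc's seed.

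\textbf{The bijection $\Phi$.} I would construct $\Phi$ by tracking the Bao--Ye--M\'enard mutation sequence step by step along the reduced word $\overline w=(i_1,\dots,i_r)$. The seed $\mathbf{s}(\overline v,\overline w)$ is built inductively: one starts from the standard seed of the unipotent cell attached to $\overline w$, whose variables correspond to the positions $1,\dots,r$. For each position $p_j$ of the leftmost subexpression, one performs a prescribed sequence of mutations and then deletes (freezes to $1$) a single vertex. By induction on $m$, each step removes exactly the vertex indexed by $p_j$ and relabels the others. This gives a bijection $\Phi\colon J\to\{1,\dots,r\}\setminus\{p_1,\dots,p_m\}$, defined as the position at which the vertex $k$ was last ``created'' in the sequence. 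The leftmost property of the subexpression is what guarantees that the deleted vertex is always the one carrying label $p_j$ at that stage.

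\textbf{Realization and multiplicity-one factors.} For the monoidal realization, I would use the Kang--Kashiwara--Kim--Oh monoidal seed in $\mathscr C_w$, whose initial objects are the determinantial modules $M(w_{\le l}\varpi_{i_l},\varpi_{i_l})$. The key tool is the T-system, i.e.\ short exact sequences of determinantial modules, combined with the fact that a mutation in an admissible monoidal seed produces a real simple head. I would run the same mutation sequence categorically, and at each deletion step pass to the quotient category $\mathscr C_{w,v}$. In $\mathscr C_{w,v}$ the module $M(\cdot,v_{\le}\varpi)$ replaces $M(\cdot,\varpi)$, by the Kashiwara--Kim--Oh--Park description of $\mathscr C_{w,v}$ via $\mathrm{R}$-matrix heads. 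I would prove the following by induction along the sequence:
\[
M\!\binom{w_{\le l}\varpi_{i_l}}{\overline v_l\varpi_{i_l}}\simeq \operatorname{hd}\Bigl(\mathop{\nabla}_{k\in S_l} M_k^{\circ a_k}\Bigr)
\]
up to a grading shift. Here $S_l$ is a set of mutually commuting vertices of the current seed that contains $\Phi^{-1}(l)$, and $a_{\Phi^{-1}(l)}=1$. Because the $M_k$ in $S_l$ belong to one cluster, they strongly commute, so this head equals the convolution itself. Hence the determinantial module is a cluster monomial, and $M_{\Phi^{-1}(l)}$ appears in it with multiplicity one. To get the exponent $1$, I would compare degrees: the $\Lambda$-invariant of the determinantial module, or equivalently its weight, forces the exponent of the newly mutated variable to be exactly one.

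\textbf{Finite $ADE$ type.} Here I would compare both seeds with Leclerc's seed. Leclerc's initial variables are the irreducible factors of the minors $\Delta_{w_{\le l}\varpi,\overline v_l\varpi}$ restricted to $\mathring{\mathcal B}_{v,w}$. By the previous step, each such restricted minor is a cluster monomial in M\'enard's seed, with a multiplicity-one factor that is new at position $l$. A cluster variable is irreducible in the cluster algebra, and the ambient coordinate ring is a UFD in finite type. So the two families of factors coincide, and the bijection is $\Phi$ composed with Leclerc's labelling. Equality of the quivers then follows because both are determined by exchange relations among the same variables: two seeds with the same cluster in a cluster algebra of geometric type with full-rank exchange matrix coincide up to relabeling. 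Full rank holds after adjoining frozens.

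\textbf{Main obstacle.} I expect the hardest step to be the inductive identification of the deleted vertex with $p_j$ and the matching of T-system sequences, that is, proving that the leftmost condition makes the categorical mutation land exactly on $M(\cdot,\overline v_l\varpi)$. My first approach would be to reduce to a length-two local computation. Using the braid-free nature of the leftmost subexpression, I would show that at position $p_j$ the relevant exchange triangle is the T-system for $s_{i_{p_j}}$ acting on $\overline v_{p_j-1}$.
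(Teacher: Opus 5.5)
\textbf{Gap 1: the central inductive step is restated, not proved.} The core of the statement is this claim. After each mutation $\mu_{(i,b_l+k)}$, the new module $P=M\binom{w_{(i,d_l+k)}\varpi_i}{\overline v^{\,l}_{(i,d_l+k)}\varpi_i}$ is a cluster monomial in the current seed, and it contains the new variable exactly once. Your proposal restates this as an inductive hypothesis but gives no argument for it. The T-system together with ``mutation produces a real simple head'' does not yield it. To match the T-system for $P\circ Q$ with the exchange sequence at the pivot, you would need the exchange matrix of every intermediate seed of $\widetilde\mu_l$. You never compute these, and you leave the ``matching'' open yourself.

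The missing idea is the Kashiwara--Kim criterion (Proposition~\ref{pro:commutenotone}): a simple module that strongly commutes with every seed variable except $X_k$ is a cluster monomial either in that seed or in its mutation at $k$. The paper applies it in two steps.
\begin{itemize}
\item It shows that every current variable other than the pivot is a factor of a module in $\mathscr T_{l-1}$ or of some $M\binom{w\varpi_j}{v^s\varpi_j}$ with $s<l-1$; the latter modules cover the variables deleted at earlier stages.
\item It shows that $P$ strongly commutes with all of these modules (Proposition~\ref{lem:comwvs} and Lemma~\ref{lem:d-additivity}).
\end{itemize}
In the paper the T-system serves a different purpose: it gives $\mathfrak d(P,Q)=1$ (Proposition~\ref{pro:Tlkcommute}). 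This equality rules out the unmutated seed. Through $1=\mathfrak d(P,X)=n\,\mathfrak d(X',X)$, it also forces the exponent to be $n=1$. Your ``weight'' argument for the exponent cannot replace this: weights add over all factors and cannot isolate a single exponent.

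\textbf{Gap 2: deletion, and the finite-type quiver comparison.} You misdescribe deletion. In seed labels, the vertex removed at stage $l$ is $(i,n_i-\alpha(i,l)+1)$, the last surviving vertex of color $i$, not $p_l$. Position $p_l$ drops out only of the image of the relabeling $\Phi$. That relabeling is the order-preserving bijection of each color onto its unselected occurrences, which is pure counting. Leftmostness plays no role there; it enters through $v^l\not\le w_{p_l-1}$ and the commuting families.

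More seriously, deleted variables are not set to $1$. They are nontrivial simple modules, and $\mathscr C_{w,v}$ is a full subcategory, not a quotient. You must therefore prove that the factorization of each retained determinantial module in the full seed $\widetilde{\mathbf s}(\overline v,\overline w)$ never involves a deleted variable. The paper does this in two steps.
\begin{itemize}
\item It shows that deleted variables lie outside $\mathscr C_{w,v^l}$, using Lemma~\ref{lem:vvsiw} for $M\binom{w\varpi_i}{v^{l-1}\varpi_i}$ plus induction.
\item It uses that factors of objects of $\mathscr C_{w,v}$ stay in $\mathscr C_{w,v}$ (Lemma~\ref{lem:factor}).
\end{itemize}
The same lemma is also what places the $M_k$ in $\mathscr C_{w,v}$.

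In the ADE part, the principle that a seed is determined by its cluster (Gekhtman--Shapiro--Vainshtein) holds only within one mutation class. M\'enard's and Leclerc's seeds are not known in advance to be mutation-equivalent; note that $\widetilde B$ and $-\widetilde B$ already share all exchange relations. The paper instead identifies M\'enard's maximal rigid object with Leclerc's. It does so using equality of cluster characters in the dual semicanonical basis and an open-orbit argument. Your factor matching also needs primality in $\mathbb C[N]$, not merely irreducibility in the cluster algebra.
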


We distinguish the non-localized statement in symmetric Kac--Moody type
from the localized categorification statement in finite type.

\begin{theorem}[Theorem~\ref{thm:clustercat}, Theorem~\ref{thm:categorification}]
Let $v\le w$ be Weyl group elements of symmetric Kac--Moody type. Then
\[
\overline{\mathcal A}_q(\mathbf s(\overline v,\overline w))
\subset K(\mathscr C_{w,v}),
\]
where the frozen variables on the left are not inverted. Every cluster
monomial is represented by a real simple module, up to the standard
normalization. In finite $ADE$ type,
\[
K(\widetilde{\mathscr C}_{w,v})
\simeq\mathcal A_q(\mathbf s(\overline v,\overline w)),
\]
where the frozen variables are inverted. Thus
$\widetilde{\mathscr C}_{w,v}$ gives a monoidal categorification of this
quantum cluster algebra.
\end{theorem}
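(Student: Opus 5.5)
The first part is the containment $\overline{\mathcal A}_q(\mathbf s(\overline v,\overline w))\subset K(\mathscr C_{w,v})$ with real simple representatives. For this I will use the Kang--Kashiwara--Kim--Oh criterion for monoidal seeds. Take the realization $(M_k)_{k\in J}$ from the first theorem (Theorem~\ref{thm:dividminors}). I will show that the family forms an admissible monoidal seed in $\mathscr C_{w,v}$. This means three things.
\begin{enumerate}[label=(\roman*)]
\item The $M_k$ are real simple and mutually strongly commuting.
\item Their $\Lambda$-invariants recover the quasi-commutation matrix $L$ of $\mathbf s(\overline v,\overline w)$.
\item For every mutable $k$ there is a real simple $M_k'$ with short exact sequences $0\to q^{a}\bigotimes_{b_{ik}>0}M_i^{\otimes b_{ik}}\to M_k\nabla M_k'\to \dots$, and dually for the negative part.
\end{enumerate}

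Points (i) and (ii) should follow from the description of each $M_k$ as a multiplicity-one factor of a determinantial module. Determinantial modules attached to the prefixes $w_{\le l}$ pairwise commute, and their $\Lambda$-values are computed by the pairing of the extremal weights. A multiplicity-one factor of a commuting family inherits commutation; here I would use Kashiwara--Kim--Oh--Park's result that a root module commuting with a product commutes with each factor.

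For (iii), I would follow the Bao--Ye--M\'enard mutation sequence step by step. Each mutation realizes the T-system between determinantial modules, $0\to \bigotimes M(\cdot)\to M(u\varpi_i,u'\varpi_i)\nabla M(us_i\varpi_i,u's_i\varpi_i)\to \bigotimes M(\cdot)\to0$. I would then divide out the common factors, which is legitimate precisely because they occur with multiplicity one.

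Once admissibility holds, the KKKO theorem gives that every cluster monomial is, up to $q$-shift, the class of a real simple module. In particular $\overline{\mathcal A}_q$, with the frozen variables not inverted, lands in $K(\mathscr C_{w,v})$. The grading normalization is fixed by requiring bar-invariance.

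For the finite $ADE$ isomorphism, I would localize at the frozen modules. Kashiwara--Kim--Oh--Park show that $\widetilde{\mathscr C}_{w,v}$ is rigid and that its Grothendieck ring is the localization $K(\mathscr C_{w,v})[\text{frozen}^{-1}]$. The inclusion $\mathcal A_q\subset K(\widetilde{\mathscr C}_{w,v})$ then follows from the previous step. Equality is a dimension/specialization argument. At $q=1$, $K(\widetilde{\mathscr C}_{w,v})$ specializes to $\CC[\mathring{\mathcal B}_{v,w}]$, by the classical Richardson realization together with the dual canonical basis compatibility. By Leclerc's theorem, in the form identified with M\'enard's seed in Theorem~\ref{thm:menard}, together with Galashin--Lam/Serhiyenko--Sherman-Bennett--Williams, the cluster algebra of $\mathbf s$ with frozens inverted equals this coordinate ring.

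To lift equality to the quantum level, I would use ambient Laurent positivity. Every dual canonical basis element of $K(\widetilde{\mathscr C}_{w,v})$ is a Laurent polynomial in the initial cluster. Equality at $q=1$ forces the specialization of the inclusion to be onto. A graded Nakayama argument on weight spaces, which are finite-dimensional with matching characters, then gives equality over $\ZZ[q^{\pm1/2}]$.

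The main obstacle is step (iii): producing the exchange sequences for the divided modules rather than for the full determinantial modules. Dividing a T-system by common factors requires knowing that $M_k'$ is again a multiplicity-one factor of the next determinantial module, and that the head of the divided tensor product is simple. My first approach is to use the $\Lambda$-additivity $\Lambda(X,Y\nabla Z)=\Lambda(X,Y)+\Lambda(X,Z)$ for strongly commuting real $X$, which pins down the degrees. The simplicity of the head would then come from the KKOP criterion $\mathfrak d(M_k,M_k')=1$, computed via the explicit bijection $\Phi$.
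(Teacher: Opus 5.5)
Both halves of your plan have genuine gaps.

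\textbf{The inclusion $\overline{\mathcal A}_q\subset K(\mathscr C_{w,v})$.} Step (iii) does not work as described, and it is also unnecessary.
\begin{itemize}
\item Admissibility needs exchange sequences at the mutable vertices of the \emph{final} seed $\mathbf s(\overline v,\overline w)$. These are not the T-system identities of Proposition~\ref{pro:Tsystemmk}, which only govern the construction sequence $\widetilde\mu_l$.
\item ``Dividing out common factors'' is not a module-theoretic operation. Nothing in your sketch shows the resulting head is simple, lies in $\mathscr C_{w,v}$, or fits into an exchange sequence.
\item Your step (ii) also fails: the $\Lambda$-values of the factors $M_k$ are not determined by those of the determinantial modules.
\item The KKKO criterion as stated in \cite{kang2018monoidal} presupposes $K(\mathscr C)\simeq\mathcal A_q$. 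That is not available in Kac--Moody type.
\end{itemize}

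The missing observation is that $\widetilde{\mathbf s}(\overline v,\overline w)$ is reachable from the initial seed of $\mathscr C_w$, so it is already a monoidal seed there. Moreover, $b_{dk}=0$ for deleted $d$ and retained mutable $k$, and this is preserved by retained mutations. Hence every retained mutation is an ambient mutation with the same exchange monomials. Commutation, $\Lambda$, reality and the exchange sequences $0\to q^aP\to X_k\circ X_k'\to q^bQ\to0$ are therefore all inherited from $\mathscr C_w$.

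The only thing left to prove is $X_k'\in\mathscr C_{w,v}$. The middle term $X_k\circ X_k'$ is an extension of modules in $\mathscr C_{w,v}$, and $\mathbf W^*(X_k')\subset\mathbf W^*(X_k\circ X_k')$ by Lemma~\ref{lem:McircN}. The initial variables lie in $\mathscr C_{w,v}$ by Lemma~\ref{lem:factor}.

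\textbf{The finite-type equality.} There are two problems.

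First, $\widetilde{\mathscr C}_{w,v}$ is localized at the modules $M\binom{w\varpi_j}{v\varpi_j}$, not at the frozen seed modules. You must show that each of these is a frozen convolution monomial, and that each frozen module divides one of them. Otherwise $\mathcal A_q$ (with frozen variables inverted) need not lie in $K(\widetilde{\mathscr C}_{w,v})$.

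Second, and decisively, the lift from $q^{1/2}=1$ fails.
\begin{itemize}
\item An inclusion of free $\mathbb Z[q^{\pm1/2}]$-modules can become surjective after $q^{1/2}\mapsto1$ without being surjective; the submodule $(1+q)\mathbb Z[q^{\pm1/2}]$ already does this.
\item There is no Nakayama lemma for the non-local ring $\mathbb Z[q^{\pm1/2}]$ at $q^{1/2}=1$.
\item Weight spaces of the localized ring need not have finite rank.
\end{itemize}

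What is needed is an upper inclusion. For each retained seed $\mathbf t$, the classical Richardson realization puts every specialized simple class of $\mathscr C_{w,v}$ in the Laurent ring of the retained variables of $\mathbf t$. This uses Leclerc's localization theorem, Theorem~\ref{thm:menard} and Bao--Ye. Ambient positivity then kills every quantum coefficient involving a deleted variable, since a nonnegative Laurent polynomial vanishing at $1$ is zero (Lemma~\ref{lem:positive-specialization}). This gives $K(\mathscr C_{w,v})\subset U_q(\overline v,\overline w)$.

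One then closes with $\mathcal A_q\subset K(\widetilde{\mathscr C}_{w,v})\subset U_q=\mathcal A_q$. The last equality is the quantum $\mathcal A=\mathcal U$ theorem of \cite{casals2025cluster} (Lemma~\ref{lem:uppercluster}). You invoke positivity but never use it to obtain this upper inclusion, and you never use $\mathcal A_q=U_q$.
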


\medskip

\noindent
After the first version of this work, Kashiwara--Kim--Oh--Park
\cite{kashiwara2026seeds} constructed monoidal seeds in $\mathscr C_{w,v}$
using reflection functors and the operators $\mathcal K_i$. They identified
the same family of prime determinantial factors and proved a
Laurent-family property. Their paper explicitly relates these results to
the present work. 
For the final upper inclusion, we use the positive Laurent
expansions of \cite[Lemma~2.3]{kashiwara2019laurent}: the classical
Richardson realization excludes all Laurent monomials involving deleted
variables, and positivity lifts this support restriction to the quantum
algebra.

\medskip

\noindent
\textbf{Structure of the paper.}
In Section~\ref{sec:cluster}, we recall basic material on cluster algebras. In Section~\ref{sec:kacmoody}, we review open Richardson varieties and their cluster structures. In Section~\ref{sec:quiverhecke}, we study determinantial modules. Finally, in Section~\ref{sec:monoidalcat}, we prove the main results.

\medskip

\noindent
\textbf{Acknowledgments.}
The author would like to express his sincere gratitude to Masaki Kashiwara and Ryo Fujita for many helpful discussions and valuable suggestions.

\section{Cluster algebras}\label{sec:cluster}
\subsection{Definition of cluster algebras}
In this section, we will recall the notion of cluster algebras and quantum cluster algebras.
\subsubsection{Cluster algebras}
Let $I$ be a finite set together with a partition
\[
I = I_{\mathrm{uf}} \sqcup I_{\mathrm{fr}}.
\]
Let $B=(b_{ij})$ be an $I\times I_{\mathrm{uf}}$ integer matrix such that the principal part
\[
B_{I_{\mathrm{uf}}\times I_{\mathrm{uf}}}
\]
is skew-symmetric. The elements of $I_{\mathrm{uf}}$ are called \emph{mutable vertices}, and those of $I_{\mathrm{fr}}$ are called \emph{frozen vertices}. We associate with $B$ the ice quiver $Q_B$: a positive entry $b_{ij}$ gives $b_{ij}$ arrows from $i$ to $j$, and a negative entry gives $-b_{ij}$ arrows from $j$ to $i$. Each pair is counted only once, and arrows between frozen vertices are omitted.

Consider the Laurent polynomial ring
\[
\CC[x_i^{\pm1}\mid i\in I],
\]
and let $\CC(x_i\mid i\in I)$ be its field of fractions. For simplicity, we write $\CC[x_i^{\pm1}]$ and $\CC(x_i)$, respectively.

A \emph{seed} is a triple
\[
\bt=\bigl((x_i)_{i\in I},\, B,\, I_{\mathrm{fr}}\bigr).
\]
For $\bfa=(a_i)_{i\in I}\in \ZZ^I$, we set
\[
x^\bfa:=\prod_{i\in I}x_i^{a_i}.
\]

\begin{definition}[Mutation]
Let $k\in I_{\mathrm{uf}}$. The mutation $\mu_k$ at $k$ is defined as follows.
\begin{enumerate}
    \item The mutated exchange matrix $\mu_k(B)=(\mu_k(B)_{ij})$ is given by
    \[
    \mu_k(B)_{ij}=
    \begin{cases}
    -b_{ij}, & \text{if } i=k \text{ or } j=k,\\[1mm]
    b_{ij}+[b_{ik}]_+[b_{kj}]_+-[-b_{ik}]_+[-b_{kj}]_+, & \text{otherwise},
    \end{cases}
    \]
    where $[a]_+:=\max\{a,0\}$.

    \item The mutated cluster variables are
    \[
    \mu_k(x_i)=
    \begin{cases}
    x^{\bfa'}+x^{\bfa''}, & \text{if } i=k,\\
    x_i, & \text{if } i\neq k,
    \end{cases}
    \]
    where $\bfa'=(a_i')_{i\in I}$ and $\bfa''=(a_i'')_{i\in I}$ are defined by
    \begin{equation}\label{eq:bfa}
    a_i'=
    \begin{cases}
    -1, & \text{if } i=k,\\
    \max(0,b_{ik}), & \text{if } i\neq k,
    \end{cases}
    \qquad
    a_i''=
    \begin{cases}
    -1, & \text{if } i=k,\\
    \max(0,-b_{ik}), & \text{if } i\neq k.
    \end{cases}
    \end{equation}
\end{enumerate}
The seed
\[
\mu_k(\bt):=\bigl((\mu_k(x_i))_{i\in I},\,\mu_k(B),\,I_{\mathrm{fr}}\bigr)
\]
is called the mutation of $\bt$ at $k$.
\end{definition}

For a seed $\bt'=\bigl((x_i')_{i\in I},\,B',\,I_{\mathrm{fr}}\bigr)$, the elements $x_i'$ are called the \emph{cluster variables}, and $B'$ is called the \emph{exchange matrix}. If $i\in I_{\mathrm{fr}}$, then $x_i'$ is called a \emph{frozen variable}.

Let $T$ be the set of seeds obtained from $\bt$ by finite sequences of mutations.

\begin{definition}
The \emph{cluster algebra} $\mathcal{A}(\bt)$ associated with the seed $\bt$ is the $\CC$-subalgebra of $\CC(x_i)$ generated by all cluster variables appearing in all seeds $\bt'\in T$. In our convention, the frozen variables are not assumed to be invertible.

The corresponding \emph{upper cluster algebra} is defined by
\[
U(\bt):=\bigcap_{\bt'\in T}\CC[x_{\bt',i}^{\pm1}],
\]
where $x_{\bt',i}$ denotes the $i$th cluster variable of the seed $\bt'$.
\end{definition}

By the Laurent phenomenon, one has
\[
\mathcal{A}(\bt)\subset U(\bt).
\]

\subsubsection{Quantum cluster algebras}

Let $K$ be a finite vertex set, let $K^{\mathrm{ex}}\subseteq K$ be its mutable subset, and let $B$ be an integer $K\times K^{\mathrm{ex}}$ exchange matrix. Let $\Lambda=(\lambda_{ij})$ be a skew-symmetric integer $K\times K$ matrix. We use the symmetric-type normalization in which the pair $(\Lambda,B)$ is \emph{compatible} if
\[
\sum_{k\in K}\lambda_{ik}b_{kj}=2\delta_{ij}
\qquad
\text{for all } i\in K,\ j\in K^{\mathrm{ex}}.
\]

Given such a skew-symmetric matrix $\Lambda$, we define the \emph{quantum torus} $\mathcal{T}_\Lambda$ to be the $\mathbb{K}$-algebra generated by $X_i^{\pm1}$ $(i\in K)$, where $\mathbb{K}=\mathbb{Z}[q^{\pm1/2}]$, subject to the relations
\[
X_iX_j=q^{\lambda_{ij}}X_jX_i,
\qquad
X_iX_i^{-1}=X_i^{-1}X_i=1.
\]

After ordering $K=\{1,\ldots,r\}$, for any vector $\mathbf{a}=(a_1,\dots,a_r)\in \ZZ^K$, define the normalized monomial
\[
X^{\mathbf{a}}
=
q^{\frac12\sum_{i>j}a_ia_j\lambda_{ij}}X_1^{a_1}\cdots X_r^{a_r}.
\]

A \emph{quantum seed} is a tuple
\[
\mathbf{t}:=\bigl((X_i)_{i\in K},\,\Lambda,\,B,\,K^{\mathrm{ex}}\bigr),
\]
where $(\Lambda,B)$ is a compatible pair.

For $k\in K^{\mathrm{ex}}$, the mutation at $k$ is defined as follows. The mutated matrix $\mu_k(\Lambda)$ has entries
\begin{equation}\label{eq_mutations}
\mu_k(\Lambda)_{ij}=
\begin{cases}
-\lambda_{kj}+\sum_{l\in K}\max\{0,-b_{lk}\}\lambda_{lj}, & \text{if } i=k,\ j\neq k,\\[1mm]
-\lambda_{ik}+\sum_{l\in K}\max\{0,-b_{lk}\}\lambda_{il}, & \text{if } i\neq k,\ j=k,\\[1mm]
\lambda_{ij}, & \text{otherwise}.
\end{cases}
\end{equation}
The mutated cluster variables are
\[
\mu_k(X_i)=
\begin{cases}
X_i, & \text{if } i\neq k,\\
X^{\mathbf{a}'}+X^{\mathbf{a}''}, & \text{if } i=k.
\end{cases}
\]
where $\bfa'$ and $\bfa''$ are given by \eqref{eq:bfa}, with the vertex set $I$ replaced by $K$.

It is well known that $(\mu_k(\Lambda),\mu_k(B))$ is again a compatible pair. Thus one obtains a new quantum seed
\[
\mu_k(\mathbf{t})
:=
\bigl((\mu_k(X_i))_{i\in K},\,\mu_k(\Lambda),\,\mu_k(B),\,K^{\mathrm{ex}}\bigr).
\]

\begin{definition}
For a quantum seed $\mathbf{t}$, the \emph{quantum cluster algebra} $\mathcal{A}_q(\mathbf{t})$ is the $\mathbb{K}$-subalgebra generated by all cluster variables $X_i(\mathbf{t}')$ for all seeds $\mathbf{t}'\in T$, together with the inverses of the frozen variables. The \emph{upper quantum cluster algebra} is defined by
\[
U_q(\mathbf{t})=\bigcap_{\bt'\in T}\mathcal{T}(\bt'),
\]
where $\mathcal{T}(\bt')$ denotes the quantum torus associated with the seed $\bt'$. We denote by $\overline{\cA}_q(\bt)$ the $\KK$-subalgebra of $\cT(\bt)$ generated by all cluster variables $X_i(\bt')$ for all seeds $\bt'\in T$. 
All quantum tori in this intersection are viewed in the common skew field of fractions over $\mathbb Q(q^{1/2})$. Thus $\overline{\mathcal A}_q$ is the convention without inverted frozen variables, whereas $\mathcal A_q$ includes those inverses.
\end{definition}

\subsection{Cluster algebras associated with words}\label{sec:wordseed}

In this subsection, we recall the cluster algebra structures associated with words in \(I\).

\subsubsection{Cluster algebras arising from words}
Let $C$ be a symmetric $I\times I$ Cartan matrix. A \emph{word} in $I$ is a sequence
\[
\bfi=(i_1,\dots,i_r),
\]
where $i_k\in I$ for all $1\le k\le r$. We write $[r]:=\{1,\dots,r\}$ for $r\geq1$ and $[0]:=\varnothing$. The color set $I$ is kept distinct from the vertex set $[r]$.

For $k\in [r]$ and $j\in I$, we say that $k$ has \emph{color} $j$ if $i_k=j$. For each $j\in I$, let $n_j$ denote the number of occurrences of the color $j$ in the word $\bfi$.

For $k\in [r]$, define
\[
k^+:=\min\{p>k\mid i_p=i_k\},
\qquad
k^-:=\max\{p<k\mid i_p=i_k\},
\]
with the convention that $k^+=+\infty$ and $k^-=-\infty$ when the corresponding sets are empty.

It is often convenient to label the $l$th occurrence of the color $j$ by $(j,l)$. Thus, if $k=(j,l)$, then $k^+=(j,l+1)$ and $k^-=(j,l-1)$. We also set $(j,0)=-\infty$ and $(j,n_j+1)=+\infty$.

For $j\neq i_k$, define
\[
k(j)^-:=\max\{p<k\mid i_p=j\},
\qquad
k(j)^+:=\min\{p>k\mid i_p=j\},
\]
again allowing the values $\pm\infty$.

Let $\bfi$ be a word of length $r\geq0$. Set
\[
J:=[r],
\qquad
J_{\mathrm{fr}}:=\{k\in J\mid k^+=+\infty\},
\qquad J_{\mathrm{uf}}:=J\setminus J_{\mathrm{fr}}.
\]
We define the exchange matrix $B_{\bfi}=(b_{pq})_{p\in J,\,q\in J_{\mathrm{uf}}}$ by
\[
b_{pq}=
\begin{cases}
-1, & \text{if } p=q^-,\\
1, & \text{if } q=p^-,\\
c_{i_p,i_q}, & \text{if } q<p<q^+<p^+,\\
-c_{i_p,i_q}, & \text{if } p<q<p^+<q^+,\\
0, & \text{otherwise}.
\end{cases}
\]

We denote by
\[
\mathbf{s}(\bfi):=\bigl((x_k)_{k\in J},\,B_{\bfi},\,J_{\mathrm{fr}}\bigr)
\]
the seed associated with the word $\bfi$. The corresponding cluster algebra and upper cluster algebra are denoted by
\[
\cA(\bfi):=\cA(\mathbf{s}(\bfi)),
\qquad
U(\bfi):=U(\mathbf{s}(\bfi)).
\]

For a word $\bfi=(i_1,\dots,i_r)$, define
\[
w_{\le k}=s_{i_1}\cdots s_{i_k}
\qquad
(k\in [r]),\qquad w_{\leq0}:=e.
\]
Use a symmetric Kac--Moody realization as in Section~\ref{sec:kacmoody}, with $(\alpha_i,\alpha_i)=2$ and $(\varpi_i,\alpha_j)=\delta_{ij}$. Let $\Lambda_{\bfi}=(\lambda_{kl})$ be the skew-symmetric matrix determined by
\[
\lambda_{kl}=-\lambda_{lk}=
(w_{\le k}\varpi_{i_k}+\varpi_{i_k},\,w_{\le l}\varpi_{i_l}-\varpi_{i_l})
\qquad
\text{for } k>l.
\]
It is well known that $(\Lambda_{\bfi},B_{\bfi})$ is a compatible pair. We denote by
\[
\mathbf{s}(\bfi):=\bigl((x_k)_{k\in J},\,\Lambda_{\bfi},\,B_{\bfi},\,J_{\mathrm{uf}}\bigr)
\]
the associated quantum seed, and write
\[
\cA_q(\bfi):=\cA_q(\mathbf{s}(\bfi)).
\]
The same notation is used for the classical and quantum seeds when the meaning is clear. We use the quiver and quantum commutation conventions of \cite{kang2018monoidal}, with $b_{pq}>0$ represented by arrows $p\to q$. Geometric source seeds are transported to this convention by reversing all arrows when necessary. The simultaneous change $(\Lambda,B)\mapsto(-\Lambda,-B)$ preserves compatibility, and $\mu_k(-B)=-\mu_k(B)$; moreover, reversing arrows commutes with freezing and deletion and leaves the commutative exchange variables unchanged. Thus all seed comparisons below use this single convention. For the empty word, all matrices and vertex sets are empty, and the associated algebras are their coefficient rings.
\subsubsection{Cluster algebras induced by subwords}
Now let $\bfj$ be a subword of $\bfi$. We write
\[
\bfj=(i_{p_1},\dots,i_{p_m})=(j_1,\dots,j_m),
\]
where $p_k<p_{k+1}$ whenever $1\leq k<m$. Write $\bfj_k:=(i_{p_1},\ldots,i_{p_k})$ and let $\bfj_0$ be the empty word.

For $k\in [m]$, let $i:=i_{p_k}$ and define
\[
a_k:=\#\{s\in [k]\mid i_{p_s}=i\},
\qquad
b_k:=\#\{t\in [p_k]\mid i_t=i \text{ and } t\notin \{p_1,\dots,p_k\}\},
\]
and set
\[
d_k:=a_k+b_k.
\]
Then
\[
p_k=(i,a_k+b_k)=(i,d_k).
\]
We also define
\[
\alpha(j,k):=\#\{t\in [k]\mid i_{p_t}=j\}
\qquad(0\leq k\leq m).
\]

\begin{definition}\label{def:subwordmutation}
Let $\bfi$ be a word and let $\bfj$ be a subword of $\bfi$. For each $k\in [m]$, define a sequence of mutations $\widetilde{\mu}_k$ by
\[
\widetilde{\mu}_k=
\begin{cases}
\mu_{(i,n_i-a_k)}\circ \cdots \circ \mu_{(i,b_k+1)}, & \text{if } a_k+b_k<n_i,\\
\Id, & \text{otherwise},
\end{cases}
\]
where $i=i_{p_k}$.

Set
\[
\widetilde{\mathbf{s}}(\bfj_k,\bfi)
:=
\widetilde{\mu}_k\circ \widetilde{\mu}_{k-1}\circ \cdots \circ \widetilde{\mu}_1\bigl(\mathbf{s}(\bfi)\bigr).
\]
The seed $\mathbf{s}(\bfj_k,\bfi)$ is obtained from $\widetilde{\mathbf{s}}(\bfj_k,\bfi)$ by deleting the vertices
\[
D_k:=\{(j,n_j-\ell+1)\mid j\in I,\ 1\leq\ell\leq\alpha(j,k)\}
\]
and freezing every remaining mutable vertex adjacent to a vertex of $D_k$ in the quiver of $\widetilde{\mathbf{s}}(\bfj_k,\bfi)$. Already frozen vertices remain frozen, and arrows between frozen vertices are discarded. For $k=0$, we set
\[
\widetilde{\mathbf{s}}(\bfj_0,\bfi)
=\mathbf{s}(\bfj_0,\bfi):=\mathbf{s}(\bfi).
\]

Denote the matrices of $\widetilde{\mathbf{s}}(\bfj_k,\bfi)$ by $\widetilde B$ and $\widetilde\Lambda$. Restrict $\widetilde\Lambda$ to the remaining vertices and $\widetilde B$ to the remaining rows and mutable columns. This yields a compatible pair $(\Lambda_{\bfj_k,\bfi},B_{\bfj_k,\bfi})$: for every retained mutable vertex $u$, one has $\widetilde b_{du}=0$ for $d\in D_k$, and hence
\[
\sum_{z\notin D_k}\widetilde\lambda_{pz}\widetilde b_{zu}
=\sum_z\widetilde\lambda_{pz}\widetilde b_{zu}
=2\delta_{pu}.
\]
This definition performs all mutations in the full seed before deleting vertices; no intermediate deletion is implicit.

In particular, we write $\mathbf{s}(\bfj,\bfi):=\mathbf{s}(\bfj_m,\bfi)$ and define
\[
\cA(\bfj,\bfi):=\cA(\mathbf{s}(\bfj,\bfi)),
\qquad
\cA_q(\bfj,\bfi):=\cA_q(\mathbf{s}(\bfj,\bfi)),
\]
\[
U(\bfj,\bfi):=U(\mathbf{s}(\bfj,\bfi)),
\qquad
U_q(\bfj,\bfi):=U_q(\mathbf{s}(\bfj,\bfi)).
\]
\end{definition}

\begin{example}\label{exm:words}
Consider type $A_4$ and the reduced expression
\[
\overline{w}=(1234123121)
\]
of $w_0$. Let
\[
v=s_2s_3s_1s_2s_1<w_0.
\]
The corresponding subexpression $\overline{v}$ of $v$ inside $\overline{w}$ is
\[
(1,\underline{2},\underline{3},4,\underline{1},\underline{2},3,\underline{1},2,1),
\]
so that
\[
(p_1,\dots,p_5)=(2,3,5,6,8).
\]
Moreover,
\[
a_1=1,\ a_2=1,\ a_3=1,\ a_4=2,\ a_5=2,
\qquad
b_1=b_2=0,\ b_3=1,\ b_4=0,\ b_5=1.
\]
It follows that
\[
\widetilde{\mu}_1=\mu_{(2,2)}\mu_{(2,1)}=\mu_6\mu_2,\qquad
\widetilde{\mu}_2=\mu_{(3,1)}=\mu_3,
\]
\[
\widetilde{\mu}_3=\mu_{(1,3)}\mu_{(1,2)}=\mu_8\mu_5,\qquad
\widetilde{\mu}_4=\mu_{(2,1)}=\mu_2,\qquad
\widetilde{\mu}_5=\mu_{(1,2)}=\mu_5.
\]
\end{example}
\subsection{Leftmost subexpressions}\label{sec:leftmost}
We introduce the notation used to describe factors of determinantial modules in $\mathscr C_{w,v}$.

Let $\overline w=(i_1,\ldots,i_r)$ be a reduced expression of $w$, and let $v\leq w$. Among the position sequences of reduced subexpressions for $v$, choose the lexicographically smallest one,
\[
p_1<\cdots<p_m,\qquad
\overline v=(i_{p_1},\ldots,i_{p_m}),\qquad m=\ell(v).
\]
We call $\overline v$ the \emph{leftmost subexpression}. All lexicographic comparisons refer to position sequences, not to color labels. For $0\leq k\leq r$, set
\[
w_k:=s_{i_1}\cdots s_{i_k},\qquad
\overline v_k:=\prod_{p_t\leq k}s_{i_{p_t}},
\]
with products taken in increasing order and empty products equal to $e$. Thus $\overline v_k\leq w_k$.

Following \cite{leclerc2016cluster}, define $v'_0=e$ and
\[
v'_k=
\begin{cases}
v'_{k-1}s_{i_k} & \text{if } s_{i_k}v_{k-1}^{'-1}v < v_{k-1}^{' -1}v,\\
v'_{k-1} & \text{otherwise}.
\end{cases}
\]

\begin{lemma}\label{lem:vk}
Let $\overline{w}=(i_1\cdots i_r)$ be a reduced expression of $w$, and let $v\le w$.
Then, for $0\leq k\leq r$, one has
\[
\overline{v}_k = v'_k,\qquad v'_r=v.
\]
\end{lemma}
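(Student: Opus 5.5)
The plan is to show that Leclerc's recursion is a greedy algorithm. I will prove that it outputs a reduced subexpression for $v$ and that its position sequence is lexicographically minimal, so that it coincides with the leftmost subexpression $\overline v$. The identity $\overline v_k=v'_k$ then follows at once, because both sides are products of the chosen letters at positions $\le k$, taken in increasing order.

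\emph{Step 1 (invariant).} Set $u_k:=v_k'^{-1}v$ and $w_{>k}:=s_{i_{k+1}}\cdots s_{i_r}$; the latter is reduced because $\overline w$ is. By induction on $k$ I will prove
\[
\ell(v)=\ell(v'_k)+\ell(u_k),\qquad u_k\le w_{>k}\ \text{(Bruhat order)}.
\]
For $k=0$ this is $v\le w$. For the inductive step, write $s=s_{i_k}$ and $u=u_{k-1}$, so that $u\le s\,w_{>k}$ with $\ell(s\,w_{>k})=\ell(w_{>k})+1$.
\begin{itemize}
\item If $su<u$, then $v'_k=v'_{k-1}s$ and $u_k=su$. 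Lengths stay additive, since $\ell(u_k)=\ell(u)-1$ and $\ell(v'_k)\le\ell(v'_{k-1})+1$, while $\ell(v)\le\ell(v'_k)+\ell(u_k)$ holds by the triangle inequality. The lifting property (Deodhar's $Z$-property), applied to $u\le s\,w_{>k}$ with $s$ a left descent of both, gives $su\le w_{>k}$.
\item If $su>u$, then $u_k=u$, and the lifting property, applied with $s$ a left descent of $s\,w_{>k}$ but not of $u$, gives $u\le w_{>k}$.
\end{itemize}
At $k=r$ the invariant gives $u_r\le e$, so $u_r=e$ and $v'_r=v$. By additivity of lengths, the set of positions $q_1<\dots<q_m$ at which $v'$ grows is a reduced subexpression for $v$.

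\emph{Step 2 (leftmost).} Let $p_1<\dots<p_m$ be any reduced subexpression for $v$, and suppose the two sequences first differ at index $t$ (so $q_j=p_j$ for $j<t$).
\begin{itemize}
\item Suppose $p_t<q_t$, and put $k=p_t$. Since no $q_j$ lies strictly between $q_{t-1}=p_{t-1}$ and $q_t$, we have $v'_{k-1}=s_{i_{p_1}}\cdots s_{i_{p_{t-1}}}$. Hence
\[
u_{k-1}=s_{i_{p_t}}\cdots s_{i_{p_m}},
\]
and this word is reduced, so $s_{i_k}u_{k-1}<u_{k-1}$. The greedy rule then forces $k$ to be chosen, contradicting $q_t>k$.
\item Therefore $q_t<p_t$ whenever the sequences differ, which means $(q_j)$ is lexicographically smallest.
\end{itemize}
Thus $(q_j)=(p_j)$ for the leftmost subexpression, which gives $\overline v_k=v'_k$ for all $k$.

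\emph{Expected obstacle.} The main difficulty is Step 1, namely checking that the lifting property applies in the correct direction (left multiplication) in both cases. Everything else is bookkeeping about which positions are chosen.
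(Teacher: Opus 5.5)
Your proof is correct, but it runs in the opposite direction from the paper's.

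The paper starts from the leftmost subexpression $\overline v$, whose existence is given by the subword property. It then shows by induction on $k$ that Leclerc's recursion makes the same choice as $\overline v$ at every position.
\begin{itemize}
\item At a selected position, the residual reduced word $s_{i_{p_t}}\cdots s_{i_{p_m}}$ begins with $i_k$, so $s_{i_k}$ is a left descent. This is exactly your Step 2.
\item At an unselected position, the residual $u$ has a reduced word supported in the suffix, so $u\le w_{>k}$. A descent $s_{i_k}u<u$ would then give a reduced subexpression for $s_{i_k}u$ inside the suffix. Prefixing the earlier selected positions and $k$ produces a lexicographically smaller reduced subexpression for $v$, which is a contradiction.
\end{itemize}
The identity $v'_r=\overline v_r=v$ then comes for free.

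You instead analyze the greedy recursion on its own terms. Your Step 1 maintains length additivity and the invariant $u_k\le w_{>k}$ using the lifting and $Z$-properties. This shows the recursion terminates at $v$ along a reduced subexpression, and only afterwards do you compare that output with arbitrary competitors.

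Your route uses slightly stronger Bruhat-order input and needs an extra step, but it proves a bit more. It does not presuppose that $\overline v$ exists, and it shows constructively that Leclerc's recursion always outputs a reduced subexpression of $v$ in $\overline w$, which is moreover lexicographically minimal. The paper's route is shorter because the minimality of $\overline v$ does the work of your Step 1. In the unselected case there, even the exchange condition would suffice: deleting one letter from the reduced residual word already gives the required reduced word for $s_{i_k}u$ inside the suffix.
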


\begin{proof}
The assertion is clear for $k=0$. Suppose it holds at $k-1$, and put
$u:=\overline v_{k-1}^{-1}v$. If $k$ is selected, the remaining reduced expression of $u$ starts with $i_k$, so $s_{i_k}u<u$ and both recursions select $k$.

If $k$ is not selected, the remaining selected positions lie in $(i_{k+1},\ldots,i_r)$. Were $s_{i_k}u<u$, the subword property of Bruhat order would give a reduced subexpression $\gamma$ of this suffix for $s_{i_k}u$. The previously selected positions, followed by $k$ and $\gamma$, would then give a reduced subexpression of $v$ lexicographically smaller than $\overline v$. This is impossible. Thus $s_{i_k}u>u$ and both recursions skip $k$. Induction proves the claim, including $v'_r=\overline v_r=v$.
\end{proof}

For $0\leq l\leq m$, set
\[
\overline v^{\,l}:=(i_{p_1},\ldots,i_{p_l}),\qquad
v^l:=s_{i_{p_1}}\cdots s_{i_{p_l}},
\qquad \overline v^{\,0}=\varnothing,\quad v^0=e.
\]
Thus $\overline v^{\,l}$ is a word indexed by the number of selected
positions, whereas $\overline v_k$ is the group element obtained by
truncating at the absolute word position $k$. In particular,
$\mathbf s(\overline v^{\,l},\overline w)$ denotes the seed at stage $l$;
the superscript here must not be replaced by a subscript.
For $0\leq k\leq r$, define
\[
\overline v_k^l:=\prod_{\substack{t\leq l\\p_t\leq k}}s_{i_{p_t}}.
\]
\begin{lemma}\label{lem:leftmost-prefix}
The positions $p_1,\ldots,p_l$ form the leftmost subexpression of $v^l$ in $\overline w$. In particular, for $l\geq1$,
\[
v^l\not\leq w_{p_l-1}.
\]
\end{lemma}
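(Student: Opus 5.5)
The argument has two parts: first the leftmost property of the prefix, then the non-inequality as a consequence of it.

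\emph{Leftmost property.} Note that $p_1,\ldots,p_l$ is a reduced subexpression for $v^l$, since it is a prefix of the reduced word $(i_{p_1},\ldots,i_{p_m})$. Suppose some reduced subexpression $q_1<\cdots<q_l$ of $v^l$ is lexicographically smaller than $(p_1,\ldots,p_l)$. Let $s$ be the first index with $q_s\neq p_s$, so $q_s<p_s$.

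Mimicking the proof of Lemma~\ref{lem:vk}, put $u:=\overline v_{p_{s-1}}^{-1}v=s_{i_{p_s}}\cdots s_{i_{p_m}}$, which is reduced. The positions $q_s,\ldots,q_l$ give a reduced expression of $s_{i_{p_s}}\cdots s_{i_{p_l}}$, a left prefix of $u$ in weak order. In particular, $s_{i_{q_s}}$ is a left descent of $u$, so $s_{i_{q_s}}u<u$.

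Now apply the subword property to the reduced subexpression $(p_s,\ldots,p_m)$ of $u$ inside the suffix of $\overline w$ strictly after $q_s$ (all $p_t\ge p_s>q_s$). This yields a reduced subexpression $\gamma$ of that suffix for $s_{i_{q_s}}u$. Concatenating $p_1,\ldots,p_{s-1}$, then $q_s$, then $\gamma$ gives a reduced subexpression of $v$. It is lexicographically smaller than $(p_1,\ldots,p_m)$, contradicting the choice of $\overline v$.

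A cleaner alternative is to use Lemma~\ref{lem:vk} directly. The leftmost subexpression is characterized as the greedy choice: select $k$ iff $s_{i_k}$ is a left descent of the remaining element. One then checks that the greedy choices for $v^l$ and for $v$ agree up to position $p_l$. Before $p_l$ the remaining elements are $x\cdot(\text{rest of }v^l)$ and $x\cdot(\text{rest of }v)$, the latter extending the former reduced-ly on the right, so a descent of the former is a descent of the latter. The converse (a descent of the latter is a descent of the former) is exactly the subword argument above, and I expect this to be the main obstacle. I would present the contradiction version to avoid it.

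\emph{The non-inequality.} Suppose $v^l\le w_{p_l-1}$. Then by the subword property, $v^l$ has a reduced subexpression inside $(i_1,\ldots,i_{p_l-1})$, i.e.\ with positions $q_1<\cdots<q_l\le p_l-1$. Its last position satisfies $q_l<p_l$, and so does every entry of $(p_1,\ldots,p_{l-1})$; comparing entrywise, the position sequence $(q_1,\ldots,q_l)$ is lexicographically strictly smaller than $(p_1,\ldots,p_l)$. Indeed, if the two sequences agree up to index $l-1$, they differ at the last entry, where $q_l<p_l$. This contradicts the leftmost property just proved, so $v^l\not\le w_{p_l-1}$.
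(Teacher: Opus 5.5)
Your proof of the leftmost property is correct. You apply the exchange step from the proof of Lemma~\ref{lem:vk} directly to the lexicographic definition, whereas the paper compares the greedy recursions for $v$ and $v^l$. That greedy route does not need the converse you worry about. A position skipped for $v$ is skipped for $v^l$ because every left descent of $a$ is one of $ac$. Each $p_{t+1}$ is selected for $v^l$ because the residual $s_{i_{p_{t+1}}}\cdots s_{i_{p_l}}$ visibly has that left descent.

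The gap is in the second part. From $q_l<p_l$ and $p_1,\ldots,p_{l-1}<p_l$ you cannot conclude $(q_1,\ldots,q_l)<_{\mathrm{lex}}(p_1,\ldots,p_l)$. The comparison is decided at the first index $s$ where the sequences differ, and your ``Indeed'' sentence treats only $s=l$. For $s\le l-1$ nothing you have said rules out $q_s>p_s$. For instance, the sequences $p=(1,5)$ and $q=(2,3)$ satisfy every inequality you invoke, yet $q>_{\mathrm{lex}}p$. The non-existence of such a $q$ is exactly what has to be proved, so this is a missing argument, not a wording issue.

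You can close it in either of two ways. The first follows the paper. The greedy recursion of Lemma~\ref{lem:vk} decides position $k$ from $i_1,\ldots,i_k$ and the target alone. Lemma~\ref{lem:vk} applied to $v^l\le w_{p_l-1}$ with the reduced word $(i_1,\ldots,i_{p_l-1})$ would then give $v'_{p_l-1}=v^l$. But Lemma~\ref{lem:vk} applied to $v^l\le w$, together with your first part, gives $v'_{p_l-1}=v^{l-1}$, a contradiction.

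The second normalizes $q$ with your own exchange step. Look at the first index $s\le l-1$ with $q_s\ne p_s$. If $q_s<p_s$, this already contradicts the first part. If $q_s>p_s$, then $s_{i_{p_s}}$ is a left descent of $(v^{s-1})^{-1}v^l$, whose reduced word sits at positions $q_s<\cdots<q_l$. Deleting one of those letters yields a reduced subexpression $(p_1,\ldots,p_s,\gamma)$ of $v^l$ whose positions are all still $<p_l$. Iterating reaches $(p_1,\ldots,p_{l-1},q'_l)$ with $q'_l<p_l$. This is lexicographically smaller than $(p_1,\ldots,p_l)$ and contradicts the first part.
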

\begin{proof}
Apply the greedy recursion of Lemma~\ref{lem:vk} with target $v^l$. After the first $t<l$ selected positions, its residual target is $a:=(v^t)^{-1}v^l$, whereas the residual target for $v$ is $ac:=(v^t)^{-1}v$, with $\ell(ac)=\ell(a)+\ell(c)$. Every left descent of $a$ is a left descent of $ac$: if $s_i a<a$, then $\ell(s_iac)\leq\ell(ac)-1$. Thus a position skipped for $v$ is also skipped for $v^l$ until $p_l$. Each position $p_{t+1}$, for $t<l$, is selected because the displayed reduced suffix for $a$ starts with $i_{p_{t+1}}$. After $p_l$ the residual target is $e$, so the recursion stops.

If $v^l\leq w_{p_l-1}$, Lemma~\ref{lem:vk} applied to that shorter reduced word would recover $v^l$ before position $p_l$. This contradicts the same greedy recursion just described.
\end{proof}

For $l\geq1$, one has $\overline v_k^{\,l-1}=\overline v_k^{\,l}$ if $k<p_l$, and
\[
\overline{v}^{\,l-1}_k s_{i_{p_l}} = \overline{v}^{\,l}_k
\]
for all $k \ge p_l$.

For each $j\in I$, we define
\begin{equation}
I_j:=\{(j,k)\mid 1\leq k\leq n_j\}\subseteq[r],
\end{equation}
the subset of $\overline{w}$ consisting of vertices of color $j$.

Here a colored pair is identified with its position in the original word. For $0\leq l\leq m$, set
\[
I_j^l:=I_j^{v^l}:=I_j\cap\{p_1,\ldots,p_l\},
\qquad I_j^v:=I_j^m.
\]
Fixing $j,l$, write $a:=\alpha(j,l)=|I_j^l|$ and enumerate
\[
I_j^l=\{(j,t_1),\ldots,(j,t_a)\},\qquad
t_1<\cdots<t_a.
\]
The numbers $t_s$ are occurrence numbers for this fixed color, not indices in the global sequence $(p_1,\ldots,p_l)$. Set $t_0:=0$ and $t_{a+1}:=n_j+1$, and define, for $0\leq s\leq a$,
\[
I_{j,s}^l:=\{(j,k)\in I_j\mid t_s\leq k<t_{s+1}\}.
\]
Then $I_j=\bigsqcup_{s=0}^a I_{j,s}^l$. In particular, if $a=0$, this convention gives $I_{j,0}^l=I_j$, including when $I_j=\varnothing$. The symbol $(j,t_0)=(j,0)$ is only a boundary symbol and is not a vertex.

If $i=i_{p_l}$, then $\alpha(i,l)=\alpha(i,l-1)+1$ and $I_i^l=I_i^{l-1}\sqcup\{p_l\}$. Thus the last interval at stage $l-1$ splits into two intervals:
\begin{equation}\label{eq:Iisl}
I^{l-1}_{i,s}=
\begin{cases}
I^l_{i,s}, & s<\alpha(i,l-1),\\[4pt]
I^l_{i,\alpha(i,l-1)}\cup I^l_{i,\alpha(i,l)}, & s=\alpha(i,l-1).
\end{cases}
\end{equation}
For $j\neq i$, the partitions are unchanged:
\[
I^{l-1}_{j,s}=I^l_{j,s}
\quad
\text{for }0\leq s\leq\alpha(j,l).
\]
For $T\subseteq I_j$ and $s\geq0$, define
\[
T[-s]:=\{(j,k)\in I_j\mid (j,k+s)\in T\}.
\]
For a fixed color $i$ and $0\leq s\leq\alpha(i,l)$, the endpoint conventions give
\[
\left(I^l_{i,s}\setminus\{(i,t_s)\}\right)[-s]
=\{(i,k)\in I_i\mid t_s-s+1\leq k\leq t_{s+1}-s-1\}.
\]
These intervals are disjoint and consecutive after empty intervals are omitted. Therefore
\begin{equation}\label{eq:partitionIi}
I_{i,\leq l}:=\{(i,k)\in I_i \mid k \leq n_i - \alpha(i,l)\}
=
\bigsqcup_{s=0}^{\alpha(i,l)}
\left(I^l_{i,s}\setminus\{(i,t_s)\}\right)[-s].
\end{equation}
Equivalently, enumerate the unselected occurrence numbers as
\[
\{u_{i,1}<\cdots<u_{i,n_i-\alpha(i,l)}\}
=[n_i]\setminus\{t_1,\ldots,t_{\alpha(i,l)}\}.
\]
The order-preserving bijection $\Phi_i^l(i,k):=(i,u_{i,k})$ is given explicitly by
\begin{equation}\label{eq:Phiil}
\begin{split}
         \Phi_{i}^l:I_{i,\leq l}&\to I_i\setminus I_i^l\\
            (i,k)&\mapsto (i,k+s),  \text{ if }(i,k)\in \left(I^l_{i,s}\setminus\{(i,t_s)\}\right)[-s]
\end{split}
\end{equation}
Indeed, precisely $s$ selected occurrences precede an unselected occurrence in $I_{i,s}^l$, so its rank among unselected occurrences is reduced by $s$. This also proves bijectivity. If $\alpha(i,l)=0$, then $\Phi_i^l$ is the identity; if all occurrences have been selected, it is the unique map between empty sets.

Gluing the maps for all colors yields the bijection
\[
\Phi^l :\, \bigsqcup_{i\in I} I_{i,\leq l} \to [r]\setminus\{p_1,\dots,p_l\}
\]
All comparisons between its image vertices use their positions in the original word $\overline w$.

Finally, suppose $i=i_{p_l}$. Since $t_{\alpha(i,l)}=d_l=b_l+\alpha(i,l)$, the newly affected interval is
\[
N_l:=\left(I^l_{i,\alpha(i,l)}\setminus\{p_l\}\right)[-\alpha(i,l)]
=\{(i,k)\in I_i\mid b_l+1\leq k\leq n_i-\alpha(i,l)\}.
\]
On $I_{j,\leq l}$, the maps $\Phi_j^{l-1}$ and $\Phi_j^l$ coincide outside $N_l$. On $N_l$ they satisfy
\[
\Phi_i^{l-1}(i,k)=(i,k+\alpha(i,l)-1),\qquad
\Phi_i^l(i,k)=(i,k+\alpha(i,l)).
\]

\begin{example}
Following Example \ref{exm:words}, let us consider the reduced expression
\[
\overline{w}=(1234123121)
\]
of $w_0$. Let 
\[
v=s_2s_3s_1s_2s_1<w_0 .
\]
The leftmost subexpression $\overline{v}$ of $v$ in $\overline{w}$ is
\[
(1,\underline{2},\underline{3},4,\underline{1},\underline{2},3,\underline{1},2,1),
\]
and hence
\[
(p_1,\dots,p_5)=(2,3,5,6,8).
\]

\begin{table}[h]
\centering
\begin{tabular}{c c c c c c c c c c c}
\hline
$/$ & $1$ & $2$ & $3$ & $4$ & $5$ & $6$ & $7$ & $8$ & $9$ & $10$ \\
\hline
$w$ & $w_1$ & $w_2$ & $w_3$ & $w_4$ & $w_5$ & $w_6$ & $w_7$ & $w_8$ & $w_9$ & $w_{10}$ \\
$\overline{v}$ & $e$ & $s_2$ & $s_2s_3$ & $s_2s_3$ & $s_2s_3s_1$ & $s_2s_3s_1s_2$ & $s_2s_3s_1s_2$ & $v$ & $v$ & $v$ \\
$\overline{v}^4$ & $e$ & $s_2$ & $s_2s_3$ & $s_2s_3$ & $s_2s_3s_1$ & $v^4$ & $v^4$ & $v^4$ & $v^4$ & $v^4$ \\
$\overline{v}^3$ & $e$ & $s_2$ & $s_2s_3$ & $s_2s_3$ & $v^3$ & $v^3$ & $v^3$ & $v^3$ & $v^3$ & $v^3$ \\
$\overline{v}^2$ & $e$ & $s_2$ & $v^2$ & $v^2$ & $v^2$ & $v^2$ & $v^2$ & $v^2$ & $v^2$ & $v^2$ \\
\hline
\end{tabular}
\caption{Examples of $(w_k,\overline{v}^l_k)$.}
\end{table}

Let $l=5=\ell(v)$ and $j_5=1$. Then
\[
I_1=\{(1,1),\fbox{(1,2)},\fbox{(1,3)},(1,4)\}=\{1,\fbox{5},\fbox{8},10\},
\]
and
\[
I_{1,0}^5=\{1\},\qquad
I_{1,1}^5=\{5\},\qquad
I_{1,2}^5=\{8,10\}.
\]
It follows that
\[
I^5_{1,2}[-2]=\{(1,1),(1,2)\}=\{1,5\}.
\]
The unselected color-$1$ occurrence numbers are $1$ and $4$. Hence
\[
I_{1,\leq5}=\{(1,1),(1,2)\}=\{1,5\},\qquad
\Phi_1^5(1,1)=(1,1),\quad\Phi_1^5(1,2)=(1,4).
\]
In the original position labels, the map is $1\mapsto1$ and $5\mapsto10$.
\end{example}

\section{Kac--Moody open Richardson varieties}\label{sec:kacmoody}

Let $I$ be a finite set and let $C=(c_{ij})_{i,j\in I}$ be a symmetric generalized Cartan matrix. Fix a simply connected integral Kac--Moody realization
\[
\mathcal{D} := (I, C, X, Y, \{\alpha_i\}_{i\in I}, \{\alpha_i^\vee\}_{i\in I}),
\]
where $X$ and $Y$ are free abelian groups of finite rank in perfect duality, the simple roots $\alpha_i\in X$ and simple coroots $\alpha_i^\vee\in Y$ are each linearly independent, and
\[
\langle \alpha_i, \alpha_j^\vee \rangle = c_{ij}.
\]

Choose fundamental weights $\varpi_i\in X$ satisfying
\[
\langle\varpi_i,\alpha_j^\vee\rangle=\delta_{ij}.
\]
When $C$ is singular, the realization includes the additional lattice directions needed to keep the simple roots and coroots independent; we do not identify $X$ or $Y$ with the span of the displayed weights or coroots. Set
\[
\begin{gathered}
Q:=\bigoplus_{i\in I}\ZZ\alpha_i,\qquad
Q^+:=\bigoplus_{i\in I}\ZZ_{\geq0}\alpha_i,\\
X^+:=\{\lambda\in X\mid\langle\lambda,\alpha_i^\vee\rangle\geq0\text{ for all }i\in I\}.
\end{gathered}
\]
We use the $W$-invariant symmetric form on the realization normalized by
$(\alpha_i,\alpha_j)=c_{ij}$ and
$(\lambda,\alpha_i)=\langle\lambda,\alpha_i^\vee\rangle$.

The Weyl group $W$ is generated by the simple reflections $s_i$ for $i \in I$, acting on $X$ via the bilinear pairing $\langle \cdot, \cdot \rangle$. We denote by $\ell(\cdot)$ the length function on $W$. The set of real roots is defined by
\[
\Delta_{\mathrm{re}} := \{ w(\alpha_i) \mid i \in I,\; w \in W \},
\]
with the disjoint decomposition
\[
\Delta_{\mathrm{re}} = \Delta_{\mathrm{re}}^+ \sqcup \Delta_{\mathrm{re}}^- ,
\qquad
\Delta_{\mathrm{re}}^+:=\Delta_{\mathrm{re}}\cap Q^+,\quad
\Delta_{\mathrm{re}}^-:=-\Delta_{\mathrm{re}}^+.
\]
For the full root system $\Delta$ (including imaginary roots), put
$\Delta^+:=\Delta\cap(Q^+\setminus\{0\})$ and
$\Delta^-:=-\Delta^+$. The inversion set $\Delta^+\cap w\Delta^-$
is finite and consists of real roots.

The \emph{minimal Kac--Moody group} $G$ associated with the root datum $\mathcal{D}$ over $\CC$ is the group generated by the torus
\[
T := Y \otimes_\ZZ \CC^*
\]
and the root subgroups $U_\alpha \cong \CC$ for each real root $\alpha \in \Delta_{\mathrm{re}}$, subject to the Tits relations. Let $U^+$ (resp.\ $U^-$) denote the subgroup of $G$ generated by the root subgroups $U_\alpha$ (resp.\ $U_{-\alpha}$) for $\alpha \in \Delta_{\mathrm{re}}^+$. Define the Borel subgroups
\[
B^+ := \langle T, U^+ \rangle, \qquad
B^- := \langle T, U^- \rangle .
\]

Fix a pinning with simple root homomorphisms $x_i,y_i$ and standard
representatives $\dot s_i=x_i(1)y_i(-1)x_i(1)$. These satisfy the braid
relations, so $\dot w:=\dot s_{i_1}\cdots\dot s_{i_\ell}$ is independent
of the reduced expression $w=s_{i_1}\cdots s_{i_\ell}$.

Let $\mathcal{B} := G/B^+$ be the flag variety. For $w, v \in W$, define the \emph{Schubert cell}
\[
\mathring{\mathcal{B}}_w := B^+ \dot{w} B^+/B^+
\]
and the \emph{opposite Schubert cell}
\[
\mathring{\mathcal{B}}^{\,v} := B^- \dot{v} B^+/B^+ .
\]
By the Bruhat decomposition and the Birkhoff decomposition, one has
\[
\mathcal{B}
= \bigsqcup_{w \in W} \mathring{\mathcal{B}}_w
= \bigsqcup_{v \in W} \mathring{\mathcal{B}}^{\,v}.
\]

Define the open Richardson cell by
\[
\mathring{\mathcal{B}}_{v,w} := \mathring{\mathcal{B}}_w \cap \mathring{\mathcal{B}}^{\,v}.
\]
It is well known that $\mathring{\mathcal{B}}_{v,w} \neq \varnothing$ if and only if $v \le w$ in the Bruhat order.

For $v\leq w$, choose a reduced expression $\overline w$ and its leftmost subexpression $\overline v$ as in Section~\ref{sec:leftmost}. Recall the seed $\mathbf s(\overline v,\overline w)$ from Definition~\ref{def:subwordmutation}.

\begin{theorem}[{\cite[Theorem~5.13]{bao2025upper}}]\label{thm:classical-upper-richardson}
    The coordinate ring of the open Richardson variety has the upper cluster algebra realization
    \[
    \CC[\mathring{\mathcal B}_{v,w}]\cong U(\overline v,\overline w).
    \]
    Here frozen variables are invertible in the upper cluster algebra, as in Section~\ref{sec:cluster}.
\end{theorem}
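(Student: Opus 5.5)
This theorem is quoted from \cite[Theorem~5.13]{bao2025upper}, so the plan is to reduce it to Bao--Ye's statement rather than reprove the geometry. The proof would match their conventions with those fixed here: reduced word $\overline w$, leftmost subexpression $\overline v$, and the mutation-then-deletion seed $\mathbf s(\overline v,\overline w)$ of Definition~\ref{def:subwordmutation}. Their theorem identifies $\CC[\mathring{\mathcal B}_{v,w}]$ with the upper cluster algebra of a seed produced by the same subword procedure, due to M\'enard. Any discrepancy is then a matter of bookkeeping.

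The steps, in order, are as follows.
\begin{enumerate}
\item Check that the word seed $\mathbf s(\overline w)$, with $B_{\overline w}$ as defined above, agrees with the initial seed for the Schubert cell $\mathring{\mathcal B}_w$ used by Bao--Ye. This holds up to a global reversal of arrows, which is harmless since $\mu_k(-B)=-\mu_k(B)$ and the exchange polynomials are unchanged.
\item Check that the mutation sequences $\widetilde\mu_k$ coincide with the sequence they apply for each selected position $p_k$. The same holds for the deletion sets $D_k$ and the freezing rule.
\item Verify that their subexpression is the leftmost one. Lemma~\ref{lem:vk} identifies $\overline v$ with Leclerc's greedy sequence $v'_k$, so whichever of the two descriptions their paper uses matches ours.
\end{enumerate}

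The upper cluster algebra is invariant under mutation, so it does not matter at which seed in the mutation class the comparison is made. Reversal of all arrows also does not affect $U$. With these identifications the isomorphism $\CC[\mathring{\mathcal B}_{v,w}]\cong U(\overline v,\overline w)$ is exactly their statement. Frozen variables are inverted on both sides, as in our definition of $U$, which intersects Laurent polynomial rings in all variables.

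The main obstacle is step~2. One must confirm that deleting the vertices $D_k$ only after all mutations in the full seed gives the same seed as any interleaved deletion used in the source. I would handle this by showing that the deleted vertices $(j,n_j-\ell+1)$ are never mutated afterwards, since each $\widetilde\mu_k$ stops at $(i,n_i-a_k)$. Consequently, mutation at a retained vertex commutes with deleting vertices that have already been isolated from the mutable part, and the freezing of their neighbors is the same in both procedures.
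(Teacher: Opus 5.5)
Your overall route is the paper's: the paper gives no argument for this statement beyond the citation of \cite[Theorem~5.13]{bao2025upper}. It arranges Definition~\ref{def:subwordmutation} and the global quiver convention of Section~\ref{sec:wordseed} so that $\mathbf s(\overline v,\overline w)$ is the Bao--Ye--M\'enard seed.

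\textbf{The gap.} The one piece of real argument you add, for what you call the main obstacle, does not work as written. You correctly note that the deleted vertices $(j,n_j-\ell+1)$ are never mutated afterwards, since $\widetilde\mu_{k'}$ stops at $(i,n_i-a_{k'})$. But this does not imply that they are ``already isolated from the mutable part.'' In the full seed they are not isolated. In Example~\ref{exm:words}, after $\widetilde\mu_1=\mu_6\mu_2$, the deleted vertex $9=(2,3)$ is joined by an arrow to $6=(2,2)$, which is still mutable in the full seed. Isolation holds only in the retained seed, after $6$ has been frozen.

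What governs the comparison is mutation at the neighbors of deleted vertices, not at the deleted vertices themselves. Suppose some later $\widetilde\mu_{k'}$ mutated at a vertex $u$ adjacent to some $d\in D_{k'-1}$. In the full seed, $\mu_u$ would then change $b_{dx}$ for the neighbors $x$ of $u$, and hence change the final freezing set. In an interleaved procedure, $u$ would already be frozen, so that mutation would not even be allowed.

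\textbf{What you need.} The required lemma is the following: for every $k'$, no vertex mutated by $\widetilde\mu_{k'}$ is adjacent, in the current full quiver, to a vertex of $D_{k'-1}$. Equivalently, no vertex frozen at an earlier stage is mutated later. Given this, each such $\mu_u$ leaves every entry $b_{dx}$ with $d$ deleted unchanged, because all correction terms contain $b_{du}=0$. Then the neighbor sets of deleted vertices, the freezing, and the retained exchange matrix agree in the two procedures.

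This lemma is a genuine statement about the evolving quiver. It holds in Example~\ref{exm:words}, where $6$ is never mutated again, but it does not follow from the index bound $n_i-a_{k'}$. You must either prove it by tracking the quiver along $\widetilde\mu_1,\ldots,\widetilde\mu_m$, or take it from the source. If the source, like Definition~\ref{def:subwordmutation}, performs all mutations in the full seed before deleting, the issue disappears and your proposal reduces to the paper's citation.
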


\section{Determinantial modules}\label{sec:quiverhecke}
\subsection{Quantum coordinate rings}
For the symmetric Cartan datum fixed above, let \(U_q(\mathfrak g)\)
be the quantum group generated by \(e_i,f_i,q^h\), with \(i\in I\)
and \(h\in Y\). Its subalgebra \(U_q(\mathfrak n)=U_q^+(\mathfrak g)\)
is generated by the \(e_i\).
Let \(\phi\) be the antiautomorphism interchanging \(e_i,f_i\)
and fixing \(q^h\).

For \(\lambda\in X^+\), let \(V(\lambda)\) be the integrable
irreducible highest weight module with highest weight vector
\(u_\lambda\). Use its restricted dual
\[
V(\lambda)^\vee
:=\bigoplus_{\nu\in X}\operatorname{Hom}_{\mathbb Q(q)}
       (V(\lambda)_\nu,\mathbb Q(q)).
\]
For \(f\in V(\lambda)^\vee\) and \(u\in V(\lambda)\), put
\(c_{f,u}(x)=f(xu)\).
The quantum coordinate ring \(A_q(\mathfrak g)\) is the span of
these integrable matrix coefficients inside the full linear dual
of \(U_q(\mathfrak g)\), with multiplication induced by the standard
coproduct. The Peter--Weyl decomposition gives
\[
\bigoplus_{\lambda\in X^+}V(\lambda)^\vee\otimes V(\lambda)
\xrightarrow{\sim} A_q(\mathfrak g),\qquad
f\otimes u\longmapsto c_{f,u};
\]
see \cite{kashiwara1993global} and
\cite[Section~8]{kang2018monoidal} for the bimodule conventions.
The restricted dual is essential here: in Kac--Moody type one
cannot replace these coefficients by those of finite-dimensional
representations.

Equip \(V(\lambda)\) with the symmetric contravariant form
\[
(u_\lambda,u_\lambda)_\lambda=1,\qquad
(xu,z)_\lambda=(u,\phi(x)z)_\lambda.
\]
For \(\mu\in W\lambda\), let \(u_\mu\) denote its normalized
extremal vector.

\subsubsection{Quantum minors}
For \(w,v\in W\) and \(\lambda\in X^+\), define
\[
\Delta\binom{w\lambda}{v\lambda}(x)
:=(u_{v\lambda},xu_{w\lambda})_\lambda
\qquad (x\in U_q(\mathfrak g)).
\]
This is the extremal-vector convention of
\cite[Section~9.1]{kang2018monoidal}; in particular, the minor
depends only on its two weights.

Let \(A_q(\mathfrak n)\) be the root-graded dual
\[
\bigoplus_{\gamma\in Q^+}
\operatorname{Hom}_{\mathbb Q(q)}
(U_q(\mathfrak n)_\gamma,\mathbb Q(q)),
\]
with multiplication dual to the usual twisted coproduct on
\(U_q(\mathfrak n)\). The restriction map
\[
p_{\mathfrak n}:A_q(\mathfrak g)\longrightarrow A_q(\mathfrak n),
\qquad p_{\mathfrak n}(f)(x)=f(x),
\]
defines the unipotent quantum minors
\[
D\binom{w\lambda}{v\lambda}
:=p_{\mathfrak n}\!\left(\Delta\binom{w\lambda}{v\lambda}\right).
\]

Set \(J_\lambda=\{j\in I:\langle\lambda,\alpha_j^\vee\rangle=0\}\).
We give \(W/W_{J_\lambda}\) its Bruhat order and write
\(w\lambda\preceq v\lambda\) when
\(vW_{J_\lambda}\leq wW_{J_\lambda}\). The precise criterion is
\cite[Lemma~9.1.4]{kang2018monoidal}
\begin{equation}\label{eq:Dneq0}
D\binom{w\lambda}{v\lambda}\ne0
\quad\Longleftrightarrow\quad
vW_{J_\lambda}\leq wW_{J_\lambda}
\quad\Longleftrightarrow\quad
w\lambda\preceq v\lambda.
\end{equation}
Thus \(v\leq w\) implies non-vanishing, but the converse for
arbitrary Weyl group representatives requires regular \(\lambda\).
We use \(D\binom{\mu}{\mu}=1\).

\begin{lemma}[{\cite[Lemmas~9.1.1 and~9.1.4]{kang2018monoidal}}]
\label{lem:minorcanonical}
Every nonzero \(D\binom{w\lambda}{v\lambda}\) belongs to the upper
global (dual canonical) basis of \(A_q(\mathfrak n)\).
\end{lemma}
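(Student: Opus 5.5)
The statement is cited directly from \cite[Lemmas~9.1.1 and~9.1.4]{kang2018monoidal}, so the plan is to recall the mechanism behind those lemmas rather than to build a new argument. The proof will pass through the global basis of \(A_q(\mathfrak g)\) and then use compatibility of the restriction \(p_{\mathfrak n}\) with upper global bases.

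First, I will observe that the extremal vector \(u_{w\lambda}\) lies in the upper (and lower) global basis of \(V(\lambda)\). This holds because extremal vectors are obtained from \(u_\lambda\) by divided powers \(f_i^{(n)}\) along a reduced expression. Each such step with \(n=\langle\mu,\alpha_i^\vee\rangle\) sends a global basis element at an \(i\)-highest weight to a global basis element, by the crystal characterization \(\tilde f_i^{\,n}\) together with the string property. Likewise \(u_{v\lambda}\), paired via the contravariant form, gives an element of the dual basis of \(V(\lambda)^\vee\).

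Next, I will invoke Kashiwara's theorem \cite{kashiwara1993global} that under Peter--Weyl the matrix coefficients \(c_{f,u}\), with \(f\) and \(u\) extremal global basis elements, belong to the upper global basis of \(A_q(\mathfrak g)\). Thus \(\Delta\binom{w\lambda}{v\lambda}\) is an upper global basis element of \(A_q(\mathfrak g)\).

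Finally, I will show that \(p_{\mathfrak n}\) maps the upper global basis of \(A_q(\mathfrak g)\) to the upper global basis of \(A_q(\mathfrak n)\) union \(\{0\}\). This is \cite[Lemma~9.1.1]{kang2018monoidal}, proved by comparing the crystal \(B(\infty)\) embedded in \(B(\lambda)\otimes T_{\mu}\), using that \(V(\lambda)\) is a quotient of \(U_q^-\). Since \(D\binom{w\lambda}{v\lambda}\) is assumed nonzero, it is a basis element. The nonvanishing criterion \eqref{eq:Dneq0} is not needed for this implication.

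The main obstacle is the compatibility of \(p_{\mathfrak n}\) with global bases. The delicate point is matching the crystal lattice and bar-involution conventions on the two sides, in particular in Kac--Moody type with restricted duals. I would not reprove this, but simply cite the reference for it, since the paper already adopts those conventions.
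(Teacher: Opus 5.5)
Your proposal is correct and follows the same route as the paper, which gives no independent proof and simply cites \cite[Lemmas~9.1.1 and~9.1.4]{kang2018monoidal}: the extremal matrix coefficient $\Delta\binom{w\lambda}{v\lambda}$ lies in the upper global basis of $A_q(\mathfrak g)$, and $p_{\mathfrak n}$ sends such elements to upper global basis elements of $A_q(\mathfrak n)$ or to $0$. You are also right that the nonvanishing criterion \eqref{eq:Dneq0} plays no role once $D\binom{w\lambda}{v\lambda}\neq0$ is assumed.
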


The following proposition plays an important role in this paper.

\begin{proposition}[{\cite[Proposition~9.2.1]{kang2018monoidal}}]\label{thm:Tsystem}
Let $w,v\in W$, assume $vs_i>v$ and $ws_i>w$, and set
$\lambda_i:=\varpi_i+s_i\varpi_i\in X^+$. Then
\[
q^A
D\binom{ws_i\varpi_i}{vs_i\varpi_i}
D\binom{w\varpi_i}{v\varpi_i}
=
q^{-1+B}
D\binom{ws_i\varpi_i}{v\varpi_i}
D\binom{w\varpi_i}{vs_i\varpi_i}
+
D\binom{w\lambda_i}{v\lambda_i},
\]
where
\[
A=\left(vs_i\varpi_i,\,v\varpi_i-w\varpi_i\right),
\qquad
B=\left(v\varpi_i,\,vs_i\varpi_i-w\varpi_i\right).
\]
\end{proposition}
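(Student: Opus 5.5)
The plan is to prove the identity first in $A_q(\mathfrak g)$ for the matrix coefficients $\Delta$, and then apply the algebra homomorphism $p_{\mathfrak n}$. The key observation is that $\lambda_i=\varpi_i+s_i\varpi_i=2\varpi_i-\alpha_i$. So the weights $2\varpi_i$ and $\lambda_i$ are exactly the highest weights of the two summands of the "rank-one part'' of $V(\varpi_i)\otimes V(\varpi_i)$.

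First, I will handle the case $v=w=e$. Since $\langle\varpi_i,\alpha_i^\vee\rangle=1$, the extremal vectors $u_{\varpi_i}$ and $u_{s_i\varpi_i}=f_iu_{\varpi_i}$ span a $U_q(\mathfrak{sl}_2)_i$-doublet. Inside $V(\varpi_i)\otimes V(\varpi_i)$, the weight space of weight $\lambda_i$ spanned by $u_{s_i\varpi_i}\otimes u_{\varpi_i}$ and $u_{\varpi_i}\otimes u_{s_i\varpi_i}$ contains two vectors:
\begin{itemize}
\item a $q$-symmetric vector $f_i(u_{\varpi_i}\otimes u_{\varpi_i})$, lying in the copy of $V(2\varpi_i)$;
\item a $q$-antisymmetric vector $z$, killed by all $e_j$, which generates a copy of $V(\lambda_i)$.
\end{itemize}
Writing $u_{s_i\varpi_i}\otimes u_{\varpi_i}$ in this basis and pairing with the contravariant form on the tensor product expresses the product $\Delta\binom{s_i\varpi_i}{s_i\varpi_i}\Delta\binom{\varpi_i}{\varpi_i}$ as a matrix coefficient on the tensor product. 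This gives a $q$-multiple of a mixed term $\Delta\binom{s_i\varpi_i}{\varpi_i}\Delta\binom{\varpi_i}{s_i\varpi_i}$ plus the coefficient of $z$, and the latter equals $\Delta\binom{\lambda_i}{\lambda_i}$. This is a purely $\mathfrak{sl}_2$ computation.

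Second, I will transport the identity to general $w,v$ with $ws_i>w$ and $vs_i>v$. The extremal vectors satisfy $u_{w\mu}=\overline{S}_w u_\mu$ for Saito's braid-group operators, where the operator for a reduced expression of $w$ is applied to every $\mu\in X^+$ simultaneously. The length conditions guarantee two things:
\begin{itemize}
\item $u_{ws_i\varpi_i}$ and $u_{w\varpi_i}$ form the doublet $\overline S_w\{u_{s_i\varpi_i},u_{\varpi_i}\}$ without sign twists;
\item $u_{w\lambda_i}$ is the image of $u_{\lambda_i}$, and likewise for $v$.
\end{itemize}
Rather than working with $\overline S_w$ on tensor products directly, I will use the multiplicativity of extremal minors (the identity $\Delta\binom{w\lambda}{v\lambda}\Delta\binom{w\mu}{v\mu}=q^{c}\Delta\binom{w(\lambda+\mu)}{v(\lambda+\mu)}$ for dominant $\lambda,\mu$, with an explicit exponent $c$). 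This lets me identify the $w$-, $v$-translated $\lambda_i$-component in $V(\varpi_i)^{\otimes 2}$ as the extremal vector $u_{w\lambda_i}$, up to a $q$-power. Pairing with $u_{vs_i\varpi_i}\otimes u_{v\varpi_i}$ on the left then yields the displayed relation in $A_q(\mathfrak g)$. Applying $p_{\mathfrak n}$, which is multiplicative for the twisted coproduct conventions of \cite{kang2018monoidal}, gives the statement for the $D$'s.

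The main obstacle is the exact bookkeeping of the $q$-exponents $A$ and $B$ and of the $-1$. These come from three sources:
\begin{itemize}
\item the normalization of the $q$-antisymmetric vector $z$ and its norm under $(\ ,\ )_\lambda$;
\item the exponents in the multiplicativity of extremal minors, which are pairings of the shape $(v\mu,\,v\nu-w\nu)$;
\item the twist in the coproduct used for $A_q(\mathfrak n)$.
\end{itemize}
I would first fix all conventions in the rank-one case and check the formula there, where $A$ and $B$ reduce to small explicit pairings. I would then verify that each translated term picks up exactly the pairing predicted by $A=(vs_i\varpi_i,v\varpi_i-w\varpi_i)$ and $B=(v\varpi_i,vs_i\varpi_i-w\varpi_i)$. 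A useful consistency check is bar-invariance: all three terms are dual canonical basis elements by Lemma~\ref{lem:minorcanonical}, which pins down the exponents uniquely once the identity is known up to $q$-powers.
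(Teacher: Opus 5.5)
The paper gives no argument of its own here; the identity is imported from \cite[Proposition~9.2.1]{kang2018monoidal}. Your architecture is the standard one: an identity of matrix coefficients in $A_q(\mathfrak g)$, coming from the two summands $V(2\varpi_i)$ and $V(\lambda_i)$ of $V(\varpi_i)^{\otimes 2}$ at weight $\lambda_i$, followed by $p_{\mathfrak n}$. Your rank-one step works once it is phrased correctly. The product $\Delta\binom{s_i\varpi_i}{\varpi_i}\Delta\binom{\varpi_i}{s_i\varpi_i}$ is the coefficient of the \emph{same} vector $u_{s_i\varpi_i}\otimes u_{\varpi_i}$ against the swapped pair of functionals. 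The factor $q^{-1}$ is the scalar for which the corresponding difference of functionals vanishes on the $V(2\varpi_i)$-line.

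The genuine gap is the passage to general $(w,v)$. Multiplicativity of extremal minors only concerns the Cartan component $V(\lambda+\mu)\subset V(\lambda)\otimes V(\mu)$ and pure tensors $u_{w\lambda}\otimes u_{w\mu}$ with $\lambda,\mu$ dominant. It says nothing about the weight-$w\lambda_i$ space of $V(\varpi_i)^{\otimes2}$, which involves the non-dominant weight $s_i\varpi_i$; the identity to be proved is precisely the failure of naive multiplicativity for the pair $\varpi_i$, $s_i\varpi_i$. Let $\iota\colon V(\lambda_i)\hookrightarrow V(\varpi_i)^{\otimes2}$ be the embedding. Knowing $\iota(u_{w\lambda_i})$ only up to a power of $q$ discards exactly what matters. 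The content of the identity is the exact ratio of the two coefficients of $\iota(u_{w\lambda_i})$, and of the $V(2\varpi_i)$-vector, in the basis $u_{ws_i\varpi_i}\otimes u_{w\varpi_i}$, $u_{w\varpi_i}\otimes u_{ws_i\varpi_i}$. You need the same information at weight $v\lambda_i$ as well. Pairing against the single tensor $u_{vs_i\varpi_i}\otimes u_{v\varpi_i}$ gives only the left-hand product.

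The missing ingredient is an exact transport along a reduced word. Suppose $u_\mu,u_{\mu'}$ are extremal with $n=\langle\mu,\alpha_j^\vee\rangle\geq0$ and $n'=\langle\mu',\alpha_j^\vee\rangle\geq0$. In the coproduct of $f_j^{(n+n')}$, only the term with $f_j^{(n')}$ on the first factor and $f_j^{(n)}$ on the second survives on $u_{\mu'}\otimes u_\mu$. Its power of $q$ cancels against the Cartan factor, so $f_j^{(n+n')}(u_{\mu'}\otimes u_\mu)=u_{s_j\mu'}\otimes u_{s_j\mu}$. Take $\mu=y\varpi_i$ and $\mu'=ys_i\varpi_i$ with $y$ a right factor of the reduced word of $w$. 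Reducedness gives $n\geq0$, and the hypothesis $ws_i>w$ is what guarantees $n'\geq0$ at every step. These operators preserve both summands. Hence the $V(2\varpi_i)$- and $V(\lambda_i)$-lines at weight $w\lambda_i$, and likewise at $v\lambda_i$ using $vs_i>v$, have the same coefficients as for $w=e$. One gets
\[
\Delta\binom{ws_i\varpi_i}{vs_i\varpi_i}\Delta\binom{w\varpi_i}{v\varpi_i}
=q^{-1}\Delta\binom{ws_i\varpi_i}{v\varpi_i}\Delta\binom{w\varpi_i}{vs_i\varpi_i}
+\Delta\binom{w\lambda_i}{v\lambda_i}
\]
in $A_q(\mathfrak g)$, with no $A$ or $B$.

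Those exponents arise solely because $p_{\mathfrak n}$ is \emph{not} multiplicative, contrary to your statement. For the minors occurring here, $p_{\mathfrak n}\bigl(\Delta\binom{\mu_1}{\zeta_1}\Delta\binom{\mu_2}{\zeta_2}\bigr)=q^{(\zeta_1,\zeta_2-\mu_2)}D\binom{\mu_1}{\zeta_1}D\binom{\mu_2}{\zeta_2}$. This gives exactly $A$ and $B$, and also the exponent $\kappa_i$ in \eqref{eq:neighbor-minor-normalization}. Bar-invariance cannot replace this bookkeeping, because the two left-hand factors need not $q$-commute; in the situation of Proposition~\ref{pro:Tlkcommute} they have $\mathfrak d=1$.
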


For clarity, the neighboring-color term can be expanded with its
precise power of $q$. Fix an ordering $j_1,\ldots,j_N$ of the multiset
in which $j\ne i$ occurs $-c_{ji}$ times, and put
\[
\kappa_i(w,v)
:=\sum_{a<b}(v\varpi_{j_a},\,v\varpi_{j_b}-w\varpi_{j_b}).
\]
Since $\lambda_i-\sum_{a=1}^N\varpi_{j_a}$ is $W$-invariant,
\cite[Corollary~9.1.3]{kang2018monoidal} gives
\begin{equation}\label{eq:neighbor-minor-normalization}
D\binom{w\lambda_i}{v\lambda_i}
=q^{\kappa_i(w,v)}
 D\binom{w\varpi_{j_1}}{v\varpi_{j_1}}
 \cdots
 D\binom{w\varpi_{j_N}}{v\varpi_{j_N}}.
\end{equation}
The empty product is $1$. All products below use this fixed order;
the unexpanded $\lambda_i$-minor is independent of that choice.

\medskip

We remark that our notation for quantum minors differs from that in \cite{geiss2013cluster}. 
One advantage of our notation is that the above identity can be interpreted in terms of a $2\times 2$ matrix:
\[
\begin{pmatrix}
ws_i\varpi_i & w\varpi_i\\
vs_i\varpi_i & v\varpi_i
\end{pmatrix}.
\]
The left-hand side corresponds to the product of two columns,
while the first term on the right-hand side corresponds to the product of the
off-diagonal and diagonal entries. This product can be represented
diagrammatically as follows:
\[
\begin{tikzcd}
ws_i\varpi_i \ar[dr,dash] & w\varpi_i \ar[dl,dash] \\
vs_i\varpi_i & v\varpi_i
\end{tikzcd}
\]
together with the additional column terms
\[
\begin{pmatrix}
w\varpi_j\\
v\varpi_j
\end{pmatrix}
\quad (j\ne i).
\]

Recall the notations in Section \ref{sec:leftmost}.
We now describe a structural property of the quantum minors associated with the
sequence $\overline{v}^l$. More precisely, we study the relation between the
quantum minors
\[
D\binom{w_p\varpi_{i_p}}{\overline{v}_p^{\,l}\varpi_{i_p}}
\quad\text{and}\quad
D\binom{w_q\varpi_{i_q}}{\overline{v}_q^{\,l-1}\varpi_{i_q}}.
\]
\begin{theorem}\label{thm:Dpllaurent}
For $l\in [m]$, the quantum minor
\[
D\binom{w_{p_l}\varpi_{i_{p_l}}}
{\overline{v}^{\,l}_{p_l}\varpi_{i_{p_l}}}
\]
is, up to a power of $q$, a Laurent monomial in
\[
\left\{
D\binom{w_k\varpi_{i_k}}
{\overline{v}^{\,l-1}_k\varpi_{i_k}}
\mid k<p_l\right\} .
\]
\end{theorem}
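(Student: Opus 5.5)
The plan is to obtain the statement from a single application of the $T$-system (Proposition~\ref{thm:Tsystem}) in which one of the two terms on the right-hand side vanishes. Put $p:=p_l$, $i:=i_p$, $w:=w_{p-1}$ and $u:=\overline v^{\,l-1}_{p-1}$. Position $p$ is selected, and $p_{l-1}<p$, so
\[
w_p=ws_i,\qquad \overline v^{\,l}_p=us_i=v^l,\qquad u=v^{l-1}.
\]
Because $\overline w$ is reduced we have $ws_i>w$. Because $(i_{p_1},\dots,i_{p_l})$ is reduced we have $us_i>u$. Hence Proposition~\ref{thm:Tsystem} applies to the pair $(w,u)$ and gives
\[
q^A D\binom{ws_i\varpi_i}{us_i\varpi_i}D\binom{w\varpi_i}{u\varpi_i}
=q^{-1+B}D\binom{ws_i\varpi_i}{u\varpi_i}D\binom{w\varpi_i}{us_i\varpi_i}+D\binom{w\lambda_i}{u\lambda_i}.
\]
The first factor on the left is exactly the minor in the statement.

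The key step is to show that $D\binom{w\varpi_i}{us_i\varpi_i}=0$. By \eqref{eq:Dneq0}, with $J=I\setminus\{i\}$, this amounts to showing $us_iW_J\not\le wW_J$. Lemma~\ref{lem:leftmost-prefix} gives $v^l=us_i\not\le w_{p-1}$. To pass from this element-wise statement to the coset statement, I would argue as follows.
\begin{enumerate}
\item Suppose $x\le w$ for some $x\in us_iW_J$. Let $y$ be the minimal representative of the coset $us_iW_J$; then $y\le x\le w$.
\item It then suffices to check that $us_i$ is the minimal representative, i.e.\ that $s_i$ is its only right descent.
\end{enumerate}
If this fails, I would instead rerun the greedy recursion of Lemma~\ref{lem:vk} directly on the coset level. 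The aim is to show that a reduced subexpression of $w_{p-1}$ representing $us_iW_J$ could be extended to give a lexicographically smaller leftmost subexpression for $v^l$, contradicting Lemma~\ref{lem:leftmost-prefix}. A lifting-property argument on $s_i$ should also handle this. I expect this vanishing step to be the main obstacle.

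Granting the vanishing, the identity becomes
\[
q^AD\binom{w_p\varpi_i}{\overline v^{\,l}_p\varpi_i}D\binom{w\varpi_i}{u\varpi_i}=q^{\kappa_i(w,u)}\prod_a D\binom{w\varpi_{j_a}}{u\varpi_{j_a}},
\]
where the right-hand side was expanded using \eqref{eq:neighbor-minor-normalization}. Each remaining minor now has to be rewritten in the required form. For a color $j$, let $k=(p-1)(j)^-$ be the last occurrence of $j$ before $p$ (for $j=i$ this is $p^-$).
\begin{enumerate}
\item Letters between $k$ and $p-1$ have color $\ne j$ and fix $\varpi_j$, so $w\varpi_j=w_k\varpi_j$.
\item For the same reason $u\varpi_j=\overline v^{\,l-1}_k\varpi_j$: the selected letters between $k$ and $p-1$ also fix $\varpi_j$.
\end{enumerate}
Hence $D\binom{w\varpi_j}{u\varpi_j}=D\binom{w_k\varpi_{i_k}}{\overline v^{\,l-1}_k\varpi_{i_k}}$ with $k<p_l$. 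If $k=-\infty$, the minor is $D\binom{\varpi_j}{\varpi_j}=1$.

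The minor $D\binom{w_{p^-}\varpi_i}{\overline v^{\,l-1}_{p^-}\varpi_i}$ is nonzero, since $\overline v^{\,l-1}_{p^-}\le w_{p^-}$ and \eqref{eq:Dneq0} applies. Since $A_q(\mathfrak n)$ is a domain, we may divide by it in its skew field of fractions. This exhibits the target minor as $q^{\kappa_i-A}$ times
\[
\prod_a D\binom{w_{k_a}\varpi_{j_a}}{\overline v^{\,l-1}_{k_a}\varpi_{j_a}}\cdot D\binom{w_{p^-}\varpi_i}{\overline v^{\,l-1}_{p^-}\varpi_i}^{-1},
\]
which is the claimed Laurent monomial up to a power of $q$.
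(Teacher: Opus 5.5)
Your overall structure matches the paper's proof, and those parts are correct: apply Proposition~\ref{thm:Tsystem} to $(w_{p_l-1},v^{l-1},i)$, kill the first right-hand term, expand the neighbouring-colour term by \eqref{eq:neighbor-minor-normalization}, identify each remaining minor with the minor at the last occurrence $k<p_l$ of its colour, and divide by the nonzero minor at $p_l^-$.

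\textbf{The gap.} The problem is the vanishing $D\binom{w_{p-1}\varpi_i}{v^l\varpi_i}=0$, as you feared, and your primary route to it is false. The element $v^l=v^{l-1}s_i$ need not be the minimal representative of $v^lW_{I\setminus\{i\}}$.

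\emph{Counterexample.} Take colours $i,j$ with $c_{ij}=0$ (e.g.\ $1,3$ in type $A_3$), $\overline w=(j,i)$ and $v=w$. Then $l=2$, $p_2=2$, and $v^2=s_js_i=s_is_j$ has the right descent $s_j\in W_{I\setminus\{i\}}$. So the minimal representative is $s_i\neq v^2$.

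In general, Lemma~\ref{lem:leftmost-prefix} only gives $v^l\nleq w_{p-1}$. The criterion \eqref{eq:Dneq0} needs $(v^l)^{I\setminus\{i\}}\nleq w_{p-1}$ for the minimal representative $(v^l)^{I\setminus\{i\}}\le v^l$, which is a stronger statement than the one you have. Your fallbacks (a coset-level greedy recursion, a lifting-property argument) are not carried out. It is also unclear how they would work: a subexpression at the coset level forgets the $W_{I\setminus\{i\}}$-part of $v^l$, which is exactly what you need in order to contradict leftmostness.

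\textbf{The missing idea.} Use all maximal parabolic quotients at once. For $j\neq i$ one has $s_i\in W_{I\setminus\{j\}}$, so
\[
\pi_j(v^l)=\pi_j(v^{l-1})\le\pi_j(w_{p-1})
\]
holds automatically, because $v^{l-1}\le w_{p-1}$ by the subword property. Bruhat order on $W$ is detected by the projections $\pi_j\colon W\to W/W_{I\setminus\{j\}}$ for all $j\in I$; this is Deodhar's criterion, and the paper cites \cite{stembridge2005tight} for it. Hence $\pi_i(v^l)\le\pi_i(w_{p-1})$ would force $v^l\le w_{p-1}$, contradicting Lemma~\ref{lem:leftmost-prefix}. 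So $\pi_i(v^l)\nleq\pi_i(w_{p-1})$. Since $W_{I\setminus\{i\}}$ stabilizes $\varpi_i$, \eqref{eq:Dneq0} then gives the vanishing.

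With this step supplied, the rest of your argument goes through as written.
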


\begin{proof}
Set \(i=i_{p_l}\), \(a=v^{l-1}\), \(b=v^l=as_i\), and
\(u=w_{p_l-1}\). Then \(a\leq u\), \(u<us_i=w_{p_l}\), and
\(a<b\). Lemma~\ref{lem:leftmost-prefix} gives \(b\nleq u\).

We first prove the needed vanishing in the appropriate weight orbit.
For \(j\in I\), write \(\pi_j:W\to W/W_{I\setminus\{j\}}\).
If \(j\ne i\), then \(s_i\in W_{I\setminus\{j\}}\), and hence
\[
\pi_j(b)=\pi_j(a)\leq\pi_j(u).
\]
Bruhat order is detected by these maximal-parabolic projections
\cite[Theorem~1.3 and Corollary~1.4]{stembridge2005tight}.
Consequently \(b\nleq u\) forces
\(\pi_i(b)\nleq\pi_i(u)\). Since
\(W_{I\setminus\{i\}}\) stabilizes \(\varpi_i\), the criterion
\eqref{eq:Dneq0} now gives
\[
D\binom{u\varpi_i}{b\varpi_i}=0.
\]

Apply Proposition~\ref{thm:Tsystem} to \(u,a,i\). Its first
right-hand term vanishes, so that
\begin{equation}\label{eq:TsystemD}
q^{(b\varpi_i,a\varpi_i-u\varpi_i)}
D\binom{us_i\varpi_i}{b\varpi_i}
D\binom{u\varpi_i}{a\varpi_i}
=
D\binom{u\lambda_i}{a\lambda_i}.
\end{equation}
By \eqref{eq:neighbor-minor-normalization}, the last minor is a
power of \(q\) times an ordered monomial in
\(D\binom{u\varpi_j}{a\varpi_j}\), \(j\ne i\).

For each \(j\), let \(h_j<p_l\) be the last position of color \(j\)
before \(p_l\), if it exists. No letter of color \(j\) occurs
between \(h_j\) and \(p_l-1\), in either the full word or its selected
subword. Therefore
\[
u\varpi_j=w_{h_j}\varpi_j,\qquad
a\varpi_j=\overline v^{\,l-1}_{h_j}\varpi_j.
\]
If \(h_j\) does not exist, both weights equal \(\varpi_j\), and the
corresponding minor is \(1\). All minors in
\eqref{eq:TsystemD} other than its first factor are thus among the
stated earlier minors, or equal to \(1\). They are nonzero because
\(a\leq u\). Solving for the first factor in the skew field of
fractions proves the assertion, with the indicated power of \(q\).
\end{proof}

\begin{proposition}\label{pro:Tsystemmk}
Fix \(l\in[m]\), write \(p_l=(i,d_l)\), and take
\((i,k)\in I^l_{i,\alpha(i,l)}\setminus\{p_l\}\).
Let \(p\) be the absolute word position of \((i,k)\), and put
\(u=w_{p-1}\), \(a=v^{l-1}\), and \(\lambda_i=\varpi_i+s_i\varpi_i\).
Then
\begin{equation}\label{eq:Tsystemid}
\begin{aligned}
q^{A_{l,k}}
D\binom{w_{(i,k)}\varpi_i}{v^l\varpi_i}
D\binom{w_{(i,k-1)}\varpi_i}{v^{l-1}\varpi_i}
&=
q^{-1+B_{l,k}}
D\binom{w_{(i,k)}\varpi_i}{v^{l-1}\varpi_i}
D\binom{w_{(i,k-1)}\varpi_i}{v^l\varpi_i}\\
&\quad+
D\binom{w_{p-1}\lambda_i}{v^{l-1}\lambda_i},
\end{aligned}
\end{equation}
where
\[
A_{l,k}=(v^l\varpi_i,v^{l-1}\varpi_i-u\varpi_i),
\qquad
B_{l,k}=(v^{l-1}\varpi_i,v^l\varpi_i-u\varpi_i).
\]
The last term equals
\[
q^{\kappa_i(w_{p-1},v^{l-1})}
\prod_{a=1}^N
D\binom{w_{p-1}\varpi_{j_a}}{v^{l-1}\varpi_{j_a}}
\]
in the ordering of \eqref{eq:neighbor-minor-normalization}.
\end{proposition}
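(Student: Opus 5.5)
This is a direct specialization of Proposition~\ref{thm:Tsystem}. I will take $w:=u=w_{p-1}$, $v:=a=v^{l-1}$ and color $i$, and check that the resulting minors coincide with those in \eqref{eq:Tsystemid}.

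\emph{Matching the minors.} Since $(i,k)$ has color $i$ and $\overline w$ is reduced, $w_{(i,k)}=w_p=us_i$ with $us_i>u$. No letter of color $i$ lies strictly between $(i,k-1)$ and $p$, so $u\varpi_i=w_{(i,k-1)}\varpi_i$. If $k=1$, this reads $u\varpi_i=\varpi_i$, and we use the convention $w_{(i,0)}=e$. Since $p_l=(i,d_l)$ is the last selected position and $v^l=v^{l-1}s_i$, we have $as_i=v^l$.

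\emph{Checking $as_i>a$.} The word $\overline v^{\,l}$ is a reduced expression of $v^l$ ending in $i$, so $\ell(v^{l-1}s_i)=\ell(v^{l-1})+1$.

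With these identifications, the four minors of Proposition~\ref{thm:Tsystem} become exactly
\[
D\binom{w_{(i,k)}\varpi_i}{v^l\varpi_i},\quad D\binom{w_{(i,k-1)}\varpi_i}{v^{l-1}\varpi_i},\quad D\binom{w_{(i,k)}\varpi_i}{v^{l-1}\varpi_i},\quad D\binom{w_{(i,k-1)}\varpi_i}{v^l\varpi_i}.
\]
The exponents are $A=(as_i\varpi_i,a\varpi_i-u\varpi_i)=A_{l,k}$ and $B=(a\varpi_i,as_i\varpi_i-u\varpi_i)=B_{l,k}$. The remaining term is $D\binom{u\lambda_i}{a\lambda_i}=D\binom{w_{p-1}\lambda_i}{v^{l-1}\lambda_i}$, and \eqref{eq:neighbor-minor-normalization} with $(w,v)=(w_{p-1},v^{l-1})$ gives the stated expansion with the factor $q^{\kappa_i(w_{p-1},v^{l-1})}$.

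\emph{Expected obstacle.} The only care needed is the bookkeeping of the $k=1$ boundary. The hypothesis $(i,k)\ne p_l$ with $(i,k)\in I^l_{i,\alpha(i,l)}$ forces $k>d_l\ge1$, so this boundary does not actually occur, and $p_l\le (i,k-1)$. No Bruhat-order comparison between $a$ and $u$ is required, because Proposition~\ref{thm:Tsystem} only assumes $vs_i>v$ and $ws_i>w$. Non-vanishing of the minors is therefore not needed for the identity itself.
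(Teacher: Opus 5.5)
Your proposal is correct and matches the paper's proof: specialize Proposition~\ref{thm:Tsystem} at $(w,v)=(w_{p-1},v^{l-1})$, use that no letter of color $i$ lies strictly between $(i,k-1)$ and $p$ to get $w_{p-1}\varpi_i=w_{(i,k-1)}\varpi_i$ and $w_p=w_{p-1}s_i$, and expand the $\lambda_i$-minor by \eqref{eq:neighbor-minor-normalization}. The only difference is that you explicitly check the length hypotheses $us_i>u$ and $v^{l-1}s_i>v^{l-1}$, and the inequality $k>d_l$ that rules out the boundary case, all of which the paper leaves implicit.
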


\begin{proof}
The word contains no letter of color \(i\) between the preceding
occurrence \((i,k-1)\) and position \(p\). Thus
\[
w_{p-1}\varpi_i=w_{(i,k-1)}\varpi_i,
\qquad w_p=w_{p-1}s_i.
\]
Moreover, both occurrences are at or after \(p_l\), so their lower
endpoints at stages \(l-1,l\) are respectively \(v^{l-1}\) and \(v^l\).
Proposition~\ref{thm:Tsystem}, applied to \(u=w_{p-1}\) and \(a=v^{l-1}\),
gives \eqref{eq:Tsystemid}. The neighboring-color expansion follows
from \eqref{eq:neighbor-minor-normalization}. Notice that its prefix
is \(w_{p-1}\); it cannot generally be replaced by \(w_{(i,k-1)}\).
\end{proof}

\begin{example}[The adjacent-occurrence identity in type \(A_2\)]
\label{ex:Tsystem-A2}
Take \(\overline w=(1,2,1)\), \(v=s_1\), and \(l=1\). The next
occurrence of color \(1\) has absolute position \(p=3\), so the
neighboring-color term uses \(w_2=s_1s_2\). At \(q=1\) in the
deformation parameter, write
\[
n=\begin{pmatrix}1&x&z\\0&1&y\\0&0&1\end{pmatrix}.
\]
With the matrix-coefficient convention above, the four
\(\varpi_1\)-minors in \eqref{eq:Tsystemid} specialize to
\(y,x,z,1\), respectively, while
\[
D\binom{s_1s_2\varpi_2}{\varpi_2}(n)
=\det\begin{pmatrix}x&z\\1&y\end{pmatrix}=xy-z.
\]
Thus the identity is \(yx=z+(xy-z)\).
This example also distinguishes the absolute predecessor \(p-1=2\)
from the previous occurrence of color \(1\), which is position \(1\).
\end{example}

We may interpret Proposition~\ref{pro:Tsystemmk} pictorially as follows:
\begin{center}
\begingroup\scriptsize
\begin{equation}\label{dia:vl}
\begin{tikzcd}[column sep=2.2em,row sep=1.8em]
v^{l-1} 
& (i,d) \arrow[dr, draw=red, -] 
& (i,d+1) \arrow[dr, draw=red, -] 
& \cdots 
& (i,d+k-1) \arrow[dr, draw=red, -] 
& (i,d+k) \arrow[dr, draw=red, -] 
& \cdots \\
v^{l} 
& (i,d) \arrow[ur, draw=blue, -] 
& (i,d+1) \arrow[ur, draw=blue, -] 
& \cdots 
& (i,d+k-1) \arrow[ur, draw=blue, -] 
& (i,d+k) \arrow[ur, draw=blue, -] 
& \cdots
\end{tikzcd}
\end{equation}
\endgroup
\end{center}
The pair of terms connected by the red lines corresponds to the left-hand side of~\eqref{eq:Tsystemid}. 
The pair of terms connected by the blue lines corresponds to the first term on the right-hand side of~\eqref{eq:Tsystemid}.

\subsubsection{Cluster structures on quantum unipotent coordinate rings}
Let \(\overline w=(i_1,\ldots,i_r)\) be a reduced expression of \(w\), and set
\[
E_k^{\overline w}:=T_{i_1}\cdots T_{i_{k-1}}(e_{i_k})
\qquad(k\in[r]),
\]
where the \(T_i\) are Lusztig's braid group automorphisms
\cite{lusztig2010introduction}. The quantum nilpotent algebra
\(U_q(\mathfrak n(w))\) is generated by these root vectors.
We use \(A_q(\mathfrak n(w))\) for its quantum unipotent coordinate
realization inside \(A_q(\mathfrak n)\), with the standard multiplication
and upper global basis; it is not the unrestricted linear dual of
\(U_q(\mathfrak n(w))\).

Put \(\mathbb A=\mathbb Z[q^{\pm1}]\) and retain
\(\KK=\mathbb Z[q^{\pm1/2}]\) from Section~\ref{sec:cluster}.
The \(\mathbb A\)-form \(A_q(\mathfrak n)_{\mathbb A}\) is the dual
of the divided-power integral form, and
\[
A_q(\mathfrak n(w))_{\mathbb A}
:=A_q(\mathfrak n(w))\cap A_q(\mathfrak n)_{\mathbb A},
\qquad
A_q(\mathfrak n(w))_{\KK}
:=\KK\otimes_{\mathbb A}A_q(\mathfrak n(w))_{\mathbb A}.
\]
Set
\[
D_k:=D\binom{w_k\varpi_{i_k}}{\varpi_{i_k}},
\qquad
\beta_k:=\varpi_{i_k}-w_k\varpi_{i_k},
\qquad
\widetilde D_k:=q^{-(\beta_k,\beta_k)/4}D_k.
\]
The quantum seed associated with \(\overline w\) is
\begin{equation}\label{eq:normalized-word-seed}
\mathbf s_q(\overline w)
:=
\bigl((\widetilde D_k)_{k\in[r]},\Lambda_{\overline w},
       B_{\overline w},J_{\mathrm{uf}}\bigr).
\end{equation}
Here $J_{\mathrm{uf}}$ consists of positions which are not the final
occurrence of their color, as in Section~\ref{sec:wordseed}.
The normalization is that of
\cite[Theorem~11.1.4]{kang2018monoidal}. We distinguish it from the
classical word seed only when the coefficient parameter matters.

\begin{theorem}[{\cite{geiss2013cluster,kang2018monoidal}}]
The assignment of initial variables in
\eqref{eq:normalized-word-seed} extends to an isomorphism
\[
\overline{\mathcal A}_q(\mathbf s_q(\overline w))
\xrightarrow{\sim}A_q(\mathfrak n(w))_{\KK}.
\]
Here the bar denotes the quantum cluster algebra with non-inverted
frozen variables. Inverting the frozen minors gives the corresponding
localized isomorphism. Both assertions are over \(\KK\); extending
scalars to \(\mathbb Q(q^{1/2})\) gives their field-coefficient versions.
\end{theorem}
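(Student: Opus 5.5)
This theorem is cited from Geiss--Leclerc--Schröer and Kang--Kashiwara--Kim--Oh rather than proved from scratch, so the plan is to reduce it to their results and check that the conventions here match theirs.

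First, I would identify the initial data. The normalized minors $\widetilde D_k=q^{-(\beta_k,\beta_k)/4}D\binom{w_k\varpi_{i_k}}{\varpi_{i_k}}$ must be exactly the initial cluster variables of the quantum seed of \cite[Theorem~11.1.4]{kang2018monoidal}. The quasi-commutation matrix of these minors must equal $\Lambda_{\overline w}$ as defined in Section~\ref{sec:wordseed}. I would check this with the standard $q$-commutation formula for unipotent minors $D\binom{w_k\varpi_{i_k}}{\varpi_{i_k}}$ and $D\binom{w_l\varpi_{i_l}}{\varpi_{i_l}}$ with $k>l$. That formula gives the exponent $(w_k\varpi_{i_k}+\varpi_{i_k},\,w_l\varpi_{i_l}-\varpi_{i_l})$, which is $\lambda_{kl}$. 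For the exchange matrix, I would use the arrow convention fixed in Section~\ref{sec:wordseed}, which already agrees with \cite{kang2018monoidal}. Under that convention $B_{\overline w}$ is literally the matrix used there.

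Second, I would invoke the one-step exchange relations. For each mutable $k$, the T-system identity of Proposition~\ref{thm:Tsystem} with $v=e$, expanded by \eqref{eq:neighbor-minor-normalization}, is the exchange relation of the initial seed. It shows that each mutated variable $\mu_k(\widetilde D_k)$ lies in $A_q(\mathfrak n(w))_{\KK}$ and again belongs to the upper global basis. The membership is the part needed here. The basis property follows because each minor is in the upper global basis by Lemma~\ref{lem:minorcanonical}, which is exactly the input of the GLS/KKKO argument.

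Third, I would quote the main theorem of \cite{geiss2013cluster}, or of \cite{kang2018monoidal} in the symmetric Kac--Moody generality used here, in two parts. The first part gives the isomorphism $\overline{\mathcal A}_q(\mathbf s_q(\overline w))\cong A_q(\mathfrak n(w))$ over $\mathbb Q(q^{1/2})$ with non-inverted frozen variables. The second part is the $\KK$-integral refinement: KKKO prove that $\KK\otimes A_q(\mathfrak n(w))_{\mathbb A}$ equals the $\KK$-span of the cluster algebra, via the categorification by $\mathscr C_w$. In that categorification, cluster monomials correspond to real simple modules, so they lie in the integral form. Conversely, the upper global basis of $A_q(\mathfrak n(w))_{\mathbb A}$ is contained in the $\KK$-cluster algebra through the Grothendieck ring identification $K(\mathscr C_w)\cong A_q(\mathfrak n(w))_{\mathbb A}$.

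The localized statement then follows by inverting the frozen minors on both sides, since these minors are central up to powers of $q$ and form an Ore set. Extending scalars to $\mathbb Q(q^{1/2})$ gives the field-coefficient versions. The main obstacle is the convention bookkeeping, namely matching the sign of $\Lambda$ and the $q^{1/2}$-normalization with KKKO's. To handle this, I would use the remark in Section~\ref{sec:wordseed} that $(\Lambda,B)\mapsto(-\Lambda,-B)$ preserves compatibility and commutes with mutation. I would also compare bar-invariance of $\widetilde D_k$ with the normalization in \cite[Theorem~11.1.4]{kang2018monoidal}.
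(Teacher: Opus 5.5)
Your Step~3 is how the paper handles this result: the theorem is quoted from \cite{geiss2013cluster,kang2018monoidal} without proof. Your convention checks in Step~1 are consistent with the normalization fixed in Section~\ref{sec:wordseed}.

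Your Step~2, however, is false as stated and should be removed. Put $i=i_k$ and take $w=w_{k^+-1}$, the natural choice making $D\binom{w\varpi_i}{\varpi_i}=D_k$ and $ws_i$ a prefix. Then Proposition~\ref{thm:Tsystem} with $v=e$ reads, up to powers of $q$,
\[
D\binom{w_{k^+}\varpi_i}{s_i\varpi_i}\,D_k
= D_{k^+}\,D\binom{w_k\varpi_i}{s_i\varpi_i}
+ D\binom{w_{k^+-1}\lambda_i}{\lambda_i}.
\]
The factor $D\binom{w_k\varpi_i}{s_i\varpi_i}$ is in general not an initial cluster monomial. So this identity is not the exchange relation at $k$, and $D\binom{w_{k^+}\varpi_i}{s_i\varpi_i}$ is not $\mu_k(D_k)$.

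For a concrete case, take type $A_3$ with $\overline w=(1,2,1,3,2,1)$ and $k=3$, in the coordinates of Example~\ref{ex:A3-factorization}. The initial variables are $(a,\,ad-b,\,b,\,h,\,be-cd,\,c)$. The exchange relation at $3$ is $b\,x_3'=(ad-b)c+a(be-cd)=b(ae-c)$, so $x_3'=ae-c$. The displayed identity instead reads $e\cdot b=c\cdot d+(be-cd)$. Here $d$ has weight $\alpha_2$, so it is not a monomial in the initial variables, each of which has positive $\alpha_1$-coefficient.

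For the same reason, the upper-global-basis property of $\mu_k(\widetilde D_k)$ cannot be drawn from Lemma~\ref{lem:minorcanonical}, which concerns minors only. None of this is load-bearing, though. Membership of all cluster monomials in $A_q(\mathfrak n(w))_{\KK}$, and their global-basis property, are part of the theorem you quote in Step~3. With Step~2 replaced by that citation, the proposal is correct and coincides with the paper's treatment.
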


\subsection{Quiver Hecke algebras}

In this section, we briefly recall the definition of quiver Hecke algebras and the notion of determinantial modules.

\subsubsection{Definition of quiver Hecke algebras}

Let $Q=(I,Q_1)$ be a quiver whose associated Cartan matrix $C$ is symmetric. 
For $i,j\in I$, let $m_{ij}$ denote the number of arrows from $i$ to $j$. 
We define a polynomial $q_{i,j}(u,v)\in \CC[u,v]$ by
\[
q_{i,j}(u,v)=
\begin{cases}
0, & \text{if } i=j,\\
(v-u)^{m_{ij}}(u-v)^{m_{ji}}, & \text{if } i\neq j.
\end{cases}
\]

Let
\[
\alpha=\sum_{i\in I}n_i\,\alpha_i\in Q^+
\qquad\text{with}\qquad
n=|\alpha|=\sum_{i\in I}n_i.
\] We write $\langle I\rangle_\alpha$ for the set of words $\mathbf{i}=(i_1,\dots,i_n)$ of weight $\alpha$, where $n=|\alpha|$.

\begin{definition}

The \emph{quiver Hecke algebra} $R(\alpha)$ is the associative $\CC$-algebra generated by
\[
\{1_{\mathbf{i}}\}_{\mathbf{i}\in \langle I\rangle_\alpha}
\cup \{x_1,\dots,x_n\}
\cup \{\tau_1,\dots,\tau_{n-1}\},
\]
with the standard quiver Hecke relations associated with the
polynomials $q_{i,j}$; see \cite[Section~2.1]{kang2018monoidal}.
In particular, the $1_{\mathbf i}$ are mutually orthogonal idempotents
whose sum is $1_\alpha$, the $x_k$ commute, and
$\tau_k^2 1_{\mathbf i}=q_{i_k,i_{k+1}}(x_k,x_{k+1})1_{\mathbf i}$.
\end{definition}

The algebra $R(\alpha)$ carries a natural $\ZZ$-grading given by
\[
\deg(1_{\mathbf{i}})=0,
\qquad
\deg(x_j)=2,
\qquad
\deg(\tau_k1_{\mathbf{i}})
=-(\alpha_{i_k},\alpha_{i_{k+1}}),
\]
where $(\cdot,\cdot)$ is the symmetric bilinear form associated with the Cartan matrix $C$.

For $\alpha,\beta\in Q^+$, let
$e(\alpha,\beta):=\sum_{\mathbf i\in\langle I\rangle_\alpha,
\,\mathbf j\in\langle I\rangle_\beta}1_{\mathbf i\mathbf j}$
be the concatenation idempotent in $R(\alpha+\beta)$.

Let $R(\alpha)\text{-}\gmod$ denote the category of finite-dimensional graded $R(\alpha)$-modules. For
\[
M\in R(\alpha)\text{-}\gmod,
\qquad
N\in R(\beta)\text{-}\gmod,
\]
their convolution product is defined by
\[
M\circ N
:=
R(\alpha+\beta)e(\alpha,\beta)\otimes_{R(\alpha)\otimes R(\beta)}(M\boxtimes N).
\]

For $M\in R(\beta)\text{-}\gmod$, its graded dual is
\[
M^*:=\Hom_\CC(M,\CC),\qquad
(r\cdot f)(u):=f(\psi(r)u)
\qquad
(r\in R(\beta),\ u\in M),
\]
where $\psi$ is the algebra anti-involution fixing all the generators.

A simple $R(\beta)$-module $M$ is called \emph{self-dual} if $M^*\cong M$.

Now set
\[
R\text{-}\gmod
:=
\bigoplus_{\alpha\in Q^+}R(\alpha)\text{-}\gmod,
\]
and denote its graded Grothendieck ring by $K_0(R\text{-}\gmod)$,
an $\mathbb A$-algebra with $[qM]=q[M]$.
For a graded monoidal subcategory $\mathscr C$, we write
\[
K(\mathscr C):=\KK\otimes_{\mathbb A}K_0(\mathscr C),
\qquad
K_{\mathrm{cl}}(\mathscr C)
:=\mathbb C\otimes_{\KK,\,q^{1/2}\mapsto1}K(\mathscr C).
\]
The latter is the complexified ungraded Grothendieck ring.
For a module $M\in R(\alpha)\text{-}\gmod$, we use the root degree
\[
\wt(M)=\alpha.
\]

\begin{theorem}[{\cite[Theorems~2.1.2 and~2.1.4]{kang2018monoidal}}]\label{thm:isoK0Phi}
There is an $\mathbb A$-algebra isomorphism
\[
\Phi:\,K_0(R\text{-}\gmod)\xrightarrow{\sim} A_q(\mathfrak n)_{\mathbb A}
\]
under which the classes of self-dual simple modules correspond to the
upper global basis. We use the same symbol for its scalar extension
to $\KK$. The categorification isomorphism and the identification of
simple modules with the global basis are separate assertions; the
latter uses symmetry and characteristic zero.
\end{theorem}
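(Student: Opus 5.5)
The plan is to derive the statement from the Khovanov--Lauda categorification of $U_q^-(\mathfrak g)$ by duality, and then to identify the simple modules using the geometric realization of quiver Hecke algebras. First, consider the category $R\text{-}\mathrm{proj}$ of finitely generated graded projective modules. Induction $\circ$ and restriction along the idempotents $e(\alpha,\beta)$ make $K_0(R\text{-}\mathrm{proj})$ a twisted bialgebra over $\mathbb A$. The Mackey filtration of $\operatorname{Res}\circ\operatorname{Ind}$ gives the twisted compatibility of these two structures. Khovanov--Lauda show that $[R(\alpha)1_{\mathbf i}]\mapsto f_{i_1}\cdots f_{i_n}$, with divided powers coming from the idempotent projectives $P(i^{(n)})$, defines an isomorphism onto Lusztig's integral form $U_q^-(\mathfrak g)_{\mathbb A}$. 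This uses the basis theorem for $R(\alpha)$ (faithfulness of the polynomial representation) together with the graded-rank formula $\dim_q 1_{\mathbf j}R(\alpha)1_{\mathbf i}$. That formula matches Lusztig's form $(f_{\mathbf i},f_{\mathbf j})$, so the relations in $K_0$ are exactly the quantum Serre relations.

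Second, pass to finite-dimensional modules. The graded pairing
\[
\langle [P],[M]\rangle=\dim_q\operatorname{Hom}_{R(\alpha)}(P,M)
\]
is perfect between $K_0(R(\alpha)\text{-}\mathrm{proj})$ and $K_0(R(\alpha)\text{-}\gmod)$. The two dual bases are the indecomposable projective covers and the simple modules, since every simple is absolutely irreducible and there are finitely many up to shift. By adjunction, convolution on $\gmod$ is dual to restriction on projectives, and restriction on $\gmod$ is dual to induction on projectives. Transporting through the first step therefore identifies $K_0(R\text{-}\gmod)$, with its convolution product, with the graded dual of $U_q^-(\mathfrak g)_{\mathbb A}$. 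Using $\phi$ to exchange $f_i$ and $e_i$ and the twisted coproduct, this dual is the $\mathbb A$-form $A_q(\mathfrak n)_{\mathbb A}$. This gives the $\mathbb A$-algebra isomorphism $\Phi$. The assignment $q\mapsto q$ matches grading shifts, and extending scalars to $\KK$ is formal.

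Third, for the basis statement I would invoke the Varagnolo--Vasserot / Rouquier theorem. In symmetric type over $\CC$, $R(\alpha)$ is the Ext-algebra of the pushforward of the constant sheaf along Lusztig's quiver flag varieties to $E_\alpha$. By the decomposition theorem, the self-dual indecomposable projectives then correspond to the simple perverse sheaves in Lusztig's category, that is, to the lower canonical basis. Dualizing, the self-dual simples $L\cong L^*$, which are dual to self-dual projective covers with respect to the bar-invariant pairing, go to the dual canonical basis, which is Kashiwara's upper global basis. The main obstacle is this geometric step. The decomposition theorem and purity require characteristic zero and a genuine quiver, hence symmetry. One must also check carefully that the duality $M\mapsto M^*$ defined via $\psi$ matches Verdier duality and the bar involution on $A_q(\mathfrak n)$, so that "self-dual" corresponds to "bar-invariant" without an extra power of $q$. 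For this normalization check I would compare the two sides on the one-dimensional modules $L(i)$ and propagate the comparison by convolution, since the pairing is multiplicative.
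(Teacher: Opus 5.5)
Your proposal is correct and is essentially the argument behind the result the paper quotes. The paper gives no proof of its own but cites \cite[Theorems~2.1.2 and~2.1.4]{kang2018monoidal}. Those results rest on the Khovanov--Lauda--Rouquier categorification of $U_q^-(\mathfrak g)_{\mathbb A}$ by projectives, dualized to finite-dimensional modules, and on the Varagnolo--Vasserot/Rouquier identification of self-dual indecomposable projectives with the canonical basis in symmetric type over a field of characteristic zero.

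The one step you state too quickly is that convolution on $\gmod$ is dual to restriction on projectives ``by adjunction''. Ordinary Frobenius reciprocity gives the other pairing. You need that induction is also right adjoint to restriction, up to a grading shift and a swap of the two factors (equivalently, the quantum shuffle formula for characters). These shifts must then be matched with the twisted coproduct that defines the product on $A_q(\mathfrak n)$.
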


In particular, by Lemma~\ref{lem:minorcanonical}, for each nonzero quantum minor
\[
D\binom{w\lambda}{v\lambda},
\]
there exists a corresponding self-dual simple module, which we denote by
\[
M\binom{w\lambda}{v\lambda}.
\]
We call these modules \emph{determinantial modules}. They are real
\cite[Lemma~10.2.2]{kang2018monoidal}; equal endpoints give the unit
module. A module denotes its class only inside a Grothendieck-ring
formula, where square brackets will be used. Isomorphisms involving
unnormalized convolution products are understood up to integral
grading shift unless a shift is displayed. The normalized quantum
variable attached to a self-dual simple $M$ is
$q^{-(\mathrm{wt}(M),\mathrm{wt}(M))/4}\Phi([M])$.

\subsubsection{Determinantial modules and commutation}

We say that two simple modules $M$ and $N$ \emph{strongly commute} if 
$M\circ N$ is simple. 
A simple module $M$ is called \emph{real} if $M\circ M$ is simple.

For $M\in R(\alpha)\text{-}\operatorname{gmod}$ and 
$N\in R(\beta)\text{-}\operatorname{gmod}$, 
Kang–Kashiwara–Kim~\cite{kang2018symmetric} constructed an intertwining 
$R(\alpha+\beta)$-module homomorphism
\[
\mathbf{r}_{M,N}: M\circ N \longrightarrow N\circ M,
\]
called the \emph{$R$-matrix}.

We define
\[
\Lambda(M,N):=\deg(\mathbf{r}_{M,N}),
\qquad
\mathfrak{d}(M,N)
:=\frac{1}{2}\big(\Lambda(M,N)+\Lambda(N,M)\big).
\]

\begin{lemma}[Additivity and inheritance of commutation]
\label{lem:d-additivity}
Let $L$ be a real simple module, and let $N_1,\ldots,N_t$ be
pairwise strongly commuting real simple modules. Then
\[
\mathfrak d(L,N_1\circ\cdots\circ N_t)
=\sum_{a=1}^t\mathfrak d(L,N_a).
\]
In particular, $L$ strongly commutes with their product if and only
if it strongly commutes with each factor. More generally, if a
simple module $P$ is a convolution factor of a simple module $N$,
then every simple module strongly commuting with $N$ also strongly
commutes with $P$.
\end{lemma}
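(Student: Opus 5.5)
We prove the three assertions in turn, using only the standard properties of R-matrices and of the invariants $\Lambda$ and $\mathfrak d$ from \cite{kang2018symmetric} and \cite{kang2018monoidal}.

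\emph{Additivity.} Because the $N_a$ pairwise strongly commute, the convolution $N:=N_1\circ\cdots\circ N_t$ is simple and real. For a real simple $L$ and arbitrary simple modules $X,Y$, the R-matrix $\mathbf r_{L,X\circ Y}$ factors as $(X\circ\mathbf r_{L,Y})\circ(\mathbf r_{L,X}\circ Y)$, and likewise on the other side. The key input is that this composite is nonzero, which is where reality of $L$ is used. Granting that, degrees add:
\[
\Lambda(L,X\circ Y)=\Lambda(L,X)+\Lambda(L,Y),\qquad \Lambda(X\circ Y,L)=\Lambda(X,L)+\Lambda(Y,L).
\]
We apply this inductively with $X=N_1\circ\cdots\circ N_{a-1}$, which is simple by strong commutation, and $Y=N_a$. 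Averaging the two identities gives the formula for $\mathfrak d$.

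\emph{Equivalence.} For real simple $L$ and simple $M$, $L$ strongly commutes with $M$ if and only if $\mathfrak d(L,M)=0$ (Kang--Kashiwara--Kim--Oh). Since each $\mathfrak d(L,N_a)\ge 0$, the sum vanishes if and only if every term vanishes. This proves the ``in particular'' statement.

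\emph{Inheritance.} Here I read ``$P$ is a convolution factor of $N$'' as $N\simeq P\circ P'$ with $P,P'$ strongly commuting real simple modules, up to a grading shift. Any real simple $L$ strongly commuting with $N$ then strongly commutes with $P$ by the equivalence just proved. If $L$ is only assumed simple, not real, I would argue via the hom-space instead. Strong commutation of $L$ with $N$ makes $\mathbf r_{L,N}$ an isomorphism. Composing with the factorization $\mathbf r_{L,P\circ P'}=(P\circ\mathbf r_{L,P'})(\mathbf r_{L,P}\circ P')$, the map $\mathbf r_{L,P}\circ P'$ must be injective, so $\mathbf r_{L,P}$ is injective. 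Hence $L\circ P\simeq P\circ L$ has a simple head equal to its socle. Then $L\circ P$ is simple by the criterion ``the image of $\mathbf r$ is simple and $\mathbf r$ is an isomorphism implies simplicity.''

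\emph{Main obstacle.} The delicate point is the nonvanishing of composite R-matrices, which is needed both for additivity of $\Lambda$ and for the inheritance argument when $L$ is not real. I would cite \cite[Lemma 3.2.3, Prop.~3.2.10]{kang2018symmetric} for it rather than reprove it.
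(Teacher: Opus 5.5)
Your additivity argument and the ``in particular'' equivalence follow the paper. The paper cites \cite[Lemma~3.1.5(ii)]{kang2018monoidal} for additivity of both $R$-matrix degrees, which packages the same factorization-with-non-vanishing-composite mechanism you describe. It cites \cite[Lemma~3.2.3]{kang2018monoidal} for ``$\mathfrak d=0$ iff strong commutation'' when one of the modules is real.

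The gap is in the last assertion. It is stated for an arbitrary simple $L$ and an arbitrary simple convolution factor $P$ of $N$, i.e.\ $N\simeq P\circ Q$ up to shift. Nothing there is assumed real.

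\begin{itemize}
\item You strengthen the hypothesis to ``$P,P'$ strongly commuting real simples,'' and your $R$-matrix route genuinely needs it. Your final step says an isomorphism $\mathbf r_{L,P}$ makes $L\circ P$ simple. That rests on $\mathrm{Im}\,\mathbf r_{L,P}$ being the simple head of $L\circ P$, which is only known when $L$ or $P$ is real. So the case where neither $L$ nor $P$ is real is not covered at all.
\item For non-real $L$, you justify the factorization $\mathbf r_{L,P\circ P'}=(P\circ\mathbf r_{L,P'})(\mathbf r_{L,P}\circ P')$ by the non-vanishing that you yourself attributed to reality of $L$, which is unavailable there. That detour is also unnecessary. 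The map $\mathbf r_{L,P}\circ P'$ is nonzero out of the simple module $L\circ P\circ P'\simeq L\circ N$ (up to shift), so it is injective outright.
\end{itemize}

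The missing idea is that no $R$-matrices are needed. Suppose $S$ were a nonzero proper submodule of $L\circ P$. Since $-\circ Q$ is exact and convolutions of nonzero modules are nonzero (shuffle formula), $S\circ Q$ would be a nonzero proper submodule of $L\circ P\circ Q\simeq L\circ N$. This contradicts the simplicity of $L\circ N$. That is the paper's proof, and it works for every simple $L$, every simple $P$, and every nonzero $Q$, with no reality or commutation hypotheses.
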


\begin{proof}
The additivity of both $R$-matrix degrees follows from
\cite[Lemma~3.1.5(ii)]{kang2018monoidal}. Each summand is nonnegative,
and, when one factor is real, its vanishing is equivalent to strong
commutation \cite[Lemma~3.2.3]{kang2018monoidal}.
For the last assertion write $N\simeq P\circ Q$, up to shift.
If $L\circ P$ had a proper nonzero submodule, its convolution with
the nonzero module $Q$ would remain proper and nonzero, because
convolution is exact and sends nonzero modules to nonzero modules
by the KLR shuffle formula. This would contradict simplicity of
$L\circ N$. Thus $L\circ P$ is simple.
\end{proof}

The following proposition provides a family of strongly commuting pairs of determinantial modules.

\begin{proposition}[{\cite[Proposition~10.2.3]{kang2018monoidal}}]\label{pro:commutsstt}
Let $\lambda,\mu\in X^+$ be dominant weights. 
Let $s,s',t,t'$ be Weyl group elements such that
\[
\ell(s's)=\ell(s')+\ell(s),
\qquad
\ell(t't)=\ell(t')+\ell(t),
\]
\[
s's\lambda \preceq t'\lambda,
\qquad
s'\mu \preceq t't\mu.
\]
Then the simple modules
\[
M\binom{s's\lambda}{t'\lambda}
\quad\text{and}\quad
M\binom{s'\mu}{t't\mu}
\]
strongly commute.
\end{proposition}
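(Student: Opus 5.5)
The plan is to work in the quantum unipotent coordinate ring $A_q(\mathfrak n)$ through the isomorphism of Theorem~\ref{thm:isoK0Phi}. There the statement becomes a $q$-commutation identity between two unipotent quantum minors. Put
\[
D_1:=D\binom{s's\lambda}{t'\lambda},\qquad
D_2:=D\binom{s'\mu}{t't\mu}.
\]
Both are nonzero by \eqref{eq:Dneq0}. The hypotheses $s's\lambda\preceq t'\lambda$ and $s'\mu\preceq t't\mu$ give this directly. By Lemma~\ref{lem:minorcanonical}, both minors belong to the upper global basis. The corresponding determinantial modules are real, as recalled above.

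The reduction I would use is the following criterion. If $M,N$ are real simple modules whose classes $q$-commute, i.e.\ $[M][N]=q^{c}[N][M]$ in $K_0(R\text{-}\gmod)$, then $M\circ N$ is simple. To justify it, recall that $\mathbf r_{M,N}\colon M\circ N\to N\circ M$ has image equal to the simple head of $M\circ N$ and to the simple socle of $N\circ M$. The $q$-commutation gives $[M\circ N]=q^{c}[N\circ M]$. Comparing composition factors, and using that the head occurs with multiplicity one in $M\circ N$ when one factor is real, forces $M\circ N$ to have length one. If this criterion cannot be quoted from the $R$-matrix theory of \cite{kang2018monoidal}, I would reprove it via $\mathfrak d(M,N)=0$ using \cite[Lemma~3.2.3]{kang2018monoidal}. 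So everything reduces to showing $D_1D_2=q^{c}D_2D_1$ with
\[
c=(s's\lambda,\,s'\mu)-(t'\lambda,\,t't\mu).
\]

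For the $q$-commutation I would lift to $A_q(\mathfrak g)$ and compute with matrix coefficients. We have $\Delta_1(x)=(u_{t'\lambda},x\,u_{s's\lambda})_\lambda$ and $\Delta_2(x)=(u_{t't\mu},x\,u_{s'\mu})_\mu$. Their products in the two orders are matrix coefficients on $V(\lambda)\otimes V(\mu)$ and on $V(\mu)\otimes V(\lambda)$. These are related by the universal $R$-matrix $\mathcal R=q^{-(\mathrm{wt},\mathrm{wt})}\sum_\nu \Theta_\nu$, with $\Theta_\nu\in U_q^-{}_{-\nu}\otimes U_q^+{}_{\nu}$ (or the opposite convention). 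The key steps are the following.
\begin{enumerate}
\item Show that the quasi-$R$-matrix $\Theta$ acts trivially, apart from its $\nu=0$ term, on both the ket vector $u_{s's\lambda}\otimes u_{s'\mu}$ and the bra vector $u_{t'\lambda}\otimes u_{t't\mu}$.
\item To do this, use the braid group operators to write $u_{s's\lambda}=\dot{s'}\,u_{s\lambda}$ and $u_{s'\mu}=\dot{s'}\,u_\mu$. Since $\ell(s's)=\ell(s')+\ell(s)$, the vector $u_{s\lambda}$ is annihilated by the $e$'s corresponding to the inversion roots of $s'^{-1}$. Since $u_\mu$ is a highest weight vector, it is annihilated by all $e_i$.
\item Conjugating $\Theta$ by $T_{s'}$ factors it as $\Theta=\Theta^{(s')}\cdot T_{s'}(\Theta)$. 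One factor dies on the first tensor slot and the other on the second. The bra side is handled in the same way using $t'$ and $\ell(t't)=\ell(t')+\ell(t)$.
\item Only the Cartan part survives, and it contributes exactly $q^{c}$. Finally, apply the restriction map $p_{\mathfrak n}$.
\end{enumerate}

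The main obstacle is step (3): the factorization of $\Theta$ compatible with $T_{s'}$ in Kac--Moody generality, and checking that each factor annihilates the correct tensor factor. The dominance relations $\preceq$ enter only to guarantee that the minors are nonzero; the length-additivity conditions are what drive the vanishing. I would first try the standard factorization $\Theta=\Theta_{w}\,T_w(\Theta)$, where $\Theta_w\in U_q^-(w)\otimes U_q^+(w)$ is built from the root vectors $E_k^{\overline w}$, and verify the annihilation using the extremal-vector characterization of $u_{s\lambda}$. A secondary point to check is the sign and normalization of $c$ under the twisted coproduct on $A_q(\mathfrak n)$. This only affects the exponent, not the $q$-commutation itself.
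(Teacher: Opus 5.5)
The paper does not prove this statement; it quotes it from \cite{kang2018monoidal}. Your outline is the standard route to it: first show the two minors $q$-commute, then use that real simple modules with $q$-commuting classes strongly commute. The strategy is right, but three steps as you wrote them need repair.

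\emph{The simplicity criterion.} Comparing composition factors without gradings cannot force $M\circ N$ to be simple. Consider a uniserial $M\circ N$ with head $M\nabla N$ and socle $N\nabla M$, together with a uniserial $N\circ M$ with head $N\nabla M$ and socle $M\nabla N$. This configuration is consistent with everything you invoke: equal ungraded classes, and each head occurring once. What is missing is the bridge from $q$-commutation to $\mathfrak d(M,N)=0$, and it needs graded multiplicities together with $R$-matrix degrees. The module $M\nabla N$ occurs exactly once in $M\circ N$, and $\mathbf r_{M,N}$ places it in $N\circ M$ shifted by $\Lambda(M,N)$. So $[M\circ N]=q^{c}[N\circ M]$ determines $c$ in terms of $\Lambda(M,N)$. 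The same argument for $N\nabla M$, which occurs once in $N\circ M$, determines $c$ in terms of $\Lambda(N,M)$. Comparing the two gives $\Lambda(M,N)+\Lambda(N,M)=0$, and only then does \cite[Lemma~3.2.3]{kang2018monoidal} yield simplicity.

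\emph{The exponent.} Your $c=(s's\lambda,s'\mu)-(t'\lambda,t't\mu)$ is the exponent for the $\Delta$'s in $A_q(\mathfrak g)$. The map $p_{\mathfrak n}$ is multiplicative only up to a weight-dependent power of $q$. For the unipotent minors the exponent becomes $(s's\lambda+t'\lambda,\,s'\mu-t't\mu)$, matching $\Lambda_{\overline w}$. For example, take $\mathfrak{sl}_2$ with $s'=t'=t=e$, $s=s_1$ and $\lambda=\mu=\varpi$. Then $D\binom{s_1\varpi}{\varpi}$ and $D\binom{\varpi}{\varpi}=1$ commute, although your $c=-1$. This is harmless for the argument, since the criterion only needs some exponent.

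\emph{The factorization of $\Theta$.} The formula $\Theta=\Theta_w\,T_w(\Theta)$ is false as written. For $\mathfrak{sl}_2$ and $w=s_1$ one has $\Theta=\Theta_{s_1}$, while $(T_1\otimes T_1)(\Theta)\neq1$. The second factor must instead be the $T_w$-transport of the part of $\Theta$ supported on the subalgebras complementary to $U_q^{\pm}(w)$. After transport, its $U^+$-component lies in $U^+\cap T_w(U^+)$, so it kills $u_{w\mu}=T_wu_\mu$. The positive-weight part of $U_q^-(w)$ kills $u_{ws\lambda}$ by the length additivity $\ell(ws)=\ell(w)+\ell(s)$. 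With this correction your annihilation argument goes through. The bra side is the mirror image, with the two tensor slots exchanged and $\ell(t't)=\ell(t')+\ell(t)$ used in place of $\ell(s's)=\ell(s')+\ell(s)$.
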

Following \cite{kashiwara2018monoidal}, we have the following result.

\begin{proposition}[{\cite[Theorem~4.10]{kashiwara2018monoidal}}]\label{pro:commutewv}
For any $l\in [0,\ell(v)]$, the indexed family of determinantial modules
\[
\mathscr{T}_l:=\left\{
M\binom{w_p\varpi_{i_p}}{\overline{v}^l_p\varpi_{i_p}}
\right\}_{p\in [r]}
\]
mutually strongly commute.
\end{proposition}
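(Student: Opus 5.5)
We reduce the statement to Proposition~\ref{pro:commutsstt}, treating a single pair of positions $p<q$ in $[r]$ at a time. Set $\lambda=\varpi_{i_p}$ and $\mu=\varpi_{i_q}$. We want the two modules
\[
M\binom{w_q\varpi_{i_q}}{\overline v^l_q\varpi_{i_q}}
\quad\text{and}\quad
M\binom{w_p\varpi_{i_p}}{\overline v^l_p\varpi_{i_p}}
\]
to appear in the shape $M\binom{s's\mu}{t'\mu}$ and $M\binom{s'\lambda}{t't\lambda}$ of that proposition, with the roles of $\lambda,\mu$ interchanged.

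The natural choice is the following. Take $s'=w_p$ and $s=s_{i_{p+1}}\cdots s_{i_q}$, so that $s's=w_q$. Since $\overline w$ is reduced, we get $\ell(s's)=\ell(s')+\ell(s)$. On the lower side take $t'=\overline v^l_p$ and $t=(\overline v^l_p)^{-1}\overline v^l_q$. This $t$ is the product of the selected reflections $s_{i_{p_t}}$ with $p<p_t\le q$ and $t\le l$. Because the positions $p_1,\dots,p_l$ form a reduced subexpression of $v^l$ (Lemma~\ref{lem:leftmost-prefix}), every truncation is reduced as well, so $\ell(t't)=\ell(t')+\ell(t)$ with $t't=\overline v^l_q$.

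It remains to check the two Bruhat conditions:
\[
w_q\mu\preceq \overline v^l_p\mu
\qquad\text{and}\qquad
w_p\lambda\preceq \overline v^l_q\lambda .
\]

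The first condition comes from transitivity in $W/W_{J_\mu}$. We have $\overline v^l_p\le \overline v^l_q\le w_q$, since a subexpression of a reduced word gives a Bruhat-smaller element. Projecting to the parabolic quotient preserves these inequalities.

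The second condition is the substantive one. We need $\overline v^l_q W_{J_\lambda}\le w_pW_{J_\lambda}$ with $\lambda=\varpi_{i_p}$, even though $\overline v^l_q$ may use letters at positions beyond $p$. Here the leftmost (greedy) property has to do the work. First, the minor $D\binom{w_p\lambda}{\overline v^l_p\lambda}$ is nonzero by \eqref{eq:Dneq0}. The extra factor $t$ only uses positions in $(p,q]$. The claim to prove is that these letters either stabilize $\overline v^l_p\varpi_{i_p}$ modulo the relevant coset, or are absorbed by the greedy selection.

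I expect this step to be the main obstacle. If the direct coset check fails, the first fallback is to apply Proposition~\ref{pro:commutsstt} with the roles reversed: take $s'=w_p$ and $t'=\overline v^l_p$ as before, but attach the module at $q$ to $\lambda$. In that form the required inequalities become $w_q\varpi_{i_q}\preceq \overline v^l_p\varpi_{i_q}$, which holds by transitivity, and $w_p\varpi_{i_p}\preceq\overline v^l_q\varpi_{i_p}$. I would then check which of the two arrangements the proposition's hypotheses actually demand, by matching $\lambda$ to the module with the longer upper element.

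If neither arrangement works cleanly, the second fallback is induction on $l$ using the T-system of Proposition~\ref{pro:Tsystemmk}. For $l=0$ the family $\mathscr T_0$ consists of the standard unipotent minors $D_k$, which pairwise strongly commute by the $\mathscr C_w$ theory. Passing from $\mathscr T_{l-1}$ to $\mathscr T_l$ changes only the entries of color $i_{p_l}$ at positions $\ge p_l$. Each new entry is expressed through \eqref{eq:Tsystemid} as a quotient of products of old entries, and Lemma~\ref{lem:d-additivity} transfers the vanishing of $\mathfrak d$ from the old products to the new factors.
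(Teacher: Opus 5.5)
\textbf{There is a genuine gap: the main step, as set up, proves commutation of a different pair of modules, and its key hypothesis is false in general.}

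With $s'=w_p$, $s's=w_q$, $t'=\overline v^{\,l}_p$ and $t't=\overline v^{\,l}_q$, Proposition~\ref{pro:commutsstt} is about $M\binom{s's\lambda}{t'\lambda}$ and $M\binom{s'\mu}{t't\mu}$. It attaches the longer upper element to the shorter lower one. So what you would obtain is commutation of the crossed pair $M\binom{w_q\varpi_{i_q}}{\overline v^{\,l}_p\varpi_{i_q}}$ and $M\binom{w_p\varpi_{i_p}}{\overline v^{\,l}_q\varpi_{i_p}}$; these are exactly the modules whose Bruhat conditions you list. In $\mathscr T_l$, however, the module at $q$ carries both the longer upper and the longer lower element. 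The two pairs agree when $t$ fixes $\varpi_{i_p}$ and $\varpi_{i_q}$ (for instance when $l=0$), but not in general. Exchanging the names $\lambda,\mu$, as in your first fallback, does not change which upper element is paired with which lower one, so it is the same computation.

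The condition you call substantive also fails. In type $A_3$ take $\overline w=(1,2,3,1,2,1)$ and $v=s_2s_1$, so the leftmost positions are $(p_1,p_2)=(2,4)$. For $l=2$, $p=1$, $q=6$ you would need $s_2s_1W_{\{2,3\}}\le s_1W_{\{2,3\}}$, which is false. Hence $D\binom{s_1\varpi_1}{s_2s_1\varpi_1}=0$ by \eqref{eq:Dneq0}, and the second crossed module does not even exist. Yet the actual pair, $M\binom{s_1\varpi_1}{\varpi_1}=L(1)$ and $M\binom{w_0\varpi_1}{s_2s_1\varpi_1}=L(3)$, does commute. No appeal to the greedy property can repair an inequality that is false.

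Your induction fallback also breaks down, for three reasons.
\begin{itemize}
\item For positions $(i,d_l+k)$ with $k\ge1$, both terms on the right of \eqref{eq:Tsystemid} are nonzero, and the first one contains the new stage-$l$ minor at $(i,d_l+k-1)$. So a new entry is a binomial divided by a minor, not a quotient of products of old entries. Only at $p_l$ does one term vanish (Theorem~\ref{thm:Dpllaurent}).
\item Lemma~\ref{lem:d-additivity} propagates the vanishing of $\mathfrak d$ only through convolution products of strongly commuting real simples, never through sums of classes.
\item In the paper, the commutation of new entries with old ones (Propositions~\ref{cor:commute} and~\ref{pro:Tlkcommute}) is deduced from the very proposition you are proving. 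So this route is circular unless those facts are established independently.
\end{itemize}

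The missing idea is that nothing needs to be re-proved. By Lemma~\ref{lem:leftmost-prefix}, the positions $p_1,\dots,p_l$ form the leftmost subexpression of $v^l$ in $\overline w$. Hence the $\overline v^{\,l}_p$ are precisely the partial products attached to the pair $(w,v^l)$, and the statement is \cite[Theorem~4.10]{kashiwara2018monoidal} applied to $(w,v^l)$; for $l=0$ one uses the empty subexpression of $e$. That is the paper's whole proof. The commutation content lives in the cited theorem, and, as the example shows, it cannot be extracted from Proposition~\ref{pro:commutsstt} with the obvious prefixes.
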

\begin{proof}
For $l=0$, the empty subexpression is leftmost for $e$; for $l>0$,
Lemma~\ref{lem:leftmost-prefix} shows that the selected positions
$p_1,\ldots,p_l$ form the leftmost subexpression for $v^l$.
Thus the assertion is precisely the cited theorem applied to
$(w,v^l)$, with the indicated fundamental weights.
\end{proof}

The following lemma shows that, except for the last segment of color $i$
determined by $p_l$, the determinantial modules corresponding to
$\overline{v}^{\,l}$ and $\overline{v}^{\,l-1}$ coincide.
This observation will be used repeatedly in the subsequent arguments.

\begin{lemma}\label{lem:l=l-1}
For $l\in [\ell(v)]$ with $i_{p_l}=i$, and for any $q\notin I^l_{i,\alpha(i,l)}$, we have
\begin{equation}\label{eq:ml=l-1}
    M\binom{w_q\varpi_{i_q}}{\overline{v}^l_{q}\varpi_{i_q}}
    =
    M\binom{w_q\varpi_{i_q}}{\overline{v}^{\,l-1}_{q}\varpi_{i_q}}.
\end{equation}
\end{lemma}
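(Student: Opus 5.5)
The plan is to show that the two lower weights agree, $\overline v^{\,l}_q\varpi_{i_q}=\overline v^{\,l-1}_q\varpi_{i_q}$. The upper weight $w_q\varpi_{i_q}$ is the same on both sides, and by the convention of Section~\ref{sec:quiverhecke} a determinantial module depends only on its two weights: the minor is defined through extremal vectors, and $M$ is the self-dual simple module attached to that minor. Equal weights therefore give equal minors, and hence equal modules. I would split according to the position of $q$ relative to $p_l$ and according to its color.

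\emph{Case $q<p_l$.} From the discussion after Lemma~\ref{lem:leftmost-prefix}, $\overline v^{\,l-1}_k=\overline v^{\,l}_k$ for every $k<p_l$. So the two group elements already coincide and there is nothing more to prove.

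\emph{Case $q\ge p_l$ and $i_q=j\neq i$.} Here $\overline v^{\,l}_q=\overline v^{\,l-1}_q s_i$ by the same remark. Since $s_i\varpi_j=\varpi_j$ for $j\ne i$, we get
\[
\overline v^{\,l}_q\varpi_j=\overline v^{\,l-1}_q s_i\varpi_j=\overline v^{\,l-1}_q\varpi_j .
\]

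\emph{Case $q\ge p_l$ and $i_q=i$.} Write $q=(i,k)$. Because $p_l=(i,t_{\alpha(i,l)})$, the condition $q\ge p_l$ gives $k\ge t_{\alpha(i,l)}$. Moreover $k<n_i+1=t_{\alpha(i,l)+1}$, since $p_l$ is the last selected position at stage $l$ and so the last interval extends to the end of the color-$i$ occurrences. Hence $q\in I^l_{i,\alpha(i,l)}$, which is excluded by hypothesis. So this case does not arise.

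These three cases exhaust all possibilities and prove~\eqref{eq:ml=l-1}. The only delicate point is the last case: one must check that $I^l_{i,\alpha(i,l)}$ is exactly the set of color-$i$ positions at or after $p_l$. This follows from the definition of the intervals $I^l_{i,s}$ together with $t_{\alpha(i,l)}=d_l$, which is recorded just before the definition of $N_l$. Nonvanishing of the minors plays no role, since both sides are literally the same minor, or both equal the unit module when the endpoints coincide.
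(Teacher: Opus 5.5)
Your proof is correct and follows essentially the same approach as the paper. Both split into $q<p_l$, where the prefixes $\overline v^{\,l}_q=\overline v^{\,l-1}_q$ already agree, and $q\ge p_l$. In the second case the hypothesis forces $i_q\ne i$, because $I^l_{i,\alpha(i,l)}$ is exactly the set of color-$i$ positions at or after $p_l$, and then $s_i\varpi_j=\varpi_j$ gives the equality of lower weights.
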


\begin{proof}
Recall from Section~\ref{sec:leftmost} that
\[
\overline{v}^l_q=\overline{v}_q^{\,l-1}
\qquad \text{for all } q<p_l .
\]
Therefore the equality~\eqref{eq:ml=l-1} holds immediately for $q<p_l$.

Now suppose $q\ge p_l$ but $q\notin I^l_{i,\alpha(i,l)}$.
By definition, $I^l_{i,\alpha(i,l)}$ consists of all vertices
$(i,k)$ with $k\ge t_{\alpha(i,l)}$, that is, all indices
$p\ge p_l$ of color $i$.
Hence $q$ must have color different from $i$, say $q=(j,k)$ with $j\neq i$.

Since $v^l=v^{l-1}s_i$, we have
\[
\overline{v}_q^{\,l}\varpi_j
=
\overline{v}_q^{\,l-1}s_i\varpi_j.
\]
Because $j\neq i$, the simple reflection $s_i$ fixes the fundamental weight
$\varpi_j$, and therefore
\[
\overline{v}_q^{\,l}\varpi_j
=
\overline{v}_q^{\,l-1}\varpi_j .
\]
This implies the equality of the corresponding determinantial modules,
and hence~\eqref{eq:ml=l-1} holds for such $q$ as well.
\end{proof}

Recall that for $l\in [m]$ we set $(i,d_l)=p_l$ and consider the set
$I_{i,\alpha(i,l)}^l\setminus\{p_l\}$.
For $0\leq k\leq n_i-d_l$, we define
\begin{equation}
\begin{split}
Y_{l,k}
&=
\Set{(i,s)\in I^l_{i,\alpha(i,l)}}{d_l+1\le s\le d_l+k},\\
T_{l,k}
&=
[r]\setminus
\left\{(i,s)\in I_i\mid d_l\le s\le d_l+k-1\right\}.
\end{split}
\end{equation}
In particular, $Y_{l,0}=\varnothing$ and $T_{l,0}=[r]$.

Thus $Y_{l,k}$ consists of the vertices of color $i$ in $I_{i,\alpha(i,l)}^l$
whose indices lie between $d_l+1$ and $d_l+k$. In other words, it records the
initial segment of $I_{i,\alpha(i,l)}^l$ following the vertex $p_l=(i,d_l)$,
up to the vertex $(i,d_l+k)$.

On the other hand, $T_{l,k}$ is obtained from the full index set $[r]$ by
removing the vertices $(i,s)$ with $d_l\le s\le d_l+k-1$.
Equivalently, $T_{l,k}$ may be viewed as the complement in $[r]$ of the shifted
set $Y_{l,k}[-1]$. Set 
\[I_{i,\alpha(i,l),\leq k}^l:=\{(i,s)\in I_i \mid b_l + 1 \leq s \leq b_l + k\}\]
It follows from $d_l = b_l + \alpha(i,l)$ that
\begin{equation}
(\Phi_i^l)^{-1}(Y_{l,k})
=
I_{i,\alpha(i,l),\leq k}^l,
\end{equation}
and
\begin{equation}
\Phi_i^{l-1}\bigl(I_{i,\alpha(i,l),\leq k}^l\bigr)
=
Y_{l,k}[-1].
\end{equation}
Therefore, there exists a well-defined map
\begin{equation}\label{eq:Phijlk}
\Phi_j^{l,k} : I_{j,\leq l-1} \to Y_{l,k} \sqcup T_{l,k}
\end{equation}
such that
\[
\Phi_j^{l,k}(j,s)
=
\begin{cases}
\Phi_j^l(j,s), & \text{if } (j,s)\in I_{i,\alpha(i,l),\leq k}^l,\\
\Phi_j^{l-1}(j,s), & \text{otherwise}.
\end{cases}
\]

\begin{proposition}\label{cor:commute}
For each $(i,d_l+k)\in I_{i,\alpha(i,l)}^l\setminus\{p_l\}$, the modules
\begin{equation}\label{eq:commutevlvl-1}
M\binom{w_{(i,d_l+k)}\varpi_{i}}{\overline{v}^l_{(i,d_l+k)}\varpi_i}
\quad\text{and}\quad
M\binom{w_p\varpi_{i_p}}{\overline{v}^{\,l-1}_p\varpi_{i_p}} \text{ strongly commute for all $p\in T_{l,k}$.}
\end{equation}
\end{proposition}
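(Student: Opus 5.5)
Write $x:=(i,d_l+k)$ and $X:=M\binom{w_x\varpi_i}{v^l\varpi_i}$. Since $x\ge p_l$ and $x$ has color $i$, we have $\overline v^{\,l}_x=v^l$. The plan is to compare $X$ with the family $\mathscr T_{l-1}$ by splitting $T_{l,k}$ into three kinds of positions:
\begin{enumerate}
\item positions $p$ whose module at stage $l-1$ coincides with its module at stage $l$;
\item positions of color $i$ with $p\ge (i,d_l+k)$;
\item positions of color $i$ with $p<p_l$.
\end{enumerate}
By definition of $T_{l,k}$, the removed vertices are exactly $(i,s)$ with $d_l\le s\le d_l+k-1$. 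So every $p\in T_{l,k}$ is either of color $j\neq i$, or of color $i$ with occurrence number $<d_l$ or $\ge d_l+k$.

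\emph{Cases (1) and (3).} If $p\notin I^l_{i,\alpha(i,l)}$, Lemma~\ref{lem:l=l-1} identifies $M\binom{w_p\varpi_{i_p}}{\overline v^{\,l-1}_p\varpi_{i_p}}$ with the stage-$l$ module at $p$. This covers all other colors and all color-$i$ positions before $p_l$. In this case the stage-$l$ module lies in $\mathscr T_l$, together with $X$, so Proposition~\ref{pro:commutewv} for $l$ gives strong commutation directly.

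\emph{Case (2).} Let $p=(i,d_l+k')$ with $k'\ge k$. The two modules are
\[
M\binom{w_x\varpi_i}{v^l\varpi_i}\quad\text{and}\quad M\binom{w_p\varpi_i}{v^{l-1}\varpi_i},
\]
with $w_p=w_x\,y$ for a length-additive suffix $y$. I would apply Proposition~\ref{pro:commutsstt} with $\lambda=\mu=\varpi_i$. Take $s'=w_x$, $s=y$, $t'=v^{l-1}$ and $t=s_i$, so that $t't=v^l$ is length-additive because $v^{l-1}<v^l$.

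Then the first module of Proposition~\ref{pro:commutsstt}, $M\binom{s's\lambda}{t'\lambda}$, is $M\binom{w_p\varpi_i}{v^{l-1}\varpi_i}$. The second, $M\binom{s'\mu}{t't\mu}$, is $M\binom{w_x\varpi_i}{v^l\varpi_i}=X$.

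The hypotheses to check are $w_p\varpi_i\preceq v^{l-1}\varpi_i$ and $w_x\varpi_i\preceq v^l\varpi_i$. Both follow from the non-vanishing criterion \eqref{eq:Dneq0}, because $v^{l-1}\le v^l\le w_{p_l}\le w_x\le w_p$ in Bruhat order (via subwords). The case $k'=k$, i.e.\ $p=x$ itself, is included here and needs no separate treatment.

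\emph{Obstacle.} The step I expect to be delicate is the bookkeeping in case (1)/(3): confirming that every $p\in T_{l,k}$ outside case (2) really lies outside $I^l_{i,\alpha(i,l)}$. This holds because $I^l_{i,\alpha(i,l)}$ consists exactly of the color-$i$ positions $\ge p_l$, and $T_{l,k}$ removes precisely $p_l$ through $(i,d_l+k-1)$ among them. A second point to double-check is that the length-additivity and $\preceq$ conventions of Proposition~\ref{pro:commutsstt} match the quantum-minor convention used here. If they do not, I would fall back on Lemma~\ref{lem:d-additivity}: I would use the T-system relation of Proposition~\ref{pro:Tsystemmk} to exhibit $X$ as a convolution factor of a product of modules in $\mathscr T_{l-1}$, which all strongly commute with the given $M$ by Proposition~\ref{pro:commutewv} at stage $l-1$.
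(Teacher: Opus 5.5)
Your proposal is correct and follows the paper's proof essentially step for step. The paper also treats $p\notin I^l_{i,\alpha(i,l)}$ via Lemma~\ref{lem:l=l-1} together with Proposition~\ref{pro:commutewv} at stage $l$. It handles the remaining $p=(i,s)$ with $d_l+k\le s\le n_i$ by Proposition~\ref{pro:commutsstt}, with exactly your choices $s'=w_{(i,d_l+k)}$, $s=u$, $t'=v^{l-1}$, $t=s_i$.

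Your explicit check of the $\preceq$ hypotheses through $v^{l-1}\le v^l\le w_{p_l}\le w_x\le w_p$ is a detail the paper leaves implicit. The T-system fallback you describe is not needed.
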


\begin{proof}
First consider the case $p\notin I^l_{i,\alpha(i,l)}$.
By Lemma~\ref{lem:l=l-1}, we have
\[
M\binom{w_p\varpi_{i_p}}{\overline{v}^{\,l-1}_p\varpi_{i_p}}
=
M\binom{w_p\varpi_{i_p}}{\overline{v}^{\,l}_p\varpi_{i_p}}.
\]
Therefore equation~\eqref{eq:commutevlvl-1} follows immediately from
Proposition~\ref{pro:commutewv}.

Next assume that $p\in T_{l,k}\cap I_{i,\alpha(i,l)}^l$.
By definition, this means
\[
p=(i,s)\quad\text{with}\quad d_l+k\le s\le n_i .
\]
For such $p$, we have
\[
\overline{v}_p^{\,l-1}s_i
=
v^{l-1}s_i
=
v^l
=
\overline{v}_p^{\,l}.
\]
Moreover, since $p=(i,s)$ with $s\ge d_l+k$, we can write
\[
w_p = w_{(i,d_l+k)}\,u
\]
with $\ell(w_p)=\ell(w_{(i,d_l+k)})+\ell(u)$.
Applying Proposition~\ref{pro:commutsstt} with
$s'=w_{(i,d_l+k)}$, $s=u$, $t'=v^{l-1}$, and $t=s_i$ yields
equation~\eqref{eq:commutevlvl-1}.

This completes the proof.
\end{proof}

\begin{definition}
Let $v\le w$ be Weyl group elements. 
For $l\in [\ell(v)]$, write $p_l=(i,d_l)$. 
For each $0\leq k\leq n_i-d_l$, define
\[
\mathscr{T}_{l,k}
:=
\left\{
M\binom{w_p\varpi_{i_p}}{\overline{v}^{\,l-1}_p\varpi_{i_p}}
\right\}_{p\in T_{l,k}}
\ \cup\
\left\{
M\binom{w_q\varpi_{i}}{\overline{v}^{\,l}_q\varpi_i}
\right\}_{q\in Y_{l,k}}.
\]
\end{definition}
One of the motivations for this definition is to relate it to the mutation
sequence $\widetilde{\mu}_l$ introduced in Definition~\ref{def:subwordmutation}.
By Lemma~\ref{lem:l=l-1}, the determinantial modules in $\mathscr{T}_{l,k}$
that differ from those in $\mathscr{T}_l$ are precisely the elements in
\[
\left\{
M\binom{w_p\varpi_{i_p}}{\overline{v}^{\,l-1}_p\varpi_{i_p}}
\;\middle|\;
p=(i,s)\ \text{with}\ d_l+k\le s\le n_i
\right\}.
\]


Note that
\begin{equation}\label{eq:Tlk-1Ylk}
T_{l,k-1}=T_{l,k}\cup\{(i,d_l+k-1)\},
\qquad
Y_{l,k}=Y_{l,k-1}\cup\{(i,d_l+k)\}.
\end{equation}
Thus the sets $T_{l,k}$ and $Y_{l,k}$ are obtained from $T_{l,k-1}$ and
$Y_{l,k-1}$ by removing and adding a single vertex, respectively.

Consequently, we obtain the recursive relation
\begin{equation}\label{eq:Tlkrecursion}
\mathscr{T}_{l,k}
\setminus
\left\{
M\binom{w_{(i,d_l+k)}\varpi_{i}}{\overline{v}^{\,l}_{(i,d_l+k)}\varpi_i}
\right\}
=
\mathscr{T}_{l,k-1}
\setminus
\left\{
M\binom{w_{(i,d_l+k-1)}\varpi_i}{\overline{v}^{\,l-1}_{(i,d_l+k-1)}\varpi_i}
\right\},
\end{equation}
for $1\leq k\leq n_i-d_l$, where by convention we set
$\mathscr{T}_{l,0}=\mathscr{T}_{l-1}$. It is easy to see
\begin{equation}\label{eq:Phil-1Phil}
   \Phi_i^{l-1} (\Phi_i^{l})^{-1}((i,d_l+k))=(i,d_l+k-1).
\end{equation}

Combining Lemma~\ref{lem:l=l-1} with the above discussion, we further obtain
\begin{equation}\label{eq:Tlni-d}
\mathscr{T}_{l,n_i-d_l}\setminus
\left\{
M\binom{w_{(i,n_i)}\varpi_i}{\overline{v}^{\,l-1}_{(i,n_i)}\varpi_i}
\right\}
=
\mathscr{T}_{l}\setminus
\left\{
M\binom{w_{p_l}\varpi_i}{\overline{v}^{\,l}_{p_l}\varpi_i}
\right\}.
\end{equation}

\begin{proposition}\label{pro:Tlkcommute}
Fix $l\in[m]$ and put $i=i_{p_l}$.
For each $0\leq k\leq n_i-d_l$, the family $\mathscr{T}_{l,k}$ mutually strongly commutes. 
Moreover, for $1\leq k\leq n_i-d_l$,
\begin{equation}\label{eq:dMlml-1}
\mathfrak{d}\!\left(
M\binom{w_{(i,d_l+k)}\varpi_{i}}{\overline{v}^{\,l}_{(i,d_l+k)}\varpi_i},
M\binom{w_{(i,d_l+k-1)}\varpi_{i}}{\overline{v}^{\,l-1}_{(i,d_l+k-1)}\varpi_i}
\right)
=1.
\end{equation}
\end{proposition}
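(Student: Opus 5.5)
Both assertions are proved by induction on $k$, using the recursion \eqref{eq:Tlkrecursion} for the commutation statement and the identity of Proposition~\ref{pro:Tsystemmk} for the $\mathfrak d$-value.

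\textbf{Base case and induction step for commutation.} For $k=0$ we have $\mathscr T_{l,0}=\mathscr T_{l-1}$, which mutually strongly commutes by Proposition~\ref{pro:commutewv}. Suppose $\mathscr T_{l,k-1}$ mutually strongly commutes. By \eqref{eq:Tlkrecursion}, $\mathscr T_{l,k}$ is obtained from $\mathscr T_{l,k-1}$ by removing
\[
M_-:=M\binom{w_{(i,d_l+k-1)}\varpi_i}{\overline v^{\,l-1}_{(i,d_l+k-1)}\varpi_i}
\]
and adding
\[
M_+:=M\binom{w_{(i,d_l+k)}\varpi_i}{\overline v^{\,l}_{(i,d_l+k)}\varpi_i}.
\]
The members of $\mathscr T_{l,k}$ other than $M_+$ therefore commute among themselves by the induction hypothesis. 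It remains to show that $M_+$ strongly commutes with each of them. These members come in two kinds.
\begin{enumerate}
\item The modules indexed by $p\in T_{l,k}$. Here Proposition~\ref{cor:commute} applies directly.
\item The modules indexed by $q\in Y_{l,k-1}$. These are modules $M\binom{w_q\varpi_i}{v^l\varpi_i}$ with $q=(i,s)$ and $s<d_l+k$, all lying at or after $p_l$, so their lower weights equal $v^l\varpi_i$. Both they and $M_+$ therefore belong to $\mathscr T_l$, which mutually strongly commutes by Proposition~\ref{pro:commutewv}.
\end{enumerate}
Hence the commutation statement is essentially formal bookkeeping. One point must be checked: the index $(i,d_l+k-1)$ lies in $T_{l,k-1}$ but not in $T_{l,k}$. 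This is exactly \eqref{eq:Tlk-1Ylk}.

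\textbf{The $\mathfrak d$-value.} I would use the identity \eqref{eq:Tsystemid} of Proposition~\ref{pro:Tsystemmk} at $(i,d_l+k)$. Its left-hand side is, up to a power of $q$, $[M_+][M_-]$. Its right-hand side is a sum of two classes:
\begin{enumerate}
\item the class of $M\binom{w_{(i,d_l+k)}\varpi_i}{v^{l-1}\varpi_i}\circ M\binom{w_{(i,d_l+k-1)}\varpi_i}{v^l\varpi_i}$, a product of two strongly commuting members of $\mathscr T_{l,k-1}$, resp.\ of $\mathscr T_l$; when $k=1$ the second factor is $M\binom{w_{p_l}\varpi_i}{v^l\varpi_i}$, again in $\mathscr T_l$;
\item the class of the $\lambda_i$-minor module.
\end{enumerate}
Both classes are nonzero. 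For the first, $v^l\le w_{(i,d_l+k-1)}$ holds since $d_l+k-1\ge d_l$. For the second, the $\lambda_i$-minor module is nonzero because $v^{l-1}\le w_{p-1}$.

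Since $M_+$ and $M_-$ are real simple and their product has two distinct simple constituents, $M_+\circ M_-$ is not simple, so $\mathfrak d(M_+,M_-)\ge1$. For the upper bound I would invoke the standard KKOP criterion \cite{kang2018monoidal}: for real simple modules, if $[M_+][M_-]$ is a sum of exactly two self-dual simple classes up to shifts, namely the head and the socle, then $\mathfrak d(M_+,M_-)=1$. Equivalently, this is the determinantial T-system statement \cite[Proposition~10.2.5 and Theorem~10.3.1]{kang2018monoidal}, which says that the two terms are the simple head and socle and that $\mathfrak d=1$. It applies once we note that the hypotheses $ws_i>w$, $vs_i>v$ hold here with $w=w_{p-1}$ and $v=v^{l-1}$.

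\textbf{Main obstacle.} The hard part is verifying that each right-hand term of \eqref{eq:Tsystemid} is the class of a single simple module. For the product term this uses the strong commutation established above. For the $\lambda_i$-minor term one uses \eqref{eq:neighbor-minor-normalization} together with the fact that each minor is a single upper global basis element (Lemma~\ref{lem:minorcanonical}). With that in hand, the KKOP result translating such a relation into $\mathfrak d=1$ finishes the proof.
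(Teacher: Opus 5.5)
Your commutation argument is correct and uses the same ingredients as the paper, namely Propositions \ref{cor:commute} and \ref{pro:commutewv}. The paper argues directly rather than by induction: it pairs $q=(i,d_l+k')\in Y_{l,k}$ with $p\in T_{l,k}\subset T_{l,k'}$. Your induction is the same argument reorganized. Your lower bound $\mathfrak d(M_+,M_-)\ge 1$ is also the paper's. Both right-hand terms of \eqref{eq:Tsystemid} are nonzero classes with nonnegative coefficients, so $M_+\circ M_-$ has at least two composition factors and cannot be simple.

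The gap is the upper bound. The criterion you call standard is not a theorem: "if $[M_+][M_-]$ is a sum of exactly two simple classes (head and socle), then $\mathfrak d(M_+,M_-)=1$." The known implication runs the other way: $\mathfrak d=1$ forces length two. In the symmetric Kac--Moody generality of the paper the converse is false. Take $c_{ij}=-2$. Then $L(i)\circ L(j)$ is two-dimensional with composition factors $L(ij)$ and $L(ji)$. But $\mathbf r_{L(i),L(j)}$ is given by $\tau_1$, of degree $-(\alpha_i,\alpha_j)=2$, and symmetrically for $\mathbf r_{L(j),L(i)}$, so $\mathfrak d(L(i),L(j))=2$. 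Consequently the step you flag as the main obstacle cannot yield $\mathfrak d\le 1$, however carefully it is checked. That step is showing each right-hand term is a single simple class. It is also incomplete at $k=1$: there the two factors of the product term are not both in $\mathscr T_{l,0}$, so their strong commutation would need Proposition \ref{pro:commutsstt} directly.

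What is actually needed is an upper bound specific to determinantial modules. Let $h$ be the position of $(i,d_l+k-1)$, and put $u=w_{p-1}$ and $a=v^{l-1}$. The paper applies \cite[Proposition~10.3.2(ii)]{kang2018monoidal} with $x=u$, $z=a$, $y=s_i$, after checking
\begin{itemize}
\item $a\le as_i=v^l\le w_h\le u$,
\item $u<us_i$,
\item $u\varpi_i=w_h\varpi_i$.
\end{itemize}
This gives $\mathfrak d(M_+,M_-)\le 1$. Your fallback sentence invoking the KKKO T-system is in this spirit, but it must cite a statement that actually bounds $\mathfrak d$ for the pair $M\binom{us_i\varpi_i}{as_i\varpi_i}$, $M\binom{u\varpi_i}{a\varpi_i}$. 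Its hypotheses also include the Bruhat relation $v^l\le w_{p-1}$, not only $w_{p-1}s_i>w_{p-1}$ and $v^{l-1}s_i>v^{l-1}$. Once that bound is in place, the single-simple-class verification is unnecessary.
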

\begin{proof}
For $k=0$, mutual strong commutativity is
Proposition~\ref{pro:commutewv}. For $k>0$, it follows from Propositions
\ref{cor:commute} and~\ref{pro:commutewv}, since
\(T_{l,k}\subset T_{l,k'}\) for \(k'<k\).

For the second assertion let \(h,p\) be the absolute positions of
\((i,d_l+k-1)\), \((i,d_l+k)\), respectively, and put
\(a=v^{l-1}\), \(b=as_i=v^l\), and \(u=w_{p-1}\).
The subword property gives
\[
a\leq b\leq w_h\leq u,\qquad u<us_i=w_p,
\qquad u\varpi_i=w_h\varpi_i.
\]
Set
\[
P=M\binom{w_p\varpi_i}{b\varpi_i},
\qquad Q=M\binom{w_h\varpi_i}{a\varpi_i}.
\]
Apply \cite[Proposition~10.3.2(ii)]{kang2018monoidal}
with \(x=u\), \(z=a\), and \(y=s_i\). Its length and Bruhat
hypotheses hold by the displayed relations, so
\(\mathfrak d(P,Q)\leq1\).

Both terms on the right of \eqref{eq:Tsystemid} are nonzero
classes of modules, up to powers of \(q\). Indeed, all their
determinantial endpoints are comparable because \(b\leq u\).
The first term is a convolution product of two nonzero
determinantial modules; the second is the nonzero determinantial
module \(M\binom{u\lambda_i}{a\lambda_i}\).
The isomorphism \(\Phi\) therefore turns \eqref{eq:Tsystemid} into
\[
[P][Q]=q^c[U]+q^d[V]
\]
for some nonzero modules \(U,V\) and integers \(c,d\).
Positivity in the simple-class basis implies that \(P\circ Q\)
is not simple. Since \(P,Q\) are real, this gives
\(\mathfrak d(P,Q)>0\), and hence \(\mathfrak d(P,Q)=1\).
\end{proof}

\begin{proposition}\label{lem:comwvs}
Fix $l\in[m]$ and put $i=i_{p_l}$. For each $k\in [0,n_i-d_l]$, the family
\[
\mathscr{T}_{l,k}\cup
\left\{
M\binom{w\varpi_j}{v^s\varpi_j}
\;\middle|\;
0\leq s<l-1,\; j\in I
\right\}
\]
mutually strongly commutes.
\end{proposition}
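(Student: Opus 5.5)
The plan is to split the family into its two parts and check strong commutation pairwise. Within $\mathscr T_{l,k}$, mutual strong commutation is already Proposition~\ref{pro:Tlkcommute}. It therefore remains to check two things: that the extra modules $N_{s,j}:=M\binom{w\varpi_j}{v^s\varpi_j}$ ($0\le s<l-1$, $j\in I$) commute among themselves, and that each $N_{s,j}$ commutes with each member of $\mathscr T_{l,k}$. The main tools are Proposition~\ref{pro:commutsstt} and Proposition~\ref{pro:commutewv}, the latter applied at stage $s$.

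\emph{Extra modules among themselves.} Take $s\le s'$ and colors $j,j'$. I apply Proposition~\ref{pro:commutsstt} with $\lambda=\varpi_j$, $\mu=\varpi_{j'}$, and
\[
s'\!=w,\qquad s=e,\qquad t'=v^s,\qquad t=(v^s)^{-1}v^{s'}.
\]
Since $v^{s'}=v^s\,s_{i_{p_{s+1}}}\cdots s_{i_{p_{s'}}}$ is reduced, we have $\ell(t't)=\ell(t')+\ell(t)$. The conditions $w\varpi_j\preceq v^s\varpi_j$ and $w\varpi_{j'}\preceq v^{s'}\varpi_{j'}$ follow from $v^s,v^{s'}\le w$ together with \eqref{eq:Dneq0}.

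\emph{Mixed pairs.} First I note that $N_{s,j}\in\mathscr T_s$. If $j$ occurs in $\overline w$, take $q=(j,n_j)$. Then $w_q\varpi_j=w\varpi_j$, because later letters have other colors and fix $\varpi_j$. Likewise $\overline v^{\,s}_q\varpi_j=v^s\varpi_j$, because selected positions after $q$ also have colors $\ne j$. If $j$ does not occur, both weights equal $\varpi_j$, $N_{s,j}$ is the unit module, and there is nothing to prove. Now consider a member $M\binom{w_p\varpi_{i_p}}{x\varpi_{i_p}}$ of $\mathscr T_{l,k}$, where $x=\overline v^{\,l-1}_p$ or $x=\overline v^{\,l}_p$ (the latter for $p\in Y_{l,k}$). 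There are two cases.

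\begin{enumerate}
\item[(a)] If $p<p_{s+1}$, then $x=\overline v^{\,s}_p$, since $s+1\le l-1$. So this member also lies in $\mathscr T_s$, and Proposition~\ref{pro:commutewv} at stage $s$ gives strong commutation with $N_{s,j}$.
\item[(b)] If $p\ge p_{s+1}$, then $x=v^s y$ with $\ell(x)=\ell(v^s)+\ell(y)$, because $x$ is a product of an initial segment of $\overline v^{\,l}$ containing $p_1,\dots,p_s$. Since $w_p$ is a prefix of the reduced word $\overline w$, we also have $w=w_p u$ with lengths adding. I apply Proposition~\ref{pro:commutsstt} with $\lambda=\varpi_j$, $\mu=\varpi_{i_p}$, and
\[
s'=w_p,\qquad s=u,\qquad t'=v^s,\qquad t=y.
\]
This gives $M\binom{s's\lambda}{t'\lambda}=N_{s,j}$ and $M\binom{s'\mu}{t't\mu}=M\binom{w_p\varpi_{i_p}}{x\varpi_{i_p}}$. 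The hypothesis $w\varpi_j\preceq v^s\varpi_j$ holds as before. The hypothesis $w_p\varpi_{i_p}\preceq x\varpi_{i_p}$ holds because $x$ is a subword product of the reduced prefix $w_p$, so $x\le w_p$.
\end{enumerate}

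The step needing the most care is the bookkeeping in case (b). One must check that $x$ really factors through $v^s$ with additive length in both the $\overline v^{\,l-1}$ and the $\overline v^{\,l}$ situations. For $p\in Y_{l,k}$ we have $x=v^l$, and for $p$ of color $i$ at or after $p_l$ we have $x=v^{l-1}$; the remaining positions require checking against the definition of $\overline v^{\,l}_p$. One must also check that the non-vanishing conditions are exactly the coset Bruhat conditions required in Proposition~\ref{pro:commutsstt}. Once this is done, all pairs are covered and the family mutually strongly commutes.
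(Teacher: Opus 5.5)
Your proposal is correct and follows essentially the same route as the paper. Within $\mathscr T_{l,k}$ you use Proposition~\ref{pro:Tlkcommute}; for the extra modules among themselves you use Proposition~\ref{pro:commutsstt} with $s'=w$, $s=e$; and for mixed pairs you use Proposition~\ref{pro:commutewv} at stage $s$ for early positions and Proposition~\ref{pro:commutsstt} with the length-additive factorizations $x=v^s y$ and $w=w_pu$ otherwise. The differences are cosmetic: you split at $p_{s+1}$ rather than $p_s$, and you make explicit that $M\binom{w\varpi_j}{v^s\varpi_j}$ lies in $\mathscr T_s$ (via the last occurrence of color $j$), which the paper uses without comment.
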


\begin{proof}
Let $0\leq s<l-1$ and $j\in I$, and put $p_0=0$.
We first show that the determinantial modules
$M\binom{w\varpi_j}{v^s\varpi_j}$ strongly commute with all modules in
$\mathscr{T}_{l,k}$.

If $q\le p_s$, then
\[
\overline{v}^{\,l-1}_q=\overline{v}^{\,l}_q=\overline{v}^{\,s}_q.
\]
Hence, by Proposition~\ref{pro:commutewv}, the module
\[
M\binom{w_q\varpi_{i_q}}{\overline{v}^{\,l-1}_q\varpi_{i_q}}
\]
strongly commutes with $M\binom{w\varpi_j}{v^s\varpi_j}$.

Next consider the case $q>p_s$. In this situation we can write
\[
\overline{v}_q^{\,l-1}=v^s u_1
\quad\text{and}\quad
\overline{v}_q^{\,l}=v^s u_2
\]
for some $u_1,u_2\in W$, with both factorizations length-additive.
Since $w_q$ is a prefix of the fixed reduced expression of $w$, we have
$w_qw'=w$ with $\ell(w)=\ell(w_q)+\ell(w')$.
Therefore Proposition~\ref{pro:commutsstt}
implies that $M\binom{w\varpi_j}{v^s\varpi_j}$ strongly commutes with any
determinantial module in $\mathscr{T}_{l,k}$.

Finally, for any $j,h\in I$ and $0\leq a\leq b<l-1$, the
factorization $v^b=v^a u$ is length-additive.
Apply Proposition~\ref{pro:commutsstt} with
$s'=w$, $s=e$, $t'=v^a$, and $t=u$. It follows that
\[
M\binom{w\varpi_j}{v^a\varpi_j}
\quad\text{and}\quad
M\binom{w\varpi_h}{v^b\varpi_h}
\]
strongly commute. Hence all modules in the stated family
mutually strongly commute.
\end{proof}

\begin{definition}
Let $\mathscr T=(T_i)_{i\in K}$ and $\mathscr S=(S_j)_{j\in J}$
be strongly commuting families. We say that $\mathscr S$ is
\emph{Laurent-generated} by $\mathscr T$ if each $\Phi([S_j])$
is a power of $q$ times an ordered Laurent monomial in the
$\Phi([T_i])$, in the common skew field of fractions.
We abbreviate this to ``generated'' when no confusion is possible.
This does not by itself assert that the exponents are nonnegative.
\end{definition}

\begin{proposition}\label{pro:Tlni-d}
Fix $l\in [\ell(v)]$ and put $i=i_{p_l}$. The family
\[
\mathscr{T}_{l,n_i-d_l}
\setminus 
\left\{
M\binom{w_{(i,n_i)}\varpi_i}{\overline{v}^{\,l-1}_{(i,n_i)}\varpi_i}
\right\}
\]
generates the commuting family $\mathscr{T}_l$.
\end{proposition}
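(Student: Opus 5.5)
By \eqref{eq:Tlni-d}, the family
\[
\mathscr S:=\mathscr{T}_{l,n_i-d_l}\setminus\left\{M\binom{w_{(i,n_i)}\varpi_i}{\overline{v}^{\,l-1}_{(i,n_i)}\varpi_i}\right\}
\]
coincides with $\mathscr{T}_l$ with the single module $M\binom{w_{p_l}\varpi_i}{\overline v^{\,l}_{p_l}\varpi_i}$ removed. Every other member of $\mathscr T_l$ therefore already lies in $\mathscr S$ and is trivially a (degree one) Laurent monomial in it. The whole statement thus reduces to one claim: $\Phi$ of the class of the module attached to $p_l$ at stage $l$ is, up to a power of $q$, an ordered Laurent monomial in the classes of $\mathscr S$.

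For this I will invoke Theorem~\ref{thm:Dpllaurent}. It expresses
\[
D\binom{w_{p_l}\varpi_i}{\overline v^{\,l}_{p_l}\varpi_i}
\]
as a $q$-power times a Laurent monomial in the minors $D\binom{w_k\varpi_{i_k}}{\overline v^{\,l-1}_k\varpi_{i_k}}$ with $k<p_l$. Concretely, via \eqref{eq:TsystemD} and \eqref{eq:neighbor-minor-normalization}, it uses:
\begin{itemize}
\item the previous occurrence $h_i$ of color $i$, with exponent $-1$;
\item the last occurrences $h_j<p_l$ of the neighboring colors $j\neq i$, with exponent $-c_{ji}\geq 0$.
\end{itemize}
Under $\Phi$ (Theorem~\ref{thm:isoK0Phi}) these minors are classes of the determinantial modules $M\binom{w_k\varpi_{i_k}}{\overline v^{\,l-1}_k\varpi_{i_k}}$, which are self-dual simple. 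The factors of the monomial are pairwise strongly commuting by Proposition~\ref{pro:commutewv}, so ordering them costs only a power of $q$.

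It remains to check that each such module lies in $\mathscr S$. Every position involved satisfies $k<p_l$. Two facts then apply:
\begin{itemize}
\item $T_{l,n_i-d_l}$ excludes only the color-$i$ positions $(i,s)$ with $d_l\le s\le n_i-1$, all of which are $\geq p_l$. Hence every $k<p_l$ lies in $T_{l,n_i-d_l}$, and the corresponding stage-$(l-1)$ module belongs to $\mathscr T_{l,n_i-d_l}$.
\item The removed module sits at $(i,n_i)\geq p_l$, so it is none of these.
\end{itemize}
When $h_i$ or $h_j$ does not exist the corresponding minor is $1$, and for $k<p_l$ the stage-$l$ and stage-$(l-1)$ modules agree by Lemma~\ref{lem:l=l-1}, so no ambiguity arises.

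The main obstacle I expect is bookkeeping rather than substance. The point needing care is that the module at $p_l$ in $\mathscr T_l$ really is the one removed in \eqref{eq:Tlni-d}, so that no other member of $\mathscr T_l$ needs regeneration. This amounts to verifying \eqref{eq:Tlni-d}: the stage-$l$ modules at $(i,d_l+1),\dots,(i,n_i)$ are exactly those indexed by $Y_{l,n_i-d_l}$. A secondary point is the $q$-power normalization: one should note that reordering the factors and passing from $\Phi([M])$ to minors both change only powers of $q$. This is allowed by the definition of Laurent-generation.
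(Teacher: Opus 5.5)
Your proposal is correct and follows essentially the same route as the paper. Both reduce via \eqref{eq:Tlni-d} to the single module at $p_l$, apply Theorem~\ref{thm:Dpllaurent}, and observe that every position $k<p_l$ lies in $T_{l,n_i-d_l}$ and differs from the removed position $(i,n_i)\geq p_l$; your added details (the explicit exponents $-1$ and $-c_{ji}$, reordering via Proposition~\ref{pro:commutewv}, and the case $n_i=d_l$, where the excluded set is empty) are consistent with the paper's argument.
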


\begin{proof}
By \eqref{eq:Tlni-d}, it suffices to show that
\[
\Phi\!\left(
\left[M\binom{w_{p_l}\varpi_i}{\overline{v}^{\,l}_{p_l}\varpi_i}\right]
\right)
\]
is a Laurent monomial in the classes of the retained family.

Theorem~\ref{thm:Dpllaurent} expresses it using only determinantial
modules indexed by positions $q<p_l$. These positions are unaffected
in $\mathscr T_{l,n_i-d_l}$ and are distinct from the removed position
$(i,n_i)\geq p_l$. Thus every factor in that expression is retained,
including when $n_i=d_l$ and the mutation sequence is empty.
\end{proof}

\section{Monoidal categorification of quiver Hecke algebras}\label{sec:monoidalcat}

Throughout this section, the Cartan datum is symmetric. We work with
graded Grothendieck rings over $\mathbb Z[q^{\pm1/2}]$, extending scalars
from $\mathbb Z[q^{\pm1}]$ when necessary. Equalities between convolution
products of simple modules are understood up to grading shift unless
a normalization is explicitly specified.
\subsection{Subcategories associated with Weyl group elements}
For $M\in R(\alpha)\text{-}\gmod$, define
\[
\mathbf W(M):=\{\beta\in Q^+\mid
\alpha-\beta\in Q^+,\ e(\beta,\alpha-\beta)M\ne0\},
\]
\[
\mathbf W^*(M):=\{\beta\in Q^+\mid
\alpha-\beta\in Q^+,\ e(\alpha-\beta,\beta)M\ne0\}.
\]

Let $w$ be a Weyl group element, and let $\overline{w}=(i_1\cdots i_r)$ be a reduced expression of $w$. We have
\[\Delta^+\cap w\Delta^-\Seteq{\beta_k=s_{i_1}\cdots s_{i_{k-1}}\alpha_{i_k}}{k\in [r]}.\]
We use this finite inversion set in the support conditions below.


\begin{definition}
Let $v,w\in W$ with $v\le w$. 

\begin{enumerate}
\item The full subcategory $\mathscr{C}_w$ of $R\operatorname{-gmod}$ consists of modules $M$ such that
\[
\mathbf{W}(M)
\subset
\operatorname{span}_{\mathbb{R}_{\ge 0}}
\bigl(\Delta^+ \cap w\Delta^- \bigr).
\]

\item The full subcategory $\mathscr{C}_{*,v}$ of $R\operatorname{-gmod}$ consists of modules $M$ such that
\[
\mathbf{W}^*(M)
\subset
\operatorname{span}_{\mathbb{R}_{\ge 0}}
\bigl(\Delta^+ \cap v\Delta^+ \bigr).
\]
\end{enumerate}

We define
\[
\mathscr{C}_{w,v}
:=
\mathscr{C}_w
\cap
\mathscr{C}_{*,v}.
\]
\end{definition}

\begin{proposition}[{\cite[Proposition~2.16]{kashiwara2018monoidal}}]
The categories $\mathscr C_w$, $\mathscr C_{*,v}$, and
$\mathscr C_{w,v}$ are closed under subquotients, extensions,
convolution products, and grading shifts.
\end{proposition}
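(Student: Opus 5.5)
The proposition is quoted from \cite[Proposition~2.16]{kashiwara2018monoidal}, so the plan is to reduce everything to properties of the sets $\mathbf W(M)$ and $\mathbf W^*(M)$. The key tool is the Mackey (shuffle) filtration of restrictions of convolution products. We treat $\mathscr C_w$ and $\mathscr C_{*,v}$ separately. The statement for $\mathscr C_{w,v}$ then follows, because an intersection of two full subcategories each closed under these operations is again closed under them.

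Grading shifts do not change any idempotent truncation, so $\mathbf W$ and $\mathbf W^*$ are invariant under shift. For subquotients and extensions we use exactness of $M\mapsto e(\beta,\alpha-\beta)M$. If $0\to M'\to M\to M''\to 0$ is exact, then $e(\beta,\alpha-\beta)M\neq0$ if and only if $e(\beta,\alpha-\beta)M'\ne0$ or $e(\beta,\alpha-\beta)M''\neq0$. Hence
\[
\mathbf W(M)=\mathbf W(M')\cup\mathbf W(M'').
\]
The same holds for $\mathbf W^*$. This gives closure under subquotients and extensions at once.

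For convolution, let $M\in R(\alpha)$-gmod and $N\in R(\gamma)$-gmod. The shuffle lemma gives a filtration of $e(\beta,\alpha+\gamma-\beta)(M\circ N)$ whose subquotients are, up to shift, of the form
\[
\bigl(e(\beta_1,\alpha-\beta_1)M\bigr)\circ\bigl(e(\beta_2,\gamma-\beta_2)N\bigr)
\]
after suitable rearrangement, with $\beta=\beta_1+\beta_2$. Consequently
\[
\mathbf W(M\circ N)\subset \mathbf W(M)+\mathbf W(N),
\]
and similarly for $\mathbf W^*$. The cone $\operatorname{span}_{\mathbb R_{\ge0}}(\Delta^+\cap w\Delta^-)$ is closed under addition, so closure under convolution follows for $\mathscr C_w$. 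The same argument with the cone $\operatorname{span}_{\mathbb R_{\ge 0}}(\Delta^+\cap v\Delta^+)$ gives closure for $\mathscr C_{*,v}$.

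The main point requiring care is the precise form of the Mackey filtration. We only need its support consequence: a piece indexed by $(\beta_1,\beta_2)$ is nonzero only if $e(\beta_1,*)M\ne0$ and $e(\beta_2,*)N\ne0$. For $\mathbf W^*$, this should be applied with the right-hand truncations $e(*,\beta_1)M$ and $e(*,\beta_2)N$. We take this from \cite[Section~2]{kang2018monoidal} rather than reproving it.
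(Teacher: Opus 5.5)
Your proposal is correct: exactness of idempotent truncation gives $\mathbf W(M)=\mathbf W(M')\cup\mathbf W(M'')$, which handles shifts, subquotients and extensions, and the Mackey/shuffle filtration gives $\mathbf W(M\circ N)\subset\mathbf W(M)+\mathbf W(N)$ (and likewise for $\mathbf W^*$ with right-hand truncations), so convexity of the cones gives closure under convolution. The paper gives no proof beyond citing \cite[Proposition~2.16]{kashiwara2018monoidal}, and your route is essentially the standard support-of-restriction argument behind that result.
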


We will use the following lemma throughout the remainder of the paper.

\begin{lemma}\label{lem:McircN}
Let $M,N$ be nonzero modules, and set $L=M\circ N$.

\begin{enumerate}
\item If $L\in \mathscr{C}_w$, then $M\in \mathscr{C}_w$.
\item If $L\in \mathscr{C}_{*,v}$, then $N\in \mathscr{C}_{*,v}$.
\end{enumerate}
\end{lemma}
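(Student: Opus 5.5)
The plan is to read both statements directly off the definitions of $\mathbf W(\cdot)$ and $\mathbf W^*(\cdot)$ by comparing the idempotent decomposition of $L=M\circ N$ with those of $M$ and $N$. Write $\wt(M)=\alpha$ and $\wt(N)=\beta$. The key input is the shuffle (Mackey) description of convolution: as a graded vector space,
\[
e(\gamma,\alpha+\beta-\gamma)\,(M\circ N)
\]
admits a filtration whose subquotients are induced from the pieces
\[
e(\gamma_1,\alpha-\gamma_1)M\boxtimes e(\gamma_2,\beta-\gamma_2)N,
\qquad \gamma_1+\gamma_2=\gamma .
\]
The special case we need is simpler. Since $R(\alpha+\beta)e(\alpha,\beta)$ is free as a right $R(\alpha)\otimes R(\beta)$-module, with basis given by minimal shuffle elements $\tau_\sigma$, the summand indexed by $\sigma=\mathrm{id}$ is $M\boxtimes N$ itself. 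This summand sits inside $e(\alpha,\beta)L$.

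For (1), take $\gamma\in\mathbf W(M)$, so that $e(\gamma,\alpha-\gamma)M\neq0$. Then
\[
e(\gamma,\alpha-\gamma,\beta)(M\boxtimes N)=e(\gamma,\alpha-\gamma)M\boxtimes N\neq0,
\]
because $N\neq0$. Since $M\boxtimes N$ embeds in $L$ as the $\sigma=\mathrm{id}$ summand, and $e(\gamma,\alpha-\gamma,\beta)$ is a sub-idempotent of $e(\gamma,\alpha+\beta-\gamma)$, we get $e(\gamma,\alpha+\beta-\gamma)L\neq0$. Hence $\gamma\in\mathbf W(L)\subset\operatorname{span}_{\mathbb R_{\ge0}}(\Delta^+\cap w\Delta^-)$. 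This gives $\mathbf W(M)\subset\mathbf W(L)$, and therefore $M\in\mathscr C_w$.

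For (2), the argument is symmetric. Take $\delta\in\mathbf W^*(N)$, so that $e(\beta-\delta,\delta)N\neq0$. The idempotent $e(\alpha,\beta-\delta,\delta)$ acts nontrivially on the embedded copy $M\boxtimes N$, and it refines $e(\alpha+\beta-\delta,\delta)$. Hence $\delta\in\mathbf W^*(L)\subset\operatorname{span}_{\mathbb R_{\ge0}}(\Delta^+\cap v\Delta^+)$, which gives $N\in\mathscr C_{*,v}$.

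The main point requiring care is the assertion that $M\boxtimes N\to M\circ N$, $m\otimes n\mapsto e(\alpha,\beta)\otimes(m\otimes n)$, is injective. This follows from the freeness of $R(\alpha+\beta)e(\alpha,\beta)$ over $R(\alpha)\otimes R(\beta)$, a basis theorem of Khovanov--Lauda and Rouquier; the same fact underlies the exactness already used in Lemma~\ref{lem:d-additivity}. Everything else is bookkeeping with concatenated idempotents.
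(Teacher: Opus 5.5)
Your proposal is correct and follows essentially the same argument as the paper. Both proofs use the identity-shuffle summand $M\boxtimes N\subset M\circ N$ and apply the refined idempotent $e(\gamma,\alpha-\gamma,\beta)$, respectively $e(\alpha,\beta-\delta,\delta)$, to obtain $\mathbf W(M)\subset\mathbf W(L)$ and $\mathbf W^*(N)\subset\mathbf W^*(L)$.
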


\begin{proof}
Let $\operatorname{wt}(M)=\alpha$ and $\operatorname{wt}(N)=\beta$. 
Suppose that $e(\gamma,\alpha-\gamma)M\neq 0$. 
We claim that
\[
e(\gamma,\alpha+\beta-\gamma)L\neq 0.
\]

The shuffle basis for induction contains the identity-shuffle summand
$M\boxtimes N$. Applying the idempotent
$e(\gamma,\alpha-\gamma,\beta)$ to this summand gives
$e(\gamma,\alpha-\gamma)M\boxtimes N\ne0$.
Since this idempotent is a summand of
$e(\gamma,\alpha+\beta-\gamma)$, we obtain
\[
e(\gamma,\alpha+\beta-\gamma)L\neq 0.
\]

This proves that $\mathbf{W}(M)\subset \mathbf{W}(L)$. 
Therefore, if $L\in \mathscr{C}_w$, then
\[
\mathbf{W}(M)
\subset
\mathbf{W}(L)
\subset
\operatorname{span}_{\mathbb{R}_{\ge 0}}(\Delta^+\cap w\Delta^-),
\]
and hence $M\in \mathscr{C}_w$.

The second statement is proved similarly. Indeed, one shows that
\[
\mathbf{W}^*(N)\subset \mathbf{W}^*(L),
\]
and thus $L\in \mathscr{C}_{*,v}$ implies $N\in \mathscr{C}_{*,v}$.
\end{proof}

\begin{theorem}[{\cite{kang2018monoidal}}]
The Grothendieck ring $K(\mathscr{C}_w)$ provides a monoidal categorification of
\[
A_q(\mathfrak{n}(w))_{\KK}.
\]
In particular, every cluster monomial corresponds, up to the standard
quantum normalization, to the class of a real self-dual simple object in
$\mathscr{C}_w$. The initial monoidal seed is
\begin{equation}\label{eq:initialseed}
\mathbf{s}(\overline{w})
:=
\left(
\left\{
M\binom{w_k\varpi_{i_k}}{\varpi_{i_k}}
\right\}_{k\in [r]},
\,\Lambda_{\overline{w}},
\, B_{\overline{w}},\, J_{\mathrm{uf}}
\right),
\end{equation}
where $J_{\mathrm{uf}}$ is the mutable set of the word seed. Its quantum
variables are the normalized classes from
\eqref{eq:normalized-word-seed}.
We denote by $M_k$ the simple module $M\binom{w_k\varpi_{i_k}}{\varpi_{i_k}}$.
With the $R$-matrix degree convention used here, the commutation matrix is
\[
(\Lambda_{\overline w})_{pq}=-\Lambda(M_p,M_q)
\qquad(p,q\in[r]).
\]
\end{theorem}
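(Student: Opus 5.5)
The plan is to deduce the statement from the general monoidal categorification criterion of Kang--Kashiwara--Kim--Oh: an \emph{admissible} monoidal seed in a full monoidal subcategory stable under subquotients, extensions and convolutions, whose class seed coincides with a quantum seed of the algebra, yields a monoidal categorification. This is \cite[Theorem~7.1.3]{kang2018monoidal}. Its proof goes through the mutation step: a real simple $M_k'$ with $\mathfrak d(M_k,M_k')=1$ realizing the exchange relation produces a new admissible seed. Together with the identification $\overline{\mathcal A}_q(\mathbf s_q(\overline w))\simeq A_q(\mathfrak n(w))_{\KK}$ of \cite{geiss2013cluster,kang2018monoidal} and the isomorphism $\Phi$ of Theorem~\ref{thm:isoK0Phi}, this gives $K(\mathscr C_w)\simeq A_q(\mathfrak n(w))_{\KK}$. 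Every cluster monomial is then the normalized class of a real self-dual simple object. So the work is to check that \eqref{eq:initialseed} is an admissible monoidal seed in $\mathscr C_w$.

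The verification proceeds in four steps. \emph{Step 1:} $M_k\in\mathscr C_w$. Each $D_k$ lies in $A_q(\mathfrak n(w))$, and the simple modules whose classes lie there are exactly those with $\mathbf W(M)$ in the cone of $\Delta^+\cap w\Delta^-$; alternatively, one checks $\mathbf W(M_k)$ directly from the weight $\varpi_{i_k}-w_k\varpi_{i_k}=\beta_k$-type decomposition along $w_{\le k}$. \emph{Step 2:} the $M_k$ are real and mutually strongly commute. Reality is \cite[Lemma~10.2.2]{kang2018monoidal}. Commutation is Proposition~\ref{pro:commutewv} with $l=0$, or directly Proposition~\ref{pro:commutsstt} with $t'=t=e$ and $s'=w_{\min(p,q)}$. \emph{Step 3:} the commutation matrix. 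For commuting determinantial modules, $\Lambda(M_p,M_q)$ is read off from the $q$-commutation of the minors, \cite[Corollary~9.1.3 and Theorem~10.3.1]{kang2018monoidal}. This gives $-\Lambda(M_p,M_q)=(w_p\varpi_{i_p}+\varpi_{i_p},\,w_q\varpi_{i_q}-\varpi_{i_q})$ for $p>q$, which is $(\Lambda_{\overline w})_{pq}$. Compatibility with $B_{\overline w}$ was recorded in Section~\ref{sec:wordseed}, and the normalization matches \eqref{eq:normalized-word-seed}.

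\emph{Step 4:} first-step mutations. For each mutable $k$, the mutated module is $M_k'=M\binom{w_{k^+-1}\varpi_{i_k}}{s_{i_k}\varpi_{i_k}}$-type, namely the determinantial module whose minor appears in Proposition~\ref{thm:Tsystem} applied with $v=e$ and $w=w_{k^+-1}$ (resp.\ $w_{k}$). That identity, combined with \eqref{eq:neighbor-minor-normalization}, is exactly the quantum exchange relation dictated by $B_{\overline w}$. The argument of Proposition~\ref{pro:Tlkcommute} (upper bound from \cite[Proposition~10.3.2]{kang2018monoidal}, non-simplicity from positivity of the two-term expansion) gives $\mathfrak d(M_k,M_k')=1$. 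Finally, $M_k'\in\mathscr C_w$ by Lemma~\ref{lem:McircN}(1), since it is a convolution factor of a head lying in $\mathscr C_w$.

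I expect the main obstacle to be Step~3, together with the matching of the exchange relation in Step~4 to the global quiver and sign convention. The paper transports geometric seeds by $(\Lambda,B)\mapsto(-\Lambda,-B)$, and the $R$-matrix degree convention introduces the minus sign in $(\Lambda_{\overline w})_{pq}=-\Lambda(M_p,M_q)$. I would first verify the sign in the two-letter case $\overline w=(i,j)$ with $c_{ij}=-1$, and then argue in general by the weight formula.
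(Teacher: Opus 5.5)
Your overall route is the right one, and it is the route behind the cited result. The paper does not reprove this statement; it imports it from \cite{kang2018monoidal}. There it follows from the admissibility criterion together with a proof that the initial seed admits first-step mutations in all directions. Steps 1--3 are fine; Step 3 is only a convention check against KKKO's computation of $\Lambda$ for determinantial modules. The genuine gap is Step 4, which is where the real work lies.

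You claim that Proposition~\ref{thm:Tsystem} with $v=e$ and $w=w_{k^+-1}$ ``is exactly the quantum exchange relation dictated by $B_{\overline w}$''. This is false in general, and $M\binom{w_{k^+}\varpi_{i_k}}{s_{i_k}\varpi_{i_k}}$ is in general not the first-step mutant.

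\begin{itemize}
\item The neighbour term of that identity is, up to $q$-powers and multiplicities, $\prod_j D_{h_j}$, where $h_j$ is the last $j$-occurrence before $k^+$.
\item The two exchange monomials of column $k$ are $D_{k^+}\prod_{p<k<p^+<k^+}D_p^{-c_{i_p,i_k}}$ and $D_{k^-}\prod_{k<p<k^+<p^+}D_p^{-c_{i_p,i_k}}$, which do not match that neighbour term in general.
\item The first right-hand term carries $D\binom{w_k\varpi_{i_k}}{s_{i_k}\varpi_{i_k}}$, which in general is neither $1$ nor an initial monomial.
\end{itemize}

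Concretely, take $\overline w=(1,2,3,2,1,2)$ from Example~\ref{ex:A3-factorization}, with initial variables $(a,\,ad-b,\,h,\,ae-c,\,c,\,g)$ at $q=1$. Column $4$ of $B_{\overline w}$ has $b_{14}=b_{64}=1$ and $b_{24}=b_{54}=-1$, so the initial exchange relation is
\[
(ae-c)\,x_4'=a\,g+(ad-b)\,c=b\,(ae-c),\qquad\text{so } x_4'=b .
\]
Your identity instead specializes to
\[
(bf-c)(ae-c)=af\cdot g+c\cdot h .
\]
This is the exchange relation of $\mu_2\mathbf s(\overline w)$, which is how the Example uses it. Here $af=D\binom{w_4\varpi_2}{s_2\varpi_2}\big|_{q=1}$ involves the non-initial variable $f$, and $bf-c\neq b$.

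At $k=2$ the identity reads
\[
af\,(ad-b)=a\,(ae-c)+a\,h ,
\]
which is $x_1$ times the true relation $(ad-b)f=(ae-c)+h$. Correspondingly $M\binom{w_4\varpi_2}{s_2\varpi_2}\simeq L(1)\circ L(3)=M_1\circ M_2'$ is a cluster monomial, not the mutant. Note that $\mathfrak d\bigl(M_2,L(1)\circ L(3)\bigr)=1$ still holds. So the $\mathfrak d$-argument you borrow from Proposition~\ref{pro:Tlkcommute} cannot detect the error.

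What is missing is a construction, for every mutable $k$ of $\mathbf s(\overline w)$, of a real simple module $M_k'$ with the following properties:
\begin{itemize}
\item it strongly commutes with all $M_j$, $j\neq k$;
\item it satisfies $\mathfrak d(M_k,M_k')=1$;
\item it fits into the exchange short exact sequence with exactly the two monomials of column $k$.
\end{itemize}
That is the substance of KKKO's first-step mutation result for the initial seed; it does not follow from the $v=e$ T-system alone. That identity is tied to seeds reached by successive mutations along one colour, as in the paper's $\widetilde\mu_l$. Even there, a determinantial module may contain the mutant only as a multiplicity-one factor, which is exactly the point of Theorem~\ref{thm:dividminors}. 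Either cite KKKO's first-step mutation theorem or supply that construction.

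A minor point of logic: the $\KK$-integral identification $\overline{\mathcal A}_q(\mathbf s_q(\overline w))\simeq A_q(\mathfrak n(w))_{\KK}$ is attributed partly to KKKO in the paper, so using it as input is circular. The criterion only needs the $\mathbb Q(q^{1/2})$-version due to Gei{\ss}--Leclerc--Schr\"oer as input.
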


For the category $\mathscr{C}_{w,v}$, we have the following result.

\begin{theorem}[{\cite[Proposition~4.8]{kashiwara2018monoidal},
\cite[Theorem~4.4]{kashiwara2023localizations}}]\label{thm:inCwv}
Let $v\le w$ and let $\overline{w}=(i_1\cdots i_r)$ be a reduced expression of $w$. Then:

\begin{enumerate}
\item The determinantial modules 
\[
M\binom{w_k\varpi_{i_k}}{\overline v_k\varpi_{i_k}}
\]
belong to $\mathscr{C}_{w,v}$.

\item The determinantial modules $M\binom{w\varpi_j}{v\varpi_j}$
form a real commuting family of graded central objects in
$\mathscr C_{w,v}$. In particular, the localized category
\[
\widetilde{\mathscr{C}}_{w,v}
:=
\mathscr{C}_{w,v}
\left[
M\binom{w\varpi_j}{v\varpi_j}^{-1}
\;\middle|\;
j\in I
\right]
\]
is defined, and its graded Grothendieck ring is the corresponding
Ore localization of $K(\mathscr C_{w,v})$. In finite ADE type,
specialization and complexification give
\[
K(\widetilde{\mathscr C}_{w,v})
\otimes_{\mathbb Z[q^{\pm1/2}]}\mathbb C
\simeq \mathbb C[\mathring{\mathcal B}_{v,w}],
\qquad q^{1/2}\longmapsto1,
\]
by \cite[Theorem~2.12]{leclerc2016cluster} and
\cite[Theorem~2.20(ii)(c) and Remarks~2.19, 2.21(ii)]{kashiwara2018monoidal}.
\end{enumerate}
\end{theorem}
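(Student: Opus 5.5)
The plan is to assemble the statement from the cited results, checking only that their hypotheses match our conventions for leftmost subexpressions and minors; no new mutation combinatorics is needed.

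For part (1), membership in $\mathscr C_w$ and in $\mathscr C_{*,v}$ will be handled separately.

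\emph{Membership in $\mathscr C_w$.} Since $\overline v_k\le w_k$ and $\overline v_k$ is realized by a subword of the prefix $(i_1,\dots,i_k)$, I would use the standard factorization of unipotent minors along a length-additive chain:
\[
D\binom{w_k\varpi_{i_k}}{\varpi_{i_k}}
=q^{c}\,D\binom{w_k\varpi_{i_k}}{\overline v_k\varpi_{i_k}}\,D\binom{\overline v_k\varpi_{i_k}}{\varpi_{i_k}}
\quad\text{for some } c\in\ZZ,
\]
which rests on \cite[Section~9]{kang2018monoidal}. Through Theorem~\ref{thm:isoK0Phi}, this identity realizes $M\binom{w_k\varpi_{i_k}}{\overline v_k\varpi_{i_k}}$ as a left convolution factor of the module $M\binom{w_k\varpi_{i_k}}{\varpi_{i_k}}$. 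The latter is the initial seed variable $M_k$ of $\mathscr C_{w_k}\subset\mathscr C_w$, and it is simple, so the product of the two factors is simple. Lemma~\ref{lem:McircN}(1) then gives membership in $\mathscr C_w$.

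\emph{Membership in $\mathscr C_{*,v}$.} Here I would use the dual statement, Lemma~\ref{lem:McircN}(2), after exhibiting the module as a right factor. The plan is to write $v=\overline v_k u$ with lengths adding, which holds by Lemma~\ref{lem:vk}. Then, following \cite[Proposition~4.8]{kashiwara2018monoidal}, I would show that $\mathbf W^*\bigl(M\binom{x\lambda}{y\lambda}\bigr)$ lies in the cone spanned by $\Delta^+\cap y\Delta^+$. This comes from the $\mathbf W^*$-description of the dual PBW/minor support, namely that $D\binom{x\lambda}{y\lambda}$ is annihilated by the right action of root vectors $E_\beta$ with $\beta\in\Delta^+\cap y\Delta^-$. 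The remaining step is to compare $\Delta^+\cap \overline v_k\Delta^+$ with $\Delta^+\cap v\Delta^+$ using the residual factor $u$ and the greedy property of the leftmost subexpression.

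I expect this comparison of cones to be the main obstacle. The naive inclusion of inversion sets goes in the wrong direction: from $v=\overline v_k u$ one gets $\Delta^+\cap v\Delta^+\subset\Delta^+\cap\overline v_k\Delta^+$, so the cone for $\overline v_k$ is the larger one. The leftmost choice must therefore be used genuinely, exactly as in \cite{kashiwara2018monoidal}. My first attempt would be to reduce, via Lemma~\ref{lem:leftmost-prefix}, to the case $k=p_l$, and then propagate the support condition along positions of the same color, where the lower endpoint does not change.

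For part (2), the centrality and realness of $M\binom{w\varpi_j}{v\varpi_j}$ follow from Proposition~\ref{pro:commutsstt}. I would apply it with $s'=w_k$ and $t'=\overline v_k$ against the suffixes, as in the proof of Proposition~\ref{lem:comwvs}, to get strong commutation with every determinantial generator. Following \cite{kashiwara2023localizations}, I would then extend commutation to all simples in $\mathscr C_{w,v}$ through the $R$-matrix degree criterion, which yields graded central objects. The existence of the localization and the identification of its Grothendieck ring as an Ore localization are \cite[Theorem~4.4]{kashiwara2023localizations}. Finally, in finite $ADE$ type the specialization $q^{1/2}\mapsto1$ identifies $K_{\mathrm{cl}}(\widetilde{\mathscr C}_{w,v})$ with Leclerc's realization of $\CC[\mathring{\mathcal B}_{v,w}]$ by \cite[Theorem~2.12]{leclerc2016cluster} together with \cite[Theorem~2.20]{kashiwara2018monoidal}, after matching their minor conventions with ours.
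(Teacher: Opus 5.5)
Your plan of importing the statement from \cite{kashiwara2018monoidal,kashiwara2023localizations} is what the paper does; it gives no proof of its own. But the steps you supply for part (1) contain a false identity and leave the essential point open.

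The factorization $D\binom{w_k\varpi_{i_k}}{\varpi_{i_k}}=q^{c}D\binom{w_k\varpi_{i_k}}{\overline v_k\varpi_{i_k}}D\binom{\overline v_k\varpi_{i_k}}{\varpi_{i_k}}$ already fails in type $A_2$ with $\overline w=(1,2)$, $v=s_2$, $k=2$. The left side specializes to $xy-z$ (Example~\ref{ex:Tsystem-A2}). The right-hand factors have degrees $\alpha_1$ and $\alpha_2$, so their product specializes to a multiple of $xy$; correspondingly, $L(1)\circ L(2)$ has length two. What holds is the head formula $M\binom{w_k\varpi_{i_k}}{\varpi_{i_k}}\simeq M\binom{w_k\varpi_{i_k}}{\overline v_k\varpi_{i_k}}\bigtriangledown M\binom{\overline v_k\varpi_{i_k}}{\varpi_{i_k}}$ of \cite[Theorem~10.3.1]{kang2018monoidal}, as used in Lemma~\ref{lem:vvsiw}. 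So Lemma~\ref{lem:McircN}(1) does not apply as you invoke it. This half is easy to repair: for simple $M,N$, Frobenius reciprocity embeds $M\boxtimes N$ into $e(\alpha,\beta)(M\bigtriangledown N)$, which already gives $\mathbf W(M)\subset\mathbf W(M\bigtriangledown N)$.

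The genuine gap is membership in $\mathscr C_{*,v}$, which is the whole content of \cite[Proposition~4.8]{kashiwara2018monoidal}. You correctly note that the inversion-set inclusion goes the wrong way, but you offer only a tentative plan, and it cannot work as described. Membership in $\mathscr C_{*,v}$ is not controlled by the lower endpoint alone. For $v=s_2s_1$ in type $A_2$ one has $\Delta^+\cap v\Delta^+=\{\alpha_1\}$. Thus $M\binom{s_1\varpi_1}{\varpi_1}=L(1)$ lies in $\mathscr C_{*,v}$, while $M\binom{s_2s_1\varpi_1}{\varpi_1}$, with the same lower endpoint, does not, since its total weight $\alpha_1+\alpha_2$ belongs to $\mathbf W^*$. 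Keeping the lower endpoint fixed while the upper endpoint grows is exactly how membership can be lost. Moreover, the residual $\overline v_k^{-1}v$ can contain $s_{i_k}$, already for $\overline w=(1,2,1)$, $v=s_2s_1$, $k=1$. A proof must use the upper endpoint $w_k$ together with the greedy property of Lemma~\ref{lem:vk}; otherwise you should cite the proposition, as the paper does.

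Part (2) has a related slip. Proposition~\ref{pro:commutsstt} with $s'=w_k$, $t'=\overline v_k$ and the suffixes as $s,t$ gives commutation of the crossed pair $M\binom{w\varpi_j}{\overline v_k\varpi_j}$ and $M\binom{w_k\varpi_{i_k}}{v\varpi_{i_k}}$, not the pair you need. In Proposition~\ref{lem:comwvs} this trick works only because $v^s$ is a prefix of the other lower endpoint, whereas $v$ is not a prefix of $\overline v_k$ when $k<p_m$. Commutation of $M\binom{w\varpi_j}{v\varpi_j}$ with the modules of part (1) is \cite[Theorem~4.10]{kashiwara2018monoidal}, i.e.\ Proposition~\ref{pro:commutewv} at $p=(j,n_j)$. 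Centrality in all of $\mathscr C_{w,v}$ is a separate cited result, not a formal consequence of commuting with this determinantial family.
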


To determine which determinantial modules do not belong to \(\mathscr{C}_{w,v}\), we shall use the following lemma.

\begin{lemma}\label{lem:vvsiw}
Let \(v\leq vs_i\leq w\). Then
\[
M\binom{w\varpi_i}{v\varpi_i}\notin \mathscr{C}_{w,vs_i}.
\]
\end{lemma}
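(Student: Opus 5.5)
The plan is to exhibit one explicit element of $\mathbf W^*\bigl(M\binom{w\varpi_i}{v\varpi_i}\bigr)$ that lies outside $\operatorname{span}_{\mathbb R_{\ge0}}(\Delta^+\cap vs_i\Delta^+)$. Then $M\binom{w\varpi_i}{v\varpi_i}\notin\mathscr C_{*,vs_i}$, and hence it is not in $\mathscr C_{w,vs_i}$. Set $u:=vs_i$, so that $v<u\le w$ with $\ell(u)=\ell(v)+1$. The candidate element is the real root $\beta:=v\alpha_i=v\varpi_i-u\varpi_i$, which is positive because $vs_i>v$.

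First I show that no element of the cone is sent by $u^{-1}$ outside $Q^+$, and then that $\beta$ is. If $\gamma\in\Delta^+\cap u\Delta^+$, then $u^{-1}\gamma\in\Delta^+\subset Q^+$. Since $u^{-1}$ is linear, every element $\gamma'$ of $\operatorname{span}_{\mathbb R_{\ge0}}(\Delta^+\cap u\Delta^+)$ satisfies $u^{-1}\gamma'\in\operatorname{span}_{\mathbb R_{\ge0}}(Q^+)$. On the other hand,
\[
u^{-1}\beta=s_iv^{-1}v\alpha_i=s_i\alpha_i=-\alpha_i,
\]
which has a negative coefficient. Hence $\beta$ is not in the cone. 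This step is routine.

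The main step is to show that $\beta\in\mathbf W^*\bigl(M\binom{w\varpi_i}{v\varpi_i}\bigr)$, that is, $e(\alpha-\beta,\beta)M\binom{w\varpi_i}{v\varpi_i}\neq0$ where $\alpha=v\varpi_i-w\varpi_i$. For this I will use the factorization of determinantial modules along a length-additive chain $v\le u\le w$ (the $\lambda$-version of the results in \cite[Section~10.2]{kang2018monoidal}). It states that $M\binom{w\varpi_i}{v\varpi_i}$ is, up to grading shift, the simple head of
\[
M\binom{w\varpi_i}{u\varpi_i}\circ M\binom{u\varpi_i}{v\varpi_i}.
\]
Both factors are nonzero by \eqref{eq:Dneq0}, since $v\le u\le w$, and the second factor has weight $v\varpi_i-u\varpi_i=\beta$. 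Let $\pi$ be the surjection from this convolution onto $L:=M\binom{w\varpi_i}{v\varpi_i}$. The subspace $1\otimes\bigl(M\binom{w\varpi_i}{u\varpi_i}\boxtimes M\binom{u\varpi_i}{v\varpi_i}\bigr)$ generates the convolution as an $R(\alpha)$-module, and it lies in $e(\alpha-\beta,\beta)$ times the convolution. Its image under $\pi$ is therefore nonzero, because $L\neq0$ and $\pi$ is surjective, and this image lies in $e(\alpha-\beta,\beta)L$. Thus $\beta\in\mathbf W^*(L)$, as required. If the head statement is not available in exactly this form, I would instead use the dual argument with the socle of the reversed product, or argue directly on the minor. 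In the latter case, $e(\alpha-\beta,\beta)L\ne0$ is equivalent to the nonvanishing of the right $\beta$-component of $D\binom{w\varpi_i}{v\varpi_i}$ under the twisted coproduct, and this component contains $D\binom{w\varpi_i}{u\varpi_i}\otimes D\binom{u\varpi_i}{v\varpi_i}$ with a nonzero coefficient.

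I expect the main obstacle to be pinning down the precise citation for this head/quotient factorization of determinantial modules in the weight-$\varpi_i$ (non-regular) setting. The key point to check is that the chain $v\le vs_i\le w$ gives nonzero, correctly ordered factors in the parabolic quotient $W/W_{I\setminus\{i\}}$. Nonvanishing of both factors follows from $v\le u\le w$ via \eqref{eq:Dneq0}. Once $\beta\in\mathbf W^*(L)$ is established, combining it with the cone computation above finishes the proof.
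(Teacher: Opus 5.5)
Your proof is correct. It follows the same overall structure as the paper's proof. Both rest on the head factorization $M\binom{w\varpi_i}{v\varpi_i}\simeq M\binom{w\varpi_i}{vs_i\varpi_i}\bigtriangledown M\binom{vs_i\varpi_i}{v\varpi_i}$, which is \cite[Theorem~10.3.1]{kang2018monoidal} (not Section~10.2). Both finish with the same cone computation $(vs_i)^{-1}(v\alpha_i)=-\alpha_i$.

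Where you differ is in showing that $v\alpha_i\in\mathbf W^*\bigl(M\binom{w\varpi_i}{v\varpi_i}\bigr)$. The paper's route is:
\begin{enumerate}
\item place the two factors in $\mathscr C_{w,vs_i}$ and $\mathscr C_{vs_i,v}$ via \cite[Theorem~4.5(ii)]{kashiwara2018monoidal};
\item deduce that the relevant $\mathbf W^*$ and $\mathbf W$ sets meet only in $0$;
\item invoke \cite[Lemma~2.8]{tingley2016mirkovic} to get $e(\alpha-\beta,\beta)(M_1\circ M_2)=M_1\boxtimes M_2$;
\item check that every proper submodule is killed by this idempotent.
\end{enumerate}
This yields the sharper statement $e(\alpha-\beta,\beta)L\simeq M_1\boxtimes M_2$.

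You only need nonvanishing, and your argument gives it directly. The generating subspace $M_1\boxtimes M_2$ lies in $e(\alpha-\beta,\beta)(M_1\circ M_2)$, so its image in any nonzero quotient is nonzero; this is essentially Frobenius reciprocity. Your route therefore avoids the support-membership result and the Tingley--Webster lemma entirely, and it is shorter. The paper's route buys the full restriction, which is more than the lemma requires.

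The fallbacks you list are not needed, since the head form you want is exactly what Theorem~10.3.1 provides.
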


\begin{proof}
By \cite[Theorem~10.3.1]{kang2018monoidal}, we have
\[
M\binom{w\varpi_i}{v\varpi_i}
=
M\binom{w\varpi_i}{vs_i\varpi_i}
\bigtriangledown
M\binom{vs_i\varpi_i}{v\varpi_i}.
\]
Here $N\bigtriangledown L$ refers to the head of the module $N\circ L$.
By \cite[Theorem~4.5(ii)]{kashiwara2018monoidal}, the two factors belong
to $\mathscr C_{w,vs_i}$ and $\mathscr C_{vs_i,v}$, respectively.
Their defining support conditions therefore give
\[
\mathbf{W}^*\!\left(M\binom{w\varpi_i}{vs_i\varpi_i}\right)
\subset
\operatorname{span}_{\RR_{\geq 0}}(\Delta^+\cap vs_i\Delta^+),
\]
and
\[
\mathbf{W}\!\left(M\binom{vs_i\varpi_i}{v\varpi_i}\right)
\subset
\operatorname{span}_{\RR_{\geq 0}}(\Delta^+\cap vs_i\Delta^-).
\]
In particular,
\[
\mathbf{W}^*\!\left(M\binom{w\varpi_i}{vs_i\varpi_i}\right)
\cap
\mathbf{W}\!\left(M\binom{vs_i\varpi_i}{v\varpi_i}\right)
=0.
\]
It follows from \cite[Lemma~2.8]{tingley2016mirkovic} that
\[
e(vs_i\varpi_i-w\varpi_i,\,v\alpha_i)
\Bigl(
M\binom{w\varpi_i}{vs_i\varpi_i}
\circ
M\binom{vs_i\varpi_i}{v\varpi_i}
\Bigr)
=
M\binom{w\varpi_i}{vs_i\varpi_i}
\boxtimes
M\binom{vs_i\varpi_i}{v\varpi_i}.
\]

Now let \(M\) be a proper submodule of
\[
M\binom{w\varpi_i}{vs_i\varpi_i}
\circ
M\binom{vs_i\varpi_i}{v\varpi_i}.
\]
We claim that
\[
e(vs_i\varpi_i-w\varpi_i,\,v\alpha_i)M=0.
\]
Indeed, if this were not the case, then
\[
e(vs_i\varpi_i-w\varpi_i,\,v\alpha_i)M
=
M\binom{w\varpi_i}{vs_i\varpi_i}
\boxtimes
M\binom{vs_i\varpi_i}{v\varpi_i},
\]
since the latter is simple. This would imply
\[
M=
M\binom{w\varpi_i}{vs_i\varpi_i}
\circ
M\binom{vs_i\varpi_i}{v\varpi_i},
\]
contradicting the assumption that \(M\) is proper. Hence
\[
e(vs_i\varpi_i-w\varpi_i,\,v\alpha_i)
\Bigl(
M\binom{w\varpi_i}{vs_i\varpi_i}
\bigtriangledown
M\binom{vs_i\varpi_i}{v\varpi_i}
\Bigr)
=
M\binom{w\varpi_i}{vs_i\varpi_i}
\boxtimes
M\binom{vs_i\varpi_i}{v\varpi_i}.
\]
It follows that
\[
v\alpha_i\in
\mathbf{W}^*\!\left(M\binom{w\varpi_i}{v\varpi_i}\right).
\]
Since
\[
(vs_i)^{-1}(v\alpha_i)=-\alpha_i\notin Q^+,
\]
whereas $(vs_i)^{-1}$ sends the cone generated by
$\Delta^+\cap vs_i\Delta^+$ into the positive root cone,
we conclude that
\[
M\binom{w\varpi_i}{v\varpi_i}\notin \mathscr{C}_{w,vs_i}.
\]
\end{proof}

The following proposition will be used repeatedly in the rest of the paper.

\begin{proposition}[{\cite[Theorem~4.10]{kashiwara2019laurent}}]\label{pro:commutenotone}
Let $\mathscr T=(N_i)_{i\in[r]}$ be a monoidal seed of
$\mathscr C_w$, let $k$ be a mutable vertex, and let $M$ be a
simple module in $\mathscr C_w$. If $M$ strongly commutes with
$N_j$ for all $j\ne k$, then $M$ is a cluster monomial with respect
to either $\mathscr T$ or $\mu_k\mathscr T$.
\end{proposition}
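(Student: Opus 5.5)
The plan has two stages. First, reduce to the case where $M$ strongly commutes with an entire cluster: either with every $N_j$ of $\mathscr T$, or with every module of $\mu_k\mathscr T$. Second, show that a simple module of $\mathscr C_w$ strongly commuting with a whole monoidal cluster is a cluster monomial in it.

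For the reduction, let $N_k'$ be the mutated module at $k$. Since $\mathscr T$ is a monoidal seed, $\mathfrak d(N_k,N_k')=1$, and there is a short exact sequence
\[
0\to P'\to N_k\circ N_k'\to P\to 0 ,
\]
up to grading shifts. Here $P=N_k\bigtriangledown N_k'$ and $P'$ are the two monomials in $\{N_j\}_{j\ne k}$ given by the exchange matrix. By Lemma~\ref{lem:d-additivity}, $M$ strongly commutes with $P$ and with $P'$, since both are products of pairwise strongly commuting real modules each commuting with $M$.

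Next I would look at $\mathfrak d(M,N_k)$ and $\mathfrak d(M,N_k')$. I would use the standard inequalities for $\Lambda$ of a real simple module against a head $N_k\bigtriangledown N_k'$. These are: $\Lambda(M,N_k\bigtriangledown N_k')\le\Lambda(M,N_k)+\Lambda(M,N_k')$, the analogous one for $\Lambda(\,\cdot\,,M)$, and the equality case when one side commutes. Combined with $\Lambda(M,P)=-\Lambda(P,M)$ and the same for $P'$, they should yield
\[
\min\{\mathfrak d(M,N_k),\mathfrak d(M,N_k')\}=0 .
\]
The mechanism I would try is a degree count. If both $\mathfrak d$'s were positive, the composed $R$-matrix $M\circ N_k\circ N_k'\to N_k\circ N_k'\circ M$ would have degree strictly above $\Lambda(M,P)$ on the head and above $\Lambda(M,P')$ on the socle. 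That contradicts the fact that $M$ commutes with both ends of the sequence, since then the renormalized $R$-matrix restricts to isomorphisms on $P$ and $P'$ with the predicted degrees. This yields the reduction: $M$ strongly commutes with all of $\mathscr T$ or with all of $\mu_k\mathscr T$.

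For the second stage, assume $M$ strongly commutes with a full cluster $\{N_j\}$. I would use the $\Lambda$-invariant vector $g_j:=\Lambda(N_j,M)$, or the $g$-vector of $M$ in the seed. Compatibility $\sum_j\lambda_{ij}b_{jl}=2\delta_{il}$ lets one solve for exponents $a_j$ such that the monomial $X^{\mathbf a}$ has the same $\Lambda$-pairing with every $N_j$ as $M$. Splitting $\mathbf a=\mathbf a^+-\mathbf a^-$, I would form the real simple monomials $N^{\mathbf a^\pm}$. These commute with $M$, so $M\circ N^{\mathbf a^-}$ is simple and commutes with the whole cluster with the same invariants as $N^{\mathbf a^+}$.

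The main obstacle is to conclude $M\circ N^{\mathbf a^-}\simeq N^{\mathbf a^+}$ and then cancel $N^{\mathbf a^-}$ to get $\mathbf a^-=0$. For this I would first use that $K(\mathscr C_w)$ is the quantum cluster algebra (the categorification theorem for $\mathscr C_w$ above). Hence $[M]$ is a Laurent polynomial in the cluster with a unique pointed leading term. Two simple modules strongly commuting with the full seed and having the same leading exponent must coincide, by the unitriangularity of the dual canonical basis against the cluster torus, using Theorem~\ref{thm:isoK0Phi}. Finally, nonnegativity of $\mathbf a$ would follow because $M$ is a genuine simple in $\mathscr C_w$. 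The minimal pointed term of a simple commuting with an acyclic-at-$k$ initial cluster has nonnegative exponents, which I would check with the positivity of Laurent expansions from \cite{kashiwara2019laurent}.
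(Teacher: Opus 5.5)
\textbf{Stage~1 is the gap.} A local $R$-matrix degree count cannot give $\min\{\mathfrak d(M,N_k),\mathfrak d(M,N_k')\}=0$, because every available inequality points the wrong way.

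Put $X=N_k\circ N_k'$, $\pi=\Lambda(M,P)$ and $\pi'=\Lambda(M,P')$.
\begin{enumerate}
\item Passing to a submodule or quotient of the same weight can only lower $\Lambda(M,-)$. Hence $\Lambda(M,X)\ge\max\{\pi,\pi'\}$.
\item Composing $R$-matrices gives $\Lambda(M,X)\le\Lambda(M,N_k)+\Lambda(M,N_k')$.
\item $\mathbf r_{M,X}$ cannot both vanish on $M\circ P'$ and induce zero on $M\circ P\to P\circ M$. Otherwise it would factor through a nonzero map $M\circ P\to P'\circ M$ between simple modules. Since $K(\mathscr C_w)$ is a domain, $[P]$ would then be a power of $q$ times $[P']$, which is false.
\end{enumerate}
Hence $\Lambda(M,X)=\max\{\pi,\pi'\}$, and symmetrically $\Lambda(X,M)=-\min\{\pi,\pi'\}$. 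All the degree count yields is
\[
2\,\mathfrak d(M,N_k)+2\,\mathfrak d(M,N_k')\;\ge\;|\pi-\pi'| .
\]
This lower bound is perfectly compatible with both $\mathfrak d$'s being positive. The composed $R$-matrix is never of degree strictly above both $\pi$ and $\pi'$, so the contradiction you describe never arises.

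Your argument also uses only that $M$ commutes with $P$ and $P'$, not with the remaining $N_j$. The dichotomy is a global property of the cluster structure, not a consequence of a single exchange triangle. The paper does not prove this statement; it quotes Kashiwara--Kim, whose argument rests on the quantum Laurent phenomenon.

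\textbf{Stage~2 also needs repair.}
\begin{enumerate}
\item The matrix $\Lambda$ is often singular. It is skew-symmetric of size $r=\ell(w)$, so it is singular for odd $r$; for $w=s_1$ in type $A_1$ it is $0$. Compatibility constrains only the mutable columns. So prescribing the $\Lambda$-pairings does not determine an exponent vector.
\item Your identification step silently switches to leading exponents ($g$-vectors), which are a different invariant. It then invokes Qin's common triangular basis theorem, which is stronger than the statement being proved.
\item Nonnegativity of the exponents is only asserted, not argued.
\end{enumerate}

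\textbf{A route that works and skips Stage~1.}
\begin{enumerate}
\item Choose $\mathbf a\in\ZZ_{\ge0}^{[r]\setminus\{k\}}$ so that $X^{\mathbf a}[M]$ has no negative exponent away from $k$, in its Laurent expansions in both $\mathscr T$ and $\mu_k\mathscr T$. Then $M\circ N^{\mathbf a}$ is simple, since $M$ commutes with every $N_j$, $j\neq k$.
\item Let $A$ be the quantum affine space on the $X_j$, $j\ne k$. The exchange relation gives $A[X_k^{\pm1}]\cap A[X_k'^{\pm1}]=A[X_k]+A[X_k']$.
\item This space is spanned by cluster monomials of the two seeds, i.e.\ by pairwise distinct real simple classes up to powers of $q^{1/2}$. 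Linear independence of simple classes forces $M\circ N^{\mathbf a}$ to be one of them.
\item Cancel the real simple factors $N_j$ and exclude negative exponents. For a mutable $j$, such a monomial is not Laurent in the seed mutated at $j$. This gives the claim.
\end{enumerate}
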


\begin{definition}
Let \(M\) and \(N\) be simple modules. We say that \(M\) is a \emph{factor} of \(N\) if there exists a simple module \(L\) such that
\[
N \cong q^a M\circ L\qquad\text{for some }a\in\mathbb Z,
\]
and \(M\) and \(L\) strongly commute. In this case, we write \(M\mid N\).
\end{definition}

\begin{lemma}\label{lem:factor}
If a simple module \(M\) is a factor of a simple module \(N\), then
\[
N\in \mathscr{C}_{w,v}\quad \Longrightarrow\quad M\in \mathscr{C}_{w,v}.
\]
Equivalently,
\[
M\notin \mathscr{C}_{w,v}\quad \Longrightarrow\quad N\notin \mathscr{C}_{w,v}.
\]
\end{lemma}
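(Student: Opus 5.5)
The plan is to read the definition of factor as $N\cong q^aM\circ L$ with $L$ simple, and then combine Lemma~\ref{lem:McircN} with a symmetric use of strong commutation. Since $\mathscr C_{w,v}=\mathscr C_w\cap\mathscr C_{*,v}$, it suffices to show separately that $N\in\mathscr C_w$ implies $M\in\mathscr C_w$, and that $N\in\mathscr C_{*,v}$ implies $M\in\mathscr C_{*,v}$. Grading shifts do not affect the sets $\mathbf W(\cdot)$ and $\mathbf W^*(\cdot)$, so we may ignore $q^a$.

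For the first containment, apply Lemma~\ref{lem:McircN}(1) directly to $L':=M\circ L\cong N$. Both $M$ and $L$ are nonzero because they are simple. This gives $M\in\mathscr C_w$.

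For the second containment, Lemma~\ref{lem:McircN}(2) only controls the right-hand factor, so we need $M$ on the right. Here we use that $M$ and $L$ strongly commute. Then $M\circ L$ is simple, and the $R$-matrix $\mathbf r_{M,L}\colon M\circ L\to L\circ M$ is a nonzero homomorphism. It is nonzero by \cite{kang2018symmetric}, since the renormalized $R$-matrix never vanishes. Its source is simple, so it is injective. Both sides have the same dimension, because the graded characters of $M\circ L$ and $L\circ M$ agree up to grading shift by the shuffle formula (their classes commute in $K_0$ up to a power of $q$). Hence $L\circ M\cong N$ up to a grading shift. Applying Lemma~\ref{lem:McircN}(2) to $L\circ M$ then gives $M\in\mathscr C_{*,v}$. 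Together with the first step, this yields $M\in\mathscr C_{w,v}$. The second formulation of the statement is just the contrapositive.

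The only step needing care is the identification $M\circ L\cong L\circ M$ up to shift. If one prefers to avoid the $R$-matrix, note that $M\circ L$ and $L\circ M$ have the same class in $K_0$ up to a power of $q$. Their classes coincide after $q\mapsto 1$, since $K_0(R\text{-}\gmod)$ specializes to the commutative ring $\CC[N]$. A simple class determines the simple module up to shift, so $L\circ M$, which has a simple class, is itself simple and isomorphic to $N$ up to shift. Either route suffices, and I expect no further obstacle.
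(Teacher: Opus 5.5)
Your proof is correct and follows the paper's argument. Lemma~\ref{lem:McircN}(1) gives $M\in\mathscr C_w$; strong commutation gives $M\circ L\cong L\circ M$ up to shift, so Lemma~\ref{lem:McircN}(2) gives $M\in\mathscr C_{*,v}$. Your justification of that isomorphism, via a nonzero $R$-matrix with simple source and equal dimensions, or via equal ungraded classes at $q=1$, is sound and fills a step the paper leaves implicit. One correction: the parenthetical claim that $[M\circ L]$ and $[L\circ M]$ agree up to a power of $q$ is false for general pairs and circular here, but you never need it, since the shuffle formula gives equal total dimensions and the $q=1$ specialization is commutative.
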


\begin{proof}
By Lemma~\ref{lem:McircN}, the relation
\[
N=M\circ L\in \mathscr{C}_w
\]
implies that \(M\in \mathscr{C}_w\).

Since \(M\) and \(L\) strongly commute, we have
\[
M\circ L \cong L\circ M.
\]
Hence, if \(N\in \mathscr{C}_{*,v}\), then Lemma~\ref{lem:McircN} again implies that \(M\in \mathscr{C}_{*,v}\).

Therefore,
\[
M\in \mathscr{C}_w\cap \mathscr{C}_{*,v}
=
\mathscr{C}_{w,v}.
\]
\end{proof}

\subsection{Mutation operators on the initial seed}
For $v\leq w$, let $\overline{v}=(i_{p_1}\cdots i_{p_m})$ be the leftmost subexpression of a reduced expression $\overline{w}=(i_1\cdots i_r)$ of $w$. For each $l\in [m]$, consider the seed
\[
\widetilde{\mathbf{s}}(\overline{v}^{\,l},\overline{w}),
\]
defined in Definition~\ref{def:subwordmutation}. By construction, this seed is obtained from the initial seed $\mathbf{s}(\overline{w})$ in \eqref{eq:initialseed} by a sequence of mutations. We denote by $M_s^l$ the cluster variable at the vertex $s$ of the seed $\widetilde{\mathbf{s}}(\overline{v}^{\,l},\overline{w})$. If $s=(i,d)$, we also write $M_{(i,d)}^l$.

We begin with the seed $\widetilde{\mathbf{s}}(\overline{v}^{\,1},\overline{w})$. Recall that
\(
p_1=(i,d_1), \qquad p_1^{\max}=(i,n_i).
\)
The mutation sequence $\widetilde{\mu}_1$ consists of the mutations $\mu_{(i,k)}$ for
\(
(i,k)\in \bigl(I^1_{i,1}\setminus\{p_1\}\bigr)[-1].
\)
For simplicity, we write $\mu_{(i,k)}$ simply as $\mu_k$. Then
\(
\widetilde{\mu}_1=\mu_{n_i-1}\circ \cdots \circ \mu_{b_1+1}.
\)

\begin{lemma}\label{lem:mutation1}
For every $(i,k)\in \bigl(I^1_{i,1}\setminus\{p_1\}\bigr)[-1]$, one has
\begin{equation}\label{eq:M1ik}
M^1_{(i,k)}
\mid
M\binom{w_{(i,k+1)}\varpi_i}{s_i\varpi_i}
\qquad\text{with multiplicity }1.
\end{equation}
Moreover, every determinantial module in
\[
\mathscr{T}_1\setminus
\left\{
M\binom{w_{p_1}\varpi_{i_{p_1}}}{s_i\varpi_{i_{p_1}}}
\right\}
\]
is a cluster monomial in the seed $\mathbf{s}(\overline{v}^{\,1},\overline{w})$.
\end{lemma}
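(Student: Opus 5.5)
We argue by induction along the mutation sequence $\widetilde\mu_1=\mu_{n_i-1}\circ\cdots\circ\mu_{b_1+1}$, using the families $\mathscr T_{1,k}$ as the monoidal realization of the intermediate seeds. Note that $b_1=d_1-1$, because $p_1$ is the first selected position. The vertex $(i,b_1+k)$ is mutated at the $k$-th step. Before that step, its variable is the initial module $M_{(i,d_1+k-1)}=M\binom{w_{(i,d_1+k-1)}\varpi_i}{\varpi_i}$; this is the $\overline v^{\,0}$-minor, since $\overline v^{\,0}=e$. The claim to prove inductively is the following: after the first $k$ mutations, the cluster variables on the mutated vertices $(i,b_1+1),\dots,(i,b_1+k)$ are $M\binom{w_{(i,d_1+t)}\varpi_i}{s_i\varpi_i}$ for $1\le t\le k$, and the seed is realized by $\mathscr T_{1,k}$ with the reindexing $\Phi^{1,k}$ of \eqref{eq:Phijlk}.

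For the inductive step, we apply Proposition~\ref{pro:Tlkcommute}. By that result, $\mathscr T_{1,k}$ strongly commutes, and the new module $P=M\binom{w_{(i,d_1+k)}\varpi_i}{s_i\varpi_i}$ satisfies $\mathfrak d(P,Q)=1$ with the removed module $Q=M\binom{w_{(i,d_1+k-1)}\varpi_i}{\varpi_i}$. By the recursion \eqref{eq:Tlkrecursion}, $P$ strongly commutes with every other variable of the current seed. Proposition~\ref{pro:commutenotone} then says that $P$ is a cluster monomial in either the current seed or its mutation at the vertex of $Q$. Since $\mathfrak d(P,Q)=1\neq0$, $P$ is not a monomial in the current seed. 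Hence it is a cluster monomial in the mutated seed and contains the new variable with positive exponent.

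The T-system \eqref{eq:Tsystemid} with $l=1$ reads $[P][Q]=q^{c}[\text{known}]+q^{d}[\text{neighbor product}]$. Comparing root weights and exponents then shows that $P$ equals the new variable to the first power, up to shift, which gives the multiplicity-one statement. Equivalently, $M^1_{(i,k)}$ is a factor of $M\binom{w_{(i,k+1)}\varpi_i}{s_i\varpi_i}$, the cofactor being the product of frozen-type minors attached to the deleted vertex. We expect the main obstacle to be this matching of the mutation exchange relation, read off from the quiver $B$ after the previous mutations, with the T-system. Concretely, one must check that the neighbor-color factors in \eqref{eq:Tsystemid}, which use the prefix $w_{p-1}$, are exactly the variables adjacent to the mutated vertex. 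We would do this by induction on $k$ using the standard description of mutating along one color row, as in \cite{geiss2013cluster}. If this identification fails, we would fall back on comparing $\mathbf g$-vectors, or the weights $\wt$, to pin down the exponent as $1$.

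For the second statement, after all $n_i-d_1$ steps we reach $\mathscr T_{1,n_i-d_1}$. Deleting $D_1=\{(i,n_i)\}$ removes exactly $M\binom{w_{(i,n_i)}\varpi_i}{\varpi_i}$. Freezing the neighbors of the deleted vertex does not alter the variables. By \eqref{eq:Tlni-d}, the remaining variables are precisely $\mathscr T_1$ with $M\binom{w_{p_1}\varpi_i}{s_i\varpi_i}$ removed, so each of these is a cluster variable, and in particular a cluster monomial, of $\mathbf s(\overline v^{\,1},\overline w)$.
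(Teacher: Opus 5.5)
Your skeleton matches the paper's: induction along $\widetilde\mu_1$, Proposition~\ref{pro:Tlkcommute} for commutation and for $\mathfrak d(P,Q)=1$, Proposition~\ref{pro:commutenotone}, and $\mathfrak d(P,Q)\neq0$ to rule out the unmutated seed. There is a genuine gap, however: the inductive invariant you carry is false, and your multiplicity-one step rests on it.

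The mutated variable at $(i,d_1+t-1)$ is in general only a factor of $M\binom{w_{(i,d_1+t)}\varpi_i}{s_i\varpi_i}$, not equal to it. So the intermediate seeds are not ``realized by $\mathscr T_{1,k}$''. Example~\ref{ex:A3-factorization} refutes this already at the first step. After $\mu_2$ the new variable is $x_2'=f$, i.e.\ $L(3)$, whereas $M\binom{w_4\varpi_2}{s_2\varpi_2}$ specializes to $af$ and is $L(1)\circ L(3)$.

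Accordingly, \eqref{eq:Tsystemid} is not the exchange relation but the exchange relation multiplied by the cofactor. There it reads $(af)(ad-b)=a(ae-c)+ah$, whereas the exchange relation is $f(ad-b)=(ae-c)+h$. This has three consequences:
\begin{itemize}
\item The matching of neighbor-color factors with quiver neighbors, which you single out as the main obstacle, cannot succeed.
\item ``Comparing root weights and exponents'' proves nothing.
\item Your description of the cofactor as frozen-type minors attached to the deleted vertex is also wrong. Here the cofactor is the initial variable $M_1=L(1)$, which sits at a mutable vertex of the final seed.
\end{itemize}

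The missing idea is additivity. Write $P\simeq (X')^{\circ n}\circ B$ in the mutated seed, where $B$ is a monomial in variables strongly commuting with $Q$. Then $n\geq1$, and Lemma~\ref{lem:d-additivity} gives $1=\mathfrak d(P,Q)=n\,\mathfrak d(X',Q)\geq n$, so $n=1$.

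Correct the invariant to the paper's: each seed variable is a multiplicity-one factor of the module assigned to it by $\Phi^{1,k}$, and every module of $\mathscr T_{1,k}$ is a cluster monomial in the current seed. Two further steps you skip then become necessary.

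First, $P$ strongly commutes with the non-pivot variables only via the inheritance part of Lemma~\ref{lem:d-additivity}, since those variables are now merely factors of modules in $\mathscr T_{1,k}$. You must also check that the factorizations of the old modules of $\mathscr T_{1,k}$ do not involve $Q$, so they remain cluster monomials after the mutation. They cannot involve $Q$, because those modules commute with $P$ while $\mathfrak d(P,Q)=1$.

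Second, for the last assertion it is not enough that deleting $(i,n_i)$ removes $M\binom{w\varpi_i}{\varpi_i}$. You must show that this frozen variable occurs in none of the factorizations of the retained modules in $\widetilde{\mathbf s}(\overline v^{\,1},\overline w)$. The paper gets this from three facts: Lemma~\ref{lem:vvsiw} places $M\binom{w\varpi_i}{\varpi_i}$ outside $\mathscr C_{w,v^1}$, Theorem~\ref{thm:inCwv} puts all of $\mathscr T_1$ inside it, and Lemma~\ref{lem:factor} passes membership to factors.
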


\begin{proof}
If $n_i=d_1$, the mutation sequence is empty. All the retained
determinantial modules coincide with their initial counterparts by
Lemma~\ref{lem:l=l-1}; thus the divisibility assertion is vacuous and
the cluster-monomial assertion follows directly. Assume henceforth
that $n_i>d_1$.

We argue inductively along $\widetilde\mu_1$, keeping track of two
statements simultaneously: every module in the current family
$\mathscr T_{1,t}$ is a cluster monomial in the current ambient seed,
and each seed variable occurs with multiplicity one in the module
assigned to it by $\Phi^{1,t}$. At each step the strong commutation
and multiplicity calculations use Lemma~\ref{lem:d-additivity}.

First consider the initial step, corresponding to the family $\mathscr{T}_{1,1}$. One checks directly that
\[
Y_{1,1}=\{(i,d_1+1)\},
\qquad
T_{1,1}=[r]\setminus\{(i,d_1)\}.
\]
By Proposition~\ref{pro:Tlkcommute}, the module
\[
M\binom{w_{(i,d_1+1)}\varpi_i}{s_i\varpi_i}
\]
strongly commutes with $M_s$ for all $s\neq (i,d_1)$. Hence, by Proposition~\ref{pro:commutenotone}, it is a cluster monomial with respect to either the seed $\mathbf{s}(\overline{w})$ or the seed $\mu_{d_1}\mathbf{s}(\overline{w})$.

On the other hand, Proposition~\ref{pro:Tlkcommute} also shows that
\[
M\binom{w_{(i,d_1+1)}\varpi_i}{s_i\varpi_i}
\]
does not strongly commute with $M_{(i,d_1)}$. Therefore it cannot be a cluster monomial in the seed $\mathbf{s}(\overline{w})$, and hence must be a cluster monomial in the mutated seed $\mu_{d_1}\mathbf{s}(\overline{w})$. It follows that $M^1_{(i,d_1)}$ occurs as a factor.

Let $n$ be the multiplicity of $M^1_{(i,d_1)}$ in this factorization. Then
\[
\mathfrak{d}\left(
M\binom{w_{(i,d_1+1)}\varpi_i}{s_i\varpi_i},
M\binom{w_{(i,d_1)}\varpi_i}{\varpi_i}
\right)
=
n\,\mathfrak{d}\left(
M^1_{(i,d_1)},
M\binom{w_{(i,d_1)}\varpi_i}{\varpi_i}
\right)
\ge n.
\]
By Proposition~\ref{pro:Tlkcommute}, we conclude that $n=1$. Thus
\[
M^1_{(i,d_1)}
\mid
M\binom{w_{(i,d_1+1)}\varpi_i}{s_i\varpi_i}
\]
with multiplicity one.

Moreover, no other determinantial module in $\mathscr{T}_{1,1}$ can contain $M_{(i,d_1)}$ as a factor; otherwise the modules in $\mathscr{T}_{1,1}$ would fail to commute pairwise. Its initial factorization therefore involves only unchanged seed variables. Thus every determinantial module in $\mathscr{T}_{1,1}$ is a cluster monomial, with nonnegative exponents, in $\mu_{d_1}\mathbf{s}(\overline{w})$. Each cluster variable $X_{(j,q)}$ of this seed occurs with multiplicity one in the determinantial module indexed by $\Phi_j^{1,1}(j,q)$.

We now proceed by induction. Suppose that, for some $t\ge 2$, the statement has been proved up to step $t-1$, and that every module in $\mathscr{T}_{1,t-1}$ is a cluster monomial, with nonnegative exponents, in the seed
\[
\mu_{d_1+t-2}\cdots\mu_{d_1}\mathbf{s}(\overline{w}).
\]
We also assume the multiplicity-one divisibility statement indexed
by $\Phi^{1,t-1}$; this is an additional induction invariant, rather
than a reformulation of the cluster-monomial statement.

Consider the family $\mathscr{T}_{1,t-1}$. By \eqref{eq:Tlkrecursion}, we have
\[
\mathscr{T}_{1,t}\setminus
\left\{
M\binom{w_{(i,d_1+t)}\varpi_i}{s_i\varpi_i}
\right\}
=
\mathscr{T}_{1,t-1}\setminus\{M_{(i,d_1+t-1)}\}.
\]
Hence, by the induction hypothesis, every cluster variable of
\[
\mu_{d_1+t-2}\cdots\mu_{d_1}\mathbf{s}(\overline{w})
\]
other than $M_{(i,d_1+t-1)}$ already appears as a factor of a determinantial module in $\mathscr{T}_{1,t}\setminus\{M\binom{w_{(i,d_1+t)}\varpi_i}{s_i\varpi_i}\}$.

Now apply the same argument as in the initial step. Using Proposition~\ref{pro:Tlkcommute}, Proposition~\ref{pro:commutenotone}, and the fact that $M_{(i,d_1+t-1)}$ is a cluster variable of the seed
\[
\mu_{d_1+t-2}\cdots\mu_{d_1}\mathbf{s}(\overline{w}),
\]
we obtain that
\[
M\binom{w_{(i,d_1+t)}\varpi_i}{s_i\varpi_i}
\]
is a cluster monomial in the seed
\[
\mu_{d_1+t-1}\cdots\mu_{d_1}\mathbf{s}(\overline{w}),
\]
and that $M^1_{(i,d_1+t-1)}$ appears in it as a factor with multiplicity one.

By the definition of $\Phi_j^{1,t}$ and equations~\eqref{eq:Tlk-1Ylk} and~\eqref{eq:Phil-1Phil}, it follows that every cluster variable $X_{(j,s)}$ in the seed
\[
\mu_{d_1+t-1}\cdots\mu_{d_1}\mathbf{s}(\overline{w})
\]
is a multiplicity-one factor of a determinantial module in $\mathscr{T}_{1,t}$ indexed by $\Phi_j^{1,t}(j,s)$.

Furthermore, no other determinantial module in $\mathscr{T}_{1,t}$ can contain $M_{(i,d_1+t-1)}$ as a factor; otherwise the modules in $\mathscr{T}_{1,t}$ would fail to commute pairwise. Consequently, every surviving old factorization uses only unchanged seed variables and retains all its exponents. Together with the new minor, this shows that every module in $\mathscr{T}_{1,t}$ is a cluster monomial, with nonnegative exponents, in the seed
\[
\mu_{d_1+t-1}\cdots\mu_{d_1}\mathbf{s}(\overline{w}).
\]

Iterating this argument up to $t=n_i-d_1$, we conclude that every module in $\mathscr{T}_{1,n_i-d_1}$ is a cluster monomial in the seed
\[
\widetilde{\mu}_1\mathbf{s}(\overline{w}).
\]
By \eqref{eq:Tlni-d}, for every $(j,k)\neq (i,n_i)$, the cluster variable $M^1_{(j,k)}$ of $\widetilde{\mu}_1\mathbf{s}(\overline{w})$ is a multiplicity-one factor of the determinantial module in $\mathscr{T}_1$ indexed by $\Phi_j^1(j,k)$. Moreover, every determinantial module in
\[
\mathscr{T}_1\setminus
\left\{
M\binom{w_{p_1}\varpi_{i_{p_1}}}{s_i\varpi_{i_{p_1}}}
\right\}
\]
is a cluster monomial in the seed $\widetilde{\mu}_1\mathbf{s}(\overline{w})$.

Finally, Lemma~\ref{lem:vvsiw} shows that
\[
M\binom{w\varpi_{i_{p_1}}}{\varpi_{i_{p_1}}}\notin \mathscr{C}_{w,v^1}.
\]
Hence this module cannot occur as a factor of any determinantial module in $\mathscr{T}_1$. Therefore every determinantial module in
\[
\mathscr{T}_1\setminus
\left\{
M\binom{w_{p_1}\varpi_{i_{p_1}}}{s_i\varpi_{i_{p_1}}}
\right\}
\]
is already a cluster monomial in the seed $\mathbf{s}(\overline{v}^1,\overline{w})$. This proves the lemma.
\end{proof}

\begin{theorem}\label{thm:dividminors}
For $l\in [m]$ and $j\in I$, let $(j,k)\in I_{j,\le l}$, and write
\[
\Phi_j^l((j,k))=(j,k+s),
\]
where $\Phi_j^l$ is the map defined in~\eqref{eq:Phiil}. Then
\begin{equation}\label{eq:Mjsl}
M_{(j,k)}^l
\mid
M\!\binom{w_{(j,k+s)}\varpi_j}
{\overline{v}^{\,l}_{(j,k+s)}\varpi_j}
\qquad\text{with multiplicity }1.
\end{equation}
Moreover, for every $q\notin \{p_1,\dots,p_l\}$, the determinantial module
\[
M\!\binom{w_q\varpi_{i_q}}{\overline{v}^{\,l}_q\varpi_{i_q}}
\]
is a cluster monomial in the seed $\mathbf{s}(\overline{v}^{\,l},\overline{w})$.
\end{theorem}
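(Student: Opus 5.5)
We argue by induction on $l$, with Lemma~\ref{lem:mutation1} as the base case $l=1$. The induction hypothesis at stage $l-1$ is the statement of the theorem together with the invariant that each variable $M^{l-1}_{(j,k)}$ of $\widetilde{\mathbf s}(\overline v^{\,l-1},\overline w)$, for $(j,k)\in I_{j,\le l-1}$, is a multiplicity-one factor of the module in $\mathscr T_{l-1}$ indexed by $\Phi_j^{l-1}(j,k)$. By Lemma~\ref{lem:l=l-1}, $\mathscr T_{l-1}$ and $\mathscr T_l$ differ only in the color-$i$ segment $I^l_{i,\alpha(i,l)}$ with $i=i_{p_l}$. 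The seed $\widetilde{\mathbf s}(\overline v^{\,l},\overline w)$ is obtained from $\widetilde{\mathbf s}(\overline v^{\,l-1},\overline w)$ by $\widetilde\mu_l=\mu_{(i,n_i-\alpha(i,l))}\circ\cdots\circ\mu_{(i,b_l+1)}$. Under $\Phi_i^{l-1}$, the mutated vertex $(i,b_l+t)$ corresponds to $(i,d_l+t-1)$, and under $\Phi_i^l$ it corresponds to $(i,d_l+t)$, by \eqref{eq:Phil-1Phil}. So we follow the families $\mathscr T_{l,t}$, $t=0,\dots,n_i-d_l$, exactly as in the proof of Lemma~\ref{lem:mutation1}, using the interpolating maps $\Phi^{l,t}$ of \eqref{eq:Phijlk}.

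For the inductive step in $t$, assume every module of $\mathscr T_{l,t-1}$ is a cluster monomial in $\mu_{(i,b_l+t-1)}\cdots\mu_{(i,b_l+1)}\widetilde{\mathbf s}(\overline v^{\,l-1},\overline w)$, with the multiplicity-one invariant indexed by $\Phi^{l,t-1}$. Proposition~\ref{pro:Tlkcommute} then gives three facts: the new module $N_t:=M\binom{w_{(i,d_l+t)}\varpi_i}{v^l\varpi_i}$ strongly commutes with all modules indexed by $T_{l,t}$; $\mathfrak d(N_t,M\binom{w_{(i,d_l+t-1)}\varpi_i}{v^{l-1}\varpi_i})=1$; and $\mathscr T_{l,t}$ mutually strongly commutes.

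Each seed variable other than the one at $(i,b_l+t)$ is a factor of some module indexed by $T_{l,t}$. By Lemma~\ref{lem:d-additivity} and the inheritance statement there, $N_t$ therefore strongly commutes with all seed variables except possibly $M_{(i,b_l+t)}$. Proposition~\ref{pro:commutenotone} then makes $N_t$ a cluster monomial in the current seed or in its mutation at $(i,b_l+t)$. The first option is excluded: $M_{(i,b_l+t)}$ divides $M\binom{w_{(i,d_l+t-1)}\varpi_i}{v^{l-1}\varpi_i}$, and $N_t$ does not commute with that module. Additivity of $\mathfrak d$ then forces the new variable $M^l_{(i,b_l+t)}$ to appear in $N_t$ with multiplicity $n\le 1$, and $n\ge1$ because $N_t$ fails to commute with the old variable. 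The other modules of $\mathscr T_{l,t}$ commute with $N_t$ and do not involve the mutated variable, so their factorizations survive unchanged. This propagates both invariants to $\Phi^{l,t}$.

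At $t=n_i-d_l$ we have $\Phi^{l,n_i-d_l}=\Phi^l$ on the surviving vertices. Equation \eqref{eq:Tlni-d} together with Proposition~\ref{pro:Tlni-d} identifies the resulting family with $\mathscr T_l$ minus the $p_l$-module, which gives \eqref{eq:Mjsl}. For the passage from $\widetilde{\mathbf s}$ to $\mathbf s(\overline v^{\,l},\overline w)$, we must show that no module $M\binom{w_q\varpi_{i_q}}{\overline v^{\,l}_q\varpi_{i_q}}$ with $q\notin\{p_1,\dots,p_l\}$ uses a deleted variable in $D_l$. These deleted variables are the variables at the top vertices $(j,n_j-\ell+1)$, which by the invariant are factors of modules $M\binom{w\varpi_j}{\overline v^{\,l'}\varpi_j}$ at earlier stages. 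Those modules lie outside $\mathscr C_{w,v^l}$, by Lemma~\ref{lem:vvsiw} applied along $v^{l'}\le v^{l'}s_j$. Lemma~\ref{lem:factor} then forbids them from being factors of modules in $\mathscr C_{w,v^l}$, a category that contains all of $\mathscr T_l$ by Theorem~\ref{thm:inCwv}. Freezing neighbours of deleted vertices does not affect the cluster-monomial property.

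The main obstacle is this deletion step. Lemma~\ref{lem:vvsiw} only controls the single most recently removed vertex. For an earlier stage $l'<l$ the deleted variable is itself a prime factor, not the whole module $M\binom{w\varpi_j}{v^{l'-1}\varpi_j}$. I would first try to show that the deleted variable is exactly that frozen determinantial module, using that it sits at the last occurrence $(j,n_j-\ell+1)$ and is obtained by the final mutation of $\widetilde\mu_{l'}$. Failing that, I would show directly that it lies outside $\mathscr C_{*,v^l}$ via the support computation $v^{l'}\alpha_j\in\mathbf W^*$. A secondary subtlety is justifying that the mutation at $(i,b_l+t)$ is still legal, meaning the vertex remains mutable rather than frozen, in the ambient seed $\widetilde{\mathbf s}$; here I would rely on Definition~\ref{def:subwordmutation} performing all mutations before any deletion.
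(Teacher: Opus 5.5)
Your architecture matches the paper's proof: induction on $l$, an inner induction along $\mathscr T_{l,t}$, and the $\mathfrak d=1$ exclusion argument. There is, however, a genuine gap in two places.

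\textbf{Commutation with previously deleted variables.} For $l\ge2$ you apply Proposition~\ref{pro:commutenotone} in the ambient seed $\mu_{(i,b_l+t-1)}\cdots\mu_{(i,b_l+1)}\widetilde{\mathbf s}(\overline v^{\,l-1},\overline w)$ of $\mathscr C_w$. Since all mutations are performed in the full seed, this seed still carries the variables at the vertices of $D_{l-1}$. The proposition requires $N_t$ to strongly commute with these variables as well.

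Your invariant only covers $(j,k)\in I_{j,\le l-1}$. So your claim that every variable other than the pivot is a factor of a module indexed by $T_{l,t}$ has no support at the deleted vertices. In fact it fails there: these variables lie outside $\mathscr C_{w,v^{l-1}}$ (the exclusion statement below), while every module of $\mathscr T_{l,t}$ lies inside it.

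The paper closes this as follows. A vertex deleted at stage $s+1\le l-1$ is never mutated again, so its variable is $M^{s}_{(j,k)}$. By the stage-$s$ invariant, this is a factor of the frozen minor $M\binom{w\varpi_j}{v^s\varpi_j}$. Proposition~\ref{lem:comwvs} shows that these minors strongly commute with $\mathscr T_{l,t}$, and Lemma~\ref{lem:d-additivity} passes the commutation to their factors. You state exactly this divisibility fact in your deletion paragraph, but you do not use it where it is needed.

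\textbf{The deletion step is left open.} As you suspect, the argument as written runs the wrong way. Lemma~\ref{lem:factor} only says that factors of modules in $\mathscr C_{w,v^l}$ lie in $\mathscr C_{w,v^l}$. Being a factor of some module outside that category proves nothing. Your first fallback, that the deleted variable equals the whole frozen minor, is not justified in general.

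The missing idea is to add to the induction the exclusion statement: $M^l_{(j,k)}\notin\mathscr C_{w,v^l}$ whenever $k+\alpha(j,l)>n_j$.
\begin{itemize}
\item Variables deleted at earlier stages inherit it via $\mathscr C_{w,v^l}\subset\mathscr C_{w,v^{l-1}}$.
\item For the newly deleted variable $X=M^l_{(i,n_i-\alpha(i,l)+1)}$, take $F=M\binom{w\varpi_i}{v^{l-1}\varpi_i}$, the member of $\mathscr T_{l,n_i-d_l}$ indexed by $(i,n_i)$. Your inner induction already makes $F$ a cluster monomial in $\widetilde{\mathbf s}(\overline v^{\,l},\overline w)$.
\item Since $F\in\mathscr C_{w,v^{l-1}}$, the stage-$(l-1)$ exclusion keeps the older deleted variables out of $F$.
\item Every retained variable is a factor of a module of $\mathscr T_l\subset\mathscr C_{w,v^l}$ (Theorem~\ref{thm:inCwv}), so it lies in $\mathscr C_{w,v^l}$.
\item But $F\notin\mathscr C_{w,v^l}$ by Lemma~\ref{lem:vvsiw}. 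Because $\mathscr C_{w,v^l}$ is closed under convolution, this forces $X\notin\mathscr C_{w,v^l}$.
\end{itemize}
Lemma~\ref{lem:factor} then excludes all deleted variables from the factorizations of the modules $M\binom{w_q\varpi_{i_q}}{\overline v^{\,l}_q\varpi_{i_q}}$ with $q\notin\{p_1,\dots,p_l\}$.

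Your worry about legality of the mutations is harmless. In $\widetilde{\mathbf s}$ only last occurrences are frozen, and every vertex mutated by $\widetilde\mu_l$ has occurrence number at most $n_i-\alpha(i,l)<n_i$.
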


\begin{proof}
We prove the two assertions of the theorem together with the
exclusion statement \eqref{eq:notincwv-new} by simultaneous induction
on $l$. Thus, at stage $l$, all three statements are available at
stage $l-1$. Within each stage we maintain both the complete
cluster-monomial factorizations and the divisibility statements
indexed by $\Phi^{l,k}$.

\medskip
\noindent
\textit{Step 1: the case $l=1$.}
Let $i=i_{p_1}$. Then
\[
I_i=I_{i,0}^1\sqcup I_{i,1}^1,
\qquad
I_j=I_{j,0}^1 \quad (j\neq i),
\]
and, since $p_1=(i,b_1+1)$, we have $t_1=b_1+1$. More precisely,
\[
I_{i,0}^1=\{(i,k)\in I_i\mid (i,k)<p_1\},
\qquad
I_{j,0}^1=I_j \quad (j\neq i).
\]

For every $q\in I_{i,0}^1\cup I_{j,0}^1$, we have
\[
M_q^1=M_q^0
=
M\!\binom{w_q\varpi_{i_q}}{\overline{v}^{\,0}_q\varpi_{i_q}},
\]
and Lemma~\ref{lem:l=l-1} gives
\[
M_q^1
=
M\!\binom{w_q\varpi_{i_q}}{\overline{v}^{\,1}_q\varpi_{i_q}}.
\]

Next,
\[
I_{i,1}^1\setminus\{p_1\}
=
\{(i,k)\in I_i\mid b_1+2\le k\le n_i\}.
\]
Hence Lemma~\ref{lem:mutation1} yields
\[
M_{(i,k)}^1
\mid
M\!\binom{w_{(i,k+1)}\varpi_i}{\overline{v}^{\,1}_{(i,k+1)}\varpi_i}
\qquad
\text{for }k\in [b_1+1,n_i-1].
\]
The cluster-monomial assertion follows from
Lemma~\ref{lem:mutation1}. The only deleted variable is
$M^1_{(i,n_i)}=M\binom{w\varpi_i}{\varpi_i}$, which is excluded from
$\mathscr C_{w,v^1}$ by Lemma~\ref{lem:vvsiw}. This establishes all
the induction statements for $l=1$.

\medskip
\noindent
\textit{Step 2: the induction step and the complete factorizations.}
Assume that all the induction statements hold for $l-1$, and set
$i=i_{p_l}$.

We first treat the case $j\neq i$. By the induction hypothesis,
\begin{equation}\label{eq:induction-jneqi}
M_{(j,k)}^{\,l-1}
\mid
M\!\binom{w_{(j,k+s)}\varpi_j}
{\overline{v}^{\,l-1}_{(j,k+s)}\varpi_j}
\qquad
\text{for all }(j,k+s)\in I_{j,s}^{\,l-1}\setminus\{(j,t_s)\}.
\end{equation}
Since $j\neq i$, we have $\alpha(j,l)=\alpha(j,l-1)$, and hence
\[
I_{j,s}^{\,l}=I_{j,s}^{\,l-1}
\qquad
\text{for all } s\in [0,\alpha(j,l)].
\]
Together with Lemma~\ref{lem:l=l-1}, this implies
\[
M_{(j,k)}^{\,l}=M_{(j,k)}^{\,l-1}
\mid
M\!\binom{w_{(j,k+s)}\varpi_j}
{\overline{v}^{\,l}_{(j,k+s)}\varpi_j}
\qquad
\text{for all } (j,k+s)\in I_{j,s}^{\,l}\setminus\{(j,t_s)\}.
\]

Now consider the color $i=i_{p_l}$. Note that
\[
d_l=b_l+\alpha(i,l)
\]
and
\[
\bigl(I_{i,\alpha(i,l)}^l\setminus\{p_l\}\bigr)[-\alpha(i,l)]
=
\{(i,b_l+1),\dots,(i,n_i-\alpha(i,l))\}.
\]
Thus the mutation sequence $\widetilde{\mu}_l$ is precisely the sequence of mutations at these vertices.

For $q\in \bigl(I_{i,s}^l\setminus\{(i,t_s)\}\bigr)[-s]$ with $s<\alpha(i,l)$, we therefore have
\[
M_q^l=M_q^{l-1}.
\]
Moreover,
\[
\overline{v}^{\,l-1}_{(i,k+s)}=\overline{v}^{\,l}_{(i,k+s)}
\qquad
\text{for } (i,k+s)\in I_{i,s}^l\setminus\{(i,t_s)\},\ s<\alpha(i,l),
\]
and \(I_{i,s}^l\subset I_{i,s}^{\,l-1}\) by \eqref{eq:Iisl}. Hence the induction hypothesis and Lemma~\ref{lem:l=l-1} give
\[
M_{(i,k)}^l
=
M_{(i,k)}^{\,l-1}
\mid
M\!\binom{w_{(i,k+s)}\varpi_i}
{\overline{v}^{\,l}_{(i,k+s)}\varpi_i}
\qquad
\text{for all } (i,k+s)\in I_{i,s}^l\setminus\{(i,t_s)\},\ s<\alpha(i,l).
\]

If $d_l=n_i$, the new layer and the mutation sequence are empty.
For every retained index $q\notin\{p_1,\ldots,p_l\}$, the relevant
minor is unchanged by Lemma~\ref{lem:l=l-1}, so its complete ambient
cluster-monomial factorization is supplied by induction. The
remaining divisibility statements have already been proved above;
we proceed directly to Step~3. In what follows, assume $d_l<n_i$.

It remains to analyze the new layer corresponding to
$s=\alpha(i,l)$. By \eqref{eq:Tlkrecursion},
\begin{equation}\label{eq:Tl1-rec}
\mathscr{T}_{l,1}\setminus
\left\{
M\!\binom{w_{(i,d_l+1)}\varpi_i}
{\overline{v}^{\,l}_{(i,d_l+1)}\varpi_i}
\right\}
=
\mathscr{T}_{l-1}\setminus
\left\{
M\!\binom{w_{(i,d_l)}\varpi_i}
{\overline{v}^{\,l-1}_{(i,d_l)}\varpi_i}
\right\}.
\end{equation}
Since
\[
(i,d_l)\in I_{i,\alpha(i,l-1)}^{\,l-1},
\qquad
\alpha(i,l-1)=\alpha(i,l)-1,
\]
the induction hypothesis shows that
\[
M_{(i,b_l+1)}^{\,l-1}
\mid
M\!\binom{w_{(i,d_l)}\varpi_i}
{\overline{v}^{\,l-1}_{(i,d_l)}\varpi_i}.
\]

On the other hand, every other cluster variable of
\(\widetilde{\mathbf{s}}(\overline{v}^{\,l-1},\overline{w})\)
appears as a factor of some determinantial module in the family
\[
\mathscr{T}_{l-1}\cup
\left\{
M\!\binom{w\varpi_j}{v^s\varpi_j}
\;\middle|\;
j\in I,\ 0\leq s<l-1
\right\}.
\]
Indeed, let us consider a cluster variable \(M_{(j,k)}^{\,l-1}\) such that
\[
k+\alpha(j,l-1)>n_j.
\]
By the induction hypothesis, this module is not a factor of any determinantial module in \(\mathscr{T}_{l-1}\). For such a pair \((j,k)\), there exists an integer \(0\leq s<l-1\) such that
\[
k+\alpha(j,s)=n_j.
\]
Since the cluster variable is not mutated after it is deleted at
stage $s+1$, we have
\[
M_{(j,k)}^{\,l-1}=M_{(j,k)}^{\,s}.
\]
Applying the induction hypothesis again (or the initial-seed identity
\eqref{eq:initialseed} when $s=0$), we see that
\(M_{(j,k)}^{\,s}\) appears as a factor of
\[
M\binom{w\varpi_j}{v^s\varpi_j}.
\]

Consequently, every cluster variable \(M_q^{\,l-1}\) of
\(\widetilde{\mathbf{s}}(\overline{v}^{\,l-1},\overline{w})\)
is a factor of some determinantial module in the family
\[
\mathscr{T}_{l-1}
\cup
\left\{
M\binom{w\varpi_j}{v^s\varpi_j}
\;\middle|\;
j\in I,\ 0\leq s<l-1
\right\}.
\]
Hence, by \eqref{eq:Tl1-rec}, the only cluster variable of
\(\widetilde{\mathbf{s}}(\overline{v}^{\,l-1},\overline{w})\)
which may fail to be represented in
\[
\mathscr{T}_{l,1}\cup
\left\{
M\!\binom{w\varpi_j}{v^s\varpi_j}
\;\middle|\;
j\in I,\ 0\leq s<l-1
\right\}
\]
is \(M_{(i,b_l+1)}^{\,l-1}\).

By Proposition~\ref{lem:comwvs}, the new determinantial module
strongly commutes with the displayed family. Strong commutation
passes to its real convolution factors by
Lemma~\ref{lem:d-additivity}. Consequently,
Proposition~\ref{pro:commutenotone} implies that
\[
M\!\binom{w_{(i,d_l+1)}\varpi_i}
{\overline{v}^{\,l}_{(i,d_l+1)}\varpi_i}
\]
is a cluster monomial in the cluster variables of either
\[
\widetilde{\mathbf{s}}(\overline{v}^{\,l-1},\overline{w})
\qquad\text{or}\qquad
\mu_{(i,b_l+1)}\widetilde{\mathbf{s}}(\overline{v}^{\,l-1},\overline{w}).
\]
If it were a cluster monomial in
\(\widetilde{\mathbf{s}}(\overline{v}^{\,l-1},\overline{w})\),
then, by the induction hypothesis,
\[
M\!\binom{w_{(i,d_l)}\varpi_i}
{\overline{v}^{\,l-1}_{(i,d_l)}\varpi_i}
\]
would also be a cluster monomial in that seed. This would imply
\[
\mathfrak d\!\left(
M\!\binom{w_{(i,d_l+1)}\varpi_i}
{\overline{v}^{\,l}_{(i,d_l+1)}\varpi_i},
\,
M\!\binom{w_{(i,d_l)}\varpi_i}
{\overline{v}^{\,l-1}_{(i,d_l)}\varpi_i}
\right)=0,
\]
contrary to \eqref{eq:dMlml-1}. Therefore
\[
M_{(i,b_l+1)}^{\,l}
\mid
M\!\binom{w_{(i,d_l+1)}\varpi_i}
{\overline{v}^{\,l}_{(i,d_l+1)}\varpi_i},
\]
and the multiplicity-one assertion follows from
\eqref{eq:dMlml-1} and Lemma~\ref{lem:d-additivity}.

We spell out why the complete factorization invariant also survives
each inner step. Denote the old pivot by $X$, its mutation by $X'$, and
the replaced and new determinantial modules by $Q$ and $P$, respectively.
The induction hypothesis gives $Q\simeq X\circ A$, where $A$ is a
convolution monomial in the other current seed variables. The coverage
argument above shows that $P$ strongly commutes with those variables.
Thus Lemma~\ref{lem:d-additivity} and \eqref{eq:dMlml-1} give
\[
1=\mathfrak d(P,Q)=\mathfrak d(P,X).
\]
Every surviving old determinantial module whose factorization is part
of the induction invariant strongly commutes with $P$. Its exponent of
$X$ must therefore be zero, by Lemma~\ref{lem:d-additivity}.
Its entire nonnegative factorization consequently remains unchanged
after replacing $X$ by $X'$, including the multiplicity of its assigned
seed variable. The assigned minor for an unchanged variable survives
by \eqref{eq:Tlkrecursion} and the definition of $\Phi^{l,k}$.
Finally, if $P\simeq (X')^{\circ n}\circ B$ is its factorization in the
mutated seed, then $n\geq1$ and
\[
1=\mathfrak d(P,X)=n\,\mathfrak d(X',X),
\]
so $n=1$. This preserves both the complete cluster-monomial
factorizations and the assigned multiplicity-one divisibility statements.

By \eqref{eq:Phil-1Phil}, every cluster variable $X_{(i,s)}$ with
$(i,s)\in I_{i,\le l-1}$ in
\[
\mu_{(i,b_l+1)}\widetilde{\mathbf{s}}(\overline{v}^{\,l-1},\overline{w})
\]
is a multiplicity-one factor of the determinantial module in \(\mathscr{T}_{l,1}\) indexed by \(\Phi_i^{l,1}(i,s)\).

Repeating the same argument for the successive mutations
$\mu_{(i,b_l+2)},\ldots,\mu_{(i,n_i-\alpha(i,l))}$, we obtain, for
$1\le k\le n_i-d_l$,
\begin{equation}\label{eq:Mliblk-new}
M_{(i,b_l+k)}^{\,l}
\mid
M\!\binom{w_{(i,d_l+k)}\varpi_i}
{\overline{v}^{\,l}_{(i,d_l+k)}\varpi_i},
\end{equation}
and that
\[
M\!\binom{w_{(i,d_l+k)}\varpi_i}
{\overline{v}^{\,l}_{(i,d_l+k)}\varpi_i}
\]
is a cluster monomial in the seed
\[
\mu_{(i,b_l+k)}\cdots\mu_{(i,b_l+1)}
\widetilde{\mathbf{s}}(\overline{v}^{\,l-1},\overline{w}).
\]

Furthermore, no determinantial module in
\[
\mathscr{T}_{l,k}\setminus
\left\{
M\!\binom{w_{p_t}\varpi_{i_{p_t}}}
{\overline{v}^{\,l-1}_{p_t}\varpi_{i_{p_t}}}
\;\middle|\;
t\in [l-1]
\right\}
\]
can contain \(M_{(i,b_l+s)}^{\,l-1}\) as a factor for any \(1\le s\le k\). Otherwise it would fail to strongly commute with
\[
M\!\binom{w_{(i,d_l+s)}\varpi_i}
{\overline{v}^{\,l}_{(i,d_l+s)}\varpi_i},
\]
which also belongs to \(\mathscr{T}_{l,k}\), contradicting Proposition~\ref{pro:Tlkcommute}. Hence every such determinantial module is a cluster monomial in the seed
\[
\mu_{(i,b_l+k)}\cdots\mu_{(i,b_l+1)}
\widetilde{\mathbf{s}}(\overline{v}^{\,l-1},\overline{w}).
\]

Taking \(k=n_i-d_l\) and using \eqref{eq:Tlni-d}, we deduce that
\[
M\!\binom{w_q\varpi_{i_q}}{\overline{v}^{\,l}_q\varpi_{i_q}}
\]
is a cluster monomial in \(\widetilde{\mathbf{s}}(\overline{v}^{\,l},\overline{w})\) for every \(q\notin \{p_1,\dots,p_l\}\).

Finally, since \(\Phi_i^{l,n_i-d_l}=\Phi_i^l\) on \(I_{i,\le l}\) and \(\Phi_j^l=\Phi_j^{l-1}\) for \(j\neq i\), equations \eqref{eq:Mliblk-new} and \eqref{eq:Tlni-d} show that, for every \((j,k)\in I_{j,\le l}\), the determinantial module in \(\mathscr{T}_l\) indexed by \(\Phi_j^l(j,k)\) contains \(M_{(j,k)}^l\) as a multiplicity-one factor. This proves \eqref{eq:Mjsl}.

\medskip
\noindent
\textit{Step 3: exclusion from \(\mathscr{C}_{w,v^l}\).}
We next prove that
\begin{equation}\label{eq:notincwv-new}
M_{(i,k)}^l\notin \mathscr{C}_{w,v^l}
\qquad
\text{whenever } k+\alpha(i,l)>n_i.
\end{equation}

The case $l=1$ was included in Step~1: the only deleted variable is
$M_{(i,n_i)}^1$, and
\[
M_{(i,n_i)}^1=M\!\binom{w\varpi_i}{\varpi_i}.
\]
Hence \eqref{eq:notincwv-new} follows from Lemma~\ref{lem:vvsiw}.

Use the simultaneous induction hypothesis at $l-1$, and set
$i=i_{p_l}$. If $j\ne i$, then $\alpha(j,l)=\alpha(j,l-1)$, so
\[
M_{(j,k)}^l=M_{(j,k)}^{\,l-1}
\qquad
\text{whenever } k+\alpha(j,l)>n_j.
\]
Similarly, if \(j=i\) and \(k+\alpha(i,l)>n_i\) with \(k>n_i-\alpha(i,l)+1\), then
\[
M_{(i,k)}^l=M_{(i,k)}^{\,l-1},
\]
and hence these variables do not belong to \(\mathscr{C}_{w,v^l}\), since
\[
\mathscr{C}_{w,v^l}\subset \mathscr{C}_{w,v^{l-1}}.
\]

It remains to treat the boundary term
\[
M_{(i,n_i-\alpha(i,l)+1)}^l=M_{(i,n_i-\alpha(i,l)+1)}^{l-1}.
\]
By the divisibility assertion at stage $l-1$, this module is a factor of
\[
M\!\binom{w\varpi_i}
{\overline{v}^{\,l-1}_{(i,n_i)}\varpi_i}.
\]
Note that the determinantial module
\[
M\!\binom{w\varpi_i}{\overline{v}^{\,l-1}_{(i,n_i)}\varpi_i}
\]
belongs to \(\mathscr{T}_{l,n_i-d_l}\), and therefore is a cluster monomial in the seed
\(\widetilde{\mathbf{s}}(\overline{v}^{\,l},\overline{w})\).
Since
\[
M\!\binom{w\varpi_i}{\overline{v}^{\,l-1}_{(i,n_i)}\varpi_i}\in \mathscr{C}_{w,v^{l-1}},
\]
the induction hypothesis \eqref{eq:notincwv-new} implies that all of its factors are of the form \(M^l_{(j,k)}\) with \(k+\alpha(j,l-1)\le n_j\) for every \(j\in I\).

On the other hand, for every \((j,k)\) with \(k+\alpha(j,l)\le n_j\), the cluster variable \(M_{(j,k)}^l\) is a factor of some determinantial module
\[
M\!\binom{w_t\varpi_{i_t}}{\overline{v}^{\,l}_t\varpi_{i_t}}
\in \mathscr{C}_{w,v^l},
\qquad
t=\Phi^l(j,k).
\]
Hence all such \(M_{(j,k)}^l\) belong to \(\mathscr{C}_{w,v^l}\).

The factors of $v^{l-1}$ following the last occurrence $(i,n_i)$
have colors different from $i$ and hence fix $\varpi_i$. Therefore
$\overline v^{\,l-1}_{(i,n_i)}\varpi_i=v^{l-1}\varpi_i$.
Since $v^l=v^{l-1}s_i$, Lemma~\ref{lem:vvsiw} shows that
\[
M\!\binom{w\varpi_i}
{\overline{v}^{\,l-1}_{(i,n_i)}\varpi_i}
\notin \mathscr{C}_{w,v^l}.
\]
Therefore at least one of its factors must lie outside \(\mathscr{C}_{w,v^l}\), and the preceding discussion shows that the only possibility is
\[
M_{(i,n_i-\alpha(i,l)+1)}^l.
\]
Thus
\[
M_{(i,n_i-\alpha(i,l)+1)}^l\notin \mathscr{C}_{w,v^l},
\]
which completes the proof of \eqref{eq:notincwv-new}.

\medskip
\noindent
\textit{Step 4: conclusion.}
By Theorem~\ref{thm:inCwv}, we have
\[
M\!\binom{w_q\varpi_{i_q}}{\overline{v}^{\,l}_q\varpi_{i_q}}
\in \mathscr{C}_{w,v^l}
\qquad
\text{for all } q\notin \{p_1,\dots,p_l\}.
\]
Since the cluster variables removed in passing from
\(\widetilde{\mathbf{s}}(\overline{v}^{\,l},\overline{w})\)
to
\(\mathbf{s}(\overline{v}^{\,l},\overline{w})\)
are precisely those excluded by \eqref{eq:notincwv-new}, it follows that every such determinantial module is in fact a cluster monomial in the seed
\[
\mathbf{s}(\overline{v}^{\,l},\overline{w}).
\]
This proves the theorem.
\end{proof}

\begin{remark}
    For any $l \in [m]$, it is straightforward to verify that the vertex set of the seed $\mathbf{s}(\overline{v}^l,\overline{w})$ coincides with $\bigsqcup_{i\in I} I_{i,\leq l}$. Hence, $\Phi^{\ell(v)}$ induces a bijection from the vertex set $J$ of the seed $\mathbf{s}(\overline{v},\overline{w})$ to the set $[r]\setminus\{p_1,\dots,p_m\}$.
\end{remark}
\begin{theorem}\label{thm:clustercat}
Every cluster variable of $\mathbf s(\overline v,\overline w)$ is
realized, with the quantum-seed normalization, by a real self-dual
simple module in $\mathscr C_{w,v}$. The corresponding family is
stable under all seed mutations, and every normalized quantum
cluster monomial is represented by a real self-dual simple module.
In particular,
\[
\overline{\mathcal A}_q(\mathbf s(\overline v,\overline w))
\subset K(\mathscr C_{w,v}),
\]
where the bar indicates that frozen variables are not inverted.
\end{theorem}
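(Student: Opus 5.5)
The natural route is to show that the initial variables $M^m_k$ ($k\in J$) of $\mathbf s(\overline v,\overline w)$ form a \emph{monoidal seed} in $\mathscr C_{w,v}$ in the sense of Kang--Kashiwara--Kim--Oh \cite{kang2018monoidal}, and that this seed admits successive mutations. The general theory of monoidal seeds then gives every conclusion at once.

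\textbf{Step 1: the initial family.} Fix $l=m$. By Theorem~\ref{thm:dividminors}, each $M^m_k$ is a multiplicity-one factor of a determinantial module in $\mathscr T_m$, and these modules lie in $\mathscr C_{w,v}$ by Theorem~\ref{thm:inCwv}. Lemma~\ref{lem:factor} then places each $M^m_k$ in $\mathscr C_{w,v}$. Each $M^m_k$ is also a cluster variable of the seed $\widetilde{\mathbf s}(\overline v,\overline w)$, which is obtained from $\mathbf s(\overline w)$ by mutations. Since $\mathscr C_w$ is a monoidal categorification of $A_q(\mathfrak n(w))$, every cluster variable of that seed is a real self-dual simple module up to normalization. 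Moreover, the cluster variables of a single seed mutually strongly commute, with $\Lambda$-matrix equal to $-\Lambda(M^m_p,M^m_q)$. Restricting to the undeleted vertices therefore produces a real, mutually strongly commuting family in $\mathscr C_{w,v}$ with exactly the compatible pair $(\Lambda_{\overline v,\overline w},B_{\overline v,\overline w})$ of Definition~\ref{def:subwordmutation}.

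\textbf{Step 2: mutations stay inside $\mathscr C_{w,v}$.} Let $u$ be a retained mutable vertex, so that $u$ is not adjacent to any deleted vertex. The exchange relation at $u$ in the full seed only involves neighbours of $u$, all of which are retained. In $\mathscr C_w$ the mutated variable $\mu_u(M_u)$ is a real simple module, namely the head of the appropriate convolution, and it satisfies $\mathfrak d(M_u,\mu_u M_u)=1$. This module is a simple subquotient of $M_u\circ(\text{product of neighbours})$, all of whose factors lie in $\mathscr C_{w,v}$. Since $\mathscr C_{w,v}$ is closed under subquotients and convolution, $\mu_u(M_u)\in\mathscr C_{w,v}$.

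Freezing the neighbours of deleted vertices ensures that the restricted seed is again of the same shape. We can therefore iterate, inducting on the length of the mutation sequence. At each stage, the mutated seed of $\mathbf s(\overline v,\overline w)$ is the restriction of a seed of $\mathscr C_w$ obtained by the same sequence of mutations, where frozen vertices are never mutated. All its variables lie in $\mathscr C_{w,v}$ by the subquotient argument. Hence the family is stable under all seed mutations, which is exactly the admissibility condition for a monoidal seed in $\mathscr C_{w,v}$. I expect the main obstacle to be this compatibility: one must check that mutating after deletion agrees with mutating in the full seed before deletion. It holds because retained mutable vertices have $\widetilde b_{du}=0$ for every deleted $d$, and that vanishing is preserved under mutations at retained vertices. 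Everything else is formal.

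\textbf{Step 3: conclusion.} Cluster monomials of a given seed are convolution products of mutually strongly commuting real simples. Such a product is real simple, and after the standard $q$-normalization it is self-dual. Taking classes under $\Phi$ sends each cluster variable and each cluster monomial into $K(\mathscr C_{w,v})$, since $K(\mathscr C_{w,v})$ is a subring. Because frozen variables are not inverted, the algebra generated is $\overline{\mathcal A}_q(\mathbf s(\overline v,\overline w))$, which gives $\overline{\mathcal A}_q(\mathbf s(\overline v,\overline w))\subset K(\mathscr C_{w,v})$.
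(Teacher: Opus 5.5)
Your overall structure matches the paper's. Step 1 is exactly its argument via Theorem~\ref{thm:dividminors}, Theorem~\ref{thm:inCwv} and Lemma~\ref{lem:factor}. Your observation that $\widetilde b_{du}=0$ is preserved by mutation at retained vertices is also exactly how the paper lifts retained mutations to an ambient seed of $\mathscr C_w$.

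\textbf{The gap is in Step~2.} You claim that $\mu_u(M_u)$ is a simple subquotient of $M_u\circ(\text{product of neighbours})$. This is false on weights alone. The exchange sequence gives $\mathrm{wt}(\mu_uM_u)=\mathrm{wt}(P)-\mathrm{wt}(M_u)$, where $P$ is one of the two neighbour monomials. Every subquotient of $M_u\circ N$, however, has weight $\mathrm{wt}(M_u)+\mathrm{wt}(N)$.

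A concrete instance: take $\overline w=(1,2,1)$ and mutate at vertex $1$. Then $M_1=L(1)$ and $\mu_1M_1=L(2)$. The module $L(1)\circ L(2)$ has the two frozen modules $M_2$ and $M_3$ as composition factors. But $L(2)$, of weight $\alpha_2$, cannot be a subquotient of any $L(1)\circ N$.

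The exchange relation puts $\mu_uM_u$ in the position of a convolution \emph{factor} of an object of $\mathscr C_{w,v}$, not a subquotient of one. Closure of $\mathscr C_{w,v}$ under subquotients, extensions and convolution says nothing about convolution factors. So the step you called ``formal'' is the one that carries the content, while the deletion/mutation compatibility you flagged is the easy part.

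\textbf{The fix is a one-sided cancellation, which is what the paper uses.}
\begin{enumerate}
\item Take the ambient exchange sequence $0\to q^aP\to M_u\circ M_u'\to q^bQ\to 0$. By the zero-row property, $P$ and $Q$ are monomials in retained variables, so they lie in $\mathscr C_{w,v}$ by induction.
\item Closure under convolution and extensions then gives $M_u\circ M_u'\in\mathscr C_{w,v}$.
\item Lemma~\ref{lem:McircN}(2), namely $\mathbf W^*(N)\subset\mathbf W^*(M\circ N)$, yields $M_u'\in\mathscr C_{*,v}$.
\item Membership $M_u'\in\mathscr C_w$ comes from the ambient categorification. Together these give $M_u'\in\mathscr C_{w,v}$.
\end{enumerate}
Since $M_u$ and $M_u'$ do not commute, you must use the product with $M_u'$ on the right to get the $\mathscr C_{*,v}$ condition. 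With this replacement, your induction and Step~3 go through as written.
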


\begin{proof}
If $v=e$, the statement is the ambient monoidal categorification.
Assume $\ell(v)>0$.
    By Theorem~\ref{thm:dividminors}, each simple module
    $M^{\ell(v)}_k$ in the initial seed satisfies
    \[
        M^{\ell(v)}_k \mid 
        M\binom{w_{\Phi^{\ell(v)}(k)}\varpi_{i_{\Phi^{\ell(v)}(k)}}}{\overline{v}_{\Phi^{\ell(v)}(k)}\varpi_{i_{\Phi^{\ell(v)}(k)}}}.
    \]
    By Theorem~\ref{thm:inCwv}, for all $s\in [r]$, the determinantial module 
    $M\binom{w_{s}\varpi_{i_s}}{\overline{v}_s\varpi_{i_s}}$
    belongs to $\mathscr{C}_{w,v}$. 
    Hence, by Lemma~\ref{lem:factor}, we obtain 
    $M^{\ell(v)}_k \in \mathscr{C}_{w,v}$. 
    Therefore, all cluster variables in the initial seed 
    lie in $\mathscr{C}_{w,v}$.

Let $D$ be the deleted vertex set and $J_{\mathrm{uf}}$ the mutable
vertex set of the retained seed. By construction,
$b_{dk}=0$ for $d\in D$ and $k\in J_{\mathrm{uf}}$.
The matrix-mutation formula shows that these equalities remain
true after mutation at any $k\in J_{\mathrm{uf}}$: every correction
term in a deleted row contains $b_{dk}=0$. Thus every allowed
mutation sequence lifts to the ambient monoidal seed in
$\mathscr C_w$, with identical exchange monomials.

Suppose that the current seed modules $(X_j)$ belong to
$\mathscr C_{w,v}$, and mutate at $k$. Ambient monoidal
categorification supplies a real self-dual simple module
$X'_k\in\mathscr C_w$ and an exchange short exact sequence
\[
0\longrightarrow q^a P\longrightarrow X_k\circ X'_k
\longrightarrow q^b Q\longrightarrow0,
\]
where $P,Q$ are convolution monomials in the other retained seed
modules (after a possible common grading shift).
Since $\mathscr C_{w,v}$ is closed under convolution products and
extensions, $X_k\circ X'_k\in\mathscr C_{w,v}$.
Lemma~\ref{lem:McircN}(2) now gives
$X'_k\in\mathscr C_{*,v}$. Together with its ambient membership in
$\mathscr C_w$, this yields $X'_k\in\mathscr C_{w,v}$.

Induction proves the assertion for every seed. Pairwise strong
commutation, reality, and the exchange short exact sequences are
inherited from the ambient monoidal categorification. Normalizing
the convolution products therefore realizes all quantum cluster
monomials as asserted.
\end{proof}

\begin{example}[A nontrivial factorization in type $A_3$]
\label{ex:A3-factorization}
Take $w=w_0$ in type $A_3$, with
$\overline w=(1,2,3,2,1,2)$, and let $v=s_2$.
The leftmost subexpression selects $p_1=2$, so
$\widetilde\mu_1=\mu_4\circ\mu_2$ in positional labels.
Notice that $\overline v_1=e$, whereas $v^1=s_2$; this also
illustrates the distinction between absolute prefixes and stages.
To display the calculation, specialize at $q=1$ and write
\[
n=\begin{pmatrix}
1&a&b&c\\0&1&d&e\\0&0&1&f\\0&0&0&1
\end{pmatrix},\qquad
h:=adf-ae-bf+c,\qquad g:=be-cd.
\]
The initial variables are
\[
(x_1,x_2,x_3,x_4,x_5,x_6)=(a,ad-b,h,ae-c,c,g).
\]
The two exchange relations are
\[
(ad-b)x'_2=(ae-c)+h,\qquad
(ae-c)x'_4=afg+ch.
\]
Consequently $x'_2=f$ and $x'_4=bf-c$.
Deleting vertex $6=(2,3)$ and freezing its remaining mutable
neighbors gives
\[
(y_1,y_2,y_3,y_4,y_5)=(a,f,h,bf-c,c),\qquad
J_{\mathrm{fr}}=\{3,4,5\},\quad J_{\mathrm{uf}}=\{1,2\}.
\]
The restricted exchange matrix, with rows $1,\ldots,5$ and
columns $1,2$, is
\[
B=\begin{pmatrix}
0&0\\0&0\\-1&0\\1&1\\0&-1
\end{pmatrix}.
\]
Put
$D_t^{\mathrm{cl}}:=
D\!\binom{w_t\varpi_{i_t}}{\overline v_t\varpi_{i_t}}\big|_{q=1}$.
The map $\Phi=\Phi^1$ and all its assigned factorizations are
\[
\begin{array}{c|ccccc}
k&1&2&3&4&5\\\hline
\Phi(k)&1&4&3&6&5\\
D_{\Phi(k)}^{\mathrm{cl}}&y_1&y_1y_2&y_3&y_4&y_5
\end{array}
\]
In particular, $w_4=s_1s_2s_3s_2$ gives the ordinary minor
with row set $\{1,3\}$ and column set $\{2,4\}$:
\[
D_4^{\mathrm{cl}}
=\det\begin{pmatrix}a&c\\0&f\end{pmatrix}
=y_1y_2.
\]
Thus the variable at vertex $2$ is a multiplicity-one factor of
its assigned minor, although that minor is not itself a seed
variable. At the categorical level, $y_1$ and $y_2$ correspond
to the simple-root modules $L(1)$ and $L(3)$, respectively,
where $L(i)$ is the self-dual simple $R(\alpha_i)$-module.
These modules strongly commute, so the factorization in
Theorem~\ref{thm:dividminors} reads
\[
M\!\binom{w_4\varpi_2}{s_2\varpi_2}
\simeq L(1)\circ L(3)
\]
up to grading shift. The other rows give single determinantial
factors. This illustrates both the nontrivial factorization and
the distinguished multiplicity-one assignment.
\end{example}

\subsection{Finite ADE type}

Throughout this subsection, the Cartan datum is of finite ADE type.
We first make the comparison with Leclerc's conventions explicit.
Write $(v^{\mathrm L},w^{\mathrm L})=(v^{-1},w^{-1})$ for the
pair used in \cite{leclerc2016cluster}. The transpose
anti-automorphism gives the corresponding identification of flag
varieties, $gB_+\mapsto B_-g^{\mathrm T}$, and sends our
Richardson stratum to Leclerc's stratum for
$(v^{\mathrm L},w^{\mathrm L})$.
We write
\[
\mathcal C^{\mathrm{pre}}_{v,w}
:=\mathcal C^{\mathrm L}_{v^{-1},w^{-1}}
\]
for his Frobenius category of Dynkin preprojective-algebra
modules, with this transported indexing. It is distinct from
the quiver Hecke category $\mathscr C_{w,v}$.
When comparing exchange matrices, all seeds are written in the
same quiver convention as Section~\ref{sec:wordseed}; if a cited
geometric construction uses the opposite convention, its entire
exchange matrix is replaced by its negative. This uniform change
does not alter the classical exchange relations.

For $\overline w=(i_1,\ldots,i_r)$, Leclerc's displayed reduced
word is $(i_r,\ldots,i_1)$ for $w^{\mathrm L}$; its subscript
$j$ counts from the right-hand end. The reversal of our leftmost
subexpression for $v$ is his rightmost subexpression for
$v^{\mathrm L}$. His partial products therefore satisfy
\[
w^{\mathrm L}_{(j)}=s_{i_j}\cdots s_{i_1}=w_j^{-1},
\qquad
v^{\mathrm L}_{(j)}=\overline v_j^{-1}.
\]
Indeed, the recursion in \cite[Section~4.8.3]{leclerc2016cluster}
becomes the greedy recursion of Lemma~\ref{lem:vk} after taking
inverses. By our matrix-coefficient convention and
\cite[Section~2.2]{leclerc2016cluster}, we consequently have
\begin{equation}\label{eq:Ddelta}
\begin{aligned}
D\binom{w_j\varpi_{i_j}}{\overline v_j\varpi_{i_j}}\bigg|_{q=1}
&=\Delta_{\overline v_j\varpi_{i_j},\,w_j\varpi_{i_j}}\\
&=\Delta_{(v^{\mathrm L}_{(j)})^{-1}\varpi_{i_j},\,
          (w^{\mathrm L}_{(j)})^{-1}\varpi_{i_j}}.
\end{aligned}
\end{equation}
The last expression is precisely the minor in Leclerc's
Corollary~4.4. The inverses in that expression refer to his partial
products, not to ours. For instance, in the coordinates of
Example~\ref{ex:Tsystem-A2},
$D\binom{s_1s_2\varpi_2}{\varpi_2}|_{q=1}=xy-z$, whereas
$\Delta_{\varpi_2,(s_1s_2)^{-1}\varpi_2}=y$.

We will use the following factorization convention throughout this
subsection. Let
\[
R_w:=K_{\mathrm{cl}}(\mathscr C_w)\simeq\mathbb C[N(w)]
\]
be realized inside $\mathbb C[N]$ through the specialized ambient
cluster variables. In the transported Leclerc convention this is
$\mathbb C[N]^{N^{\prime}(w^{\mathrm L})}$, by
\cite[Section~4.5]{leclerc2016cluster}. The product decomposition
of \cite[Proposition~2.1(a)]{leclerc2016cluster} identifies
$\mathbb C[N]$ with a polynomial extension of $R_w$.
Both rings are factorial and have only nonzero scalar units;
in particular, a prime of $R_w$ remains prime in $\mathbb C[N]$.
Every mutable variable of an ambient seed is irreducible in
$R_w$ by \cite[Theorem~3.1]{geiss2013factorial}. The ambient frozen
variables are unchanged under mutation, and their irreducibility
follows from \cite[Corollary~4.4]{leclerc2016cluster} applied with
$v^{\mathrm L}=e$. Thus all ambient seed variables are prime in
both rings. All subsequent factorizations are taken before
localization; frozen variables are not units at this stage.

\begin{theorem}\label{thm:menard}
In finite ADE type, Leclerc's seed of
$\mathbb C[\mathring{\mathcal B}_{v,w}]$ coincides with M\'enard's
seed $\mathbf s(\overline v,\overline w)$, after adopting the
common global quiver convention of Section~\ref{sec:wordseed}.
\end{theorem}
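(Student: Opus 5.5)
We compare the two seeds through their specialized cluster variables inside the factorial ring $\mathbb C[N]$; the exchange matrices are handled afterwards. Leclerc's initial variables are, by \cite[Corollary~4.4]{leclerc2016cluster}, the irreducible factors of the minors $\Delta_{(v^{\mathrm L}_{(j)})^{-1}\varpi_{i_j},(w^{\mathrm L}_{(j)})^{-1}\varpi_{i_j}}$. By \eqref{eq:Ddelta} these minors are exactly the specializations $D^{\mathrm{cl}}_j$ of our determinantial minors $D\binom{w_j\varpi_{i_j}}{\overline v_j\varpi_{i_j}}$. Here $j$ runs over the positions not in $\{p_1,\dots,p_m\}$. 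Leclerc further assigns to each such $j$ a distinguished new factor, namely the factor not occurring in earlier minors of the same color.

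On our side, Theorem~\ref{thm:dividminors} shows two things for each $l$: every $D^{\mathrm{cl}}_q$ with $q\notin\{p_t\}$ is a monomial in the specialized variables $y_k$ of $\mathbf s(\overline v,\overline w)$, and $y_k$ divides $D^{\mathrm{cl}}_{\Phi(k)}$ exactly once. Each $y_k$ is a cluster variable of an ambient seed of $\mathscr C_w$, since it is obtained by mutations from $\mathbf s(\overline w)$. By the factoriality discussion preceding the theorem, each $y_k$ is therefore prime in $R_w$ and in $\mathbb C[N]$.

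\textbf{Step 1: matching the variables.} The monomial expressions of the $D^{\mathrm{cl}}_q$ in the $y_k$ are prime factorizations. The uniqueness of factorization in $\mathbb C[N]$, whose only units are the nonzero scalars, then shows that the sets of irreducible factors on the two sides coincide. We then match the labels by induction along positions of each color, comparing the sets of prime factors of $D^{\mathrm{cl}}_{(i,k)}$ and $D^{\mathrm{cl}}_{(i,k')}$ for earlier $k'$. The goal is to show that Leclerc's new factor at position $\Phi(k)$ is exactly $y_k$. For this we use the multiplicity-one assignment through $\Phi$, together with the observation that $y_k$ does not divide earlier minors of the same color. This observation follows from the inductive structure of $\Phi^{l,k}$ and from strong commutation arguments as in Proposition~\ref{pro:Tlkcommute}. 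The result is a bijection of initial clusters compatible with $\Phi$. Frozenness also matches: Leclerc's frozen variables are the factors of the minors for $(w\varpi_j,v\varpi_j)$, and ours are the frozen vertices of the restricted seed. Step~3 of Theorem~\ref{thm:dividminors} identifies which vertices are deleted.

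\textbf{Step 2: matching exchange matrices.} This is the main obstacle. For a seed of a cluster algebra of geometric type in a factorial ring, with a full-rank exchange matrix or enough frozen variables, the exchange matrix is determined by the cluster, since it is recovered from the exchange relations. We apply this to both seeds inside $\mathbb C[\mathring{\mathcal B}_{v,w}]$, using Theorem~\ref{thm:classical-upper-richardson} and Leclerc's theorem, which give the same ambient upper cluster algebra. For each mutable $k$, Menard's seed gives $y_ky_k'=$ sum of two coprime monomials. We must show that Leclerc's mutation at the same vertex yields the same $y_k'$, or equivalently that the binomial is the same.

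Our first approach is the following. Both mutations lift to the ambient $\mathscr C_w$ seed; for our seed this is proved in Theorem~\ref{thm:clustercat}, and for Leclerc's it holds through his construction as a restriction of the GLS seed. In the ambient cluster algebra $\mathcal A(\overline w)$, a cluster together with one of its variables determines the mutated variable uniquely, because cluster variables of a seed determine the seed up to the exchange matrix. The exchange matrix is determined whenever the principal-plus-frozen part has full rank, and the ambient word seed has full rank. Transporting back along the common cluster then shows that the two exchange matrices agree in mutable columns. Arrows between frozen vertices are discarded by convention, so this suffices.

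A fallback, if rank or uniqueness issues arise, is to compare the two exchange polynomials directly via the $T$-system identities of Proposition~\ref{pro:Tsystemmk}, specialized at $q=1$. Finally, we must check that the convention flip described before the theorem only negates the whole matrix, so that the comparison is carried out in the global quiver convention of Section~\ref{sec:wordseed}.
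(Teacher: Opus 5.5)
\textbf{Step 1 (matching variables).} This is essentially the paper's matching of variables. The retained variables are specializations of ambient cluster variables, hence prime in $\mathbb C[N]$. By Theorem~\ref{thm:dividminors} they are exactly the distinct irreducible factors of the minors whose factors, by \cite[Corollary~4.4]{leclerc2016cluster}, are Leclerc's variables. There are three problems with what you wrote around this.
\begin{enumerate}
\item Unique factorization only gives $y_k=c_k\,y^{\mathrm{Lec}}_{\sigma(k)}$ with $c_k\in\mathbb C^\times$, and you never fix the scalars. The paper does this by noting that both sides are cluster characters of rigid preprojective modules, hence elements of the dual semicanonical basis $\mathcal S^*$. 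Linear independence then forces $c_k=1$.
\item Your frozen-set matching is only asserted. Step~3 of Theorem~\ref{thm:dividminors} tells you which vertices are \emph{deleted}. It does not show that the retained frozen vertices of $\mathbf s(\overline v,\overline w)$, including those frozen because they were adjacent to deleted vertices, are exactly the irreducible factors of the $\Delta_{v\varpi_j,w\varpi_j}$.
\item The ``new factor'' labeling is not needed for the statement. The non-divisibility claim it relies on is not established anywhere.
\end{enumerate}

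\textbf{Step 2 (matching exchange matrices) has a genuine gap.}
\begin{enumerate}
\item Leclerc's seed is not constructed as a freezing or deletion of a GLS seed. His maximal rigid object is built inside $\mathcal C^{\mathrm{pre}}_{v,w}$, and his exchange sequences are approximations computed in that subcategory. Nothing in his construction exhibits it as the restriction of a reachable GLS seed whose ambient exchange relations restrict to his. That statement is essentially M\'enard's theorem, i.e.\ part of what is being proved.
\item Even granting two ambient lifts, they share only the retained part of the cluster. ``A cluster determines its seed'' concerns seeds of one mutation class with the same full cluster, so it does not apply here.
\item The principle that a cluster in a factorial ring determines the exchange matrix is false across different cluster structures. $(\mathbf x,B)$ and $(\mathbf x,-B)$, or flipping signs on one connected component of the mutable part, give identical exchange binomials, clusters and (upper) cluster algebras. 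So even knowing that all mutated variables agree would recover columns only up to sign, and you have no argument that they agree.
\item The $T$-system fallback is not an argument. Those identities relate minors, whereas Leclerc's exchange matrix is defined from the Gabriel quiver of the endomorphism algebra of his maximal rigid object.
\end{enumerate}

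\textbf{The missing idea is categorical.} By \cite[Theorem~6.8 and Propositions~7.25--7.26]{menard2022cluster}, the retained ambient summands form a basic maximal rigid object $T_{\mathrm{Men}}=\bigoplus_l M_l$ of $\mathcal C^{\mathrm{pre}}_{v,w}$. Its endomorphism quiver and projective-injective summands give M\'enard's exchange matrix and frozen set. The exact equality $\varphi_{M_l}=\varphi_{L_{\sigma(l)}}$ then forces $M_l\simeq L_{\sigma(l)}$. The reason is that a rigid module has an open orbit in the irreducible component indexing its dual semicanonical basis element, and two open orbits in an irreducible variety coincide. Hence $T_{\mathrm{Men}}\simeq T_{\mathrm{Lec}}$, and the exchange matrices and frozen sets agree at once. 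This is also why the scalar normalization in Step~1 is not optional.
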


\begin{proof}
For $v=e$, both seeds are the standard initial seed, so assume
$\ell(v)>0$. Put $J=[r]\setminus\{p_1,\ldots,p_{\ell(v)}\}$ and
\[
P:=\prod_{l\in J}
\Delta_{\overline v_l\varpi_{i_l},\,w_l\varpi_{i_l}}.
\]
By
\cite[Corollary~4.4]{leclerc2016cluster}, the distinct irreducible
factors of $P$ in $\mathbb C[N]$, up to nonzero scalar multiples,
are precisely the $\ell(w)-\ell(v)$ variables of Leclerc's
initial cluster, including its frozen variables.

Let \(\{x_l\mid l\in J\}\) be the retained variables of
\(\mathbf s(\overline v,\overline w)\), and write
\[
    \{y_1,\ldots,y_m\},
    \qquad
    m:=\ell(w)-\ell(v)=|J|,
\]
for the variables of Leclerc's initial cluster. Each \(x_l\) is
the specialization of a variable in an ambient seed of
\(K(\mathscr C_w)\), and hence is prime in \(\mathbb C[N]\) by the
preceding discussion. Moreover,
Theorem~\ref{thm:dividminors} and \eqref{eq:Ddelta} imply that
\(x_l\) divides \(P\).

By \cite[Corollary~4.4]{leclerc2016cluster}, the elements
\(y_1,\ldots,y_m\), considered up to multiplication by nonzero
scalars, are precisely the distinct irreducible factors of \(P\).
It follows that, for every \(l\in J\), there are a unique index
\(\sigma(l)\in[m]\) and a unique scalar
\(c_l\in\mathbb C^\times\) such that
\begin{equation}\label{eq:Menard-Leclerc-up-to-scalar}
    x_l=c_l y_{\sigma(l)}.
\end{equation}
The variables \(x_l\) are pairwise nonassociate, since they belong
to a single cluster and are algebraically independent. Hence
\(\sigma\) is injective. Since both variable sets have \(m\)
elements, it is a bijection
\[
    \sigma\colon J\xrightarrow{\sim}[m].
\]

We now show that every scalar \(c_l\) in
\eqref{eq:Menard-Leclerc-up-to-scalar} is equal to \(1\). Let
\(\varphi_M\in\mathbb C[N]\) denote the cluster character of a
preprojective module \(M\). Under the fixed realization inside
\(\mathbb C[N]\), every specialized ambient cluster variable is
equal, with its prescribed normalization, to the cluster character
of the corresponding reachable rigid module.

Indeed, for the standard ambient initial seed, this is the exact
generalized-minor identity of
\cite[Equation~(29)]{leclerc2016cluster}. Suppose that, at an
ambient seed associated with
\[
    T=\bigoplus_i T_i,
\]
the cluster variables are exactly \(x_i=\varphi_{T_i}\). If the seed
is mutated at \(k\), let \(T_k^*\) be the mutated summand and let
\(E_k,E_k'\) be the middle terms of the corresponding exchange
sequences. The cluster-character multiplication formula gives
\[
    \varphi_{T_k}\varphi_{T_k^*}
    =
    \varphi_{E_k}+\varphi_{E_k'}.
\]
Since cluster characters are multiplicative on direct sums, the two
terms on the right are precisely the two monomials in the classical
cluster exchange relation. Thus this identity is
\[
    x_k\varphi_{T_k^*}
    =
    \prod_i x_i^{[b_{ik}]_+}
    +
    \prod_i x_i^{[-b_{ik}]_+}.
\]
Because \(\mathbb C[N]\) is a domain and
\(x_k=\varphi_{T_k}\neq0\), the mutated variable is exactly
\(\varphi_{T_k^*}\). Induction along an ambient mutation sequence
therefore shows that, for every \(l\in J\),
\[
    x_l=\varphi_{M_l}
\]
for some reachable indecomposable rigid module \(M_l\). This is an
exact equality of regular functions, rather than an equality up to
a scalar.

Likewise, Leclerc's seed is defined from a basic maximal rigid
object
\[
    T_{\mathrm{Lec}}
    =\bigoplus_{a=1}^m L_a,
\]
and its variables are exactly
\[
    y_a=\varphi_{L_a}.
\]
By \cite[Section~4.6]{leclerc2016cluster}, if \(M\) is rigid, then
\(\varphi_M\) is an element of the dual semicanonical basis
\(\mathcal S^*\) of \(\mathbb C[N]\). Consequently,
\[
    x_l=\varphi_{M_l},
    \qquad
    y_{\sigma(l)}=\varphi_{L_{\sigma(l)}}
\]
are elements of the same basis \(\mathcal S^*\).

If these were distinct basis elements, then
\eqref{eq:Menard-Leclerc-up-to-scalar} would give the nontrivial
linear relation
\[
    x_l-c_l y_{\sigma(l)}=0
\]
between two distinct elements of \(\mathcal S^*\), contradicting
the linear independence of this basis. Hence
\[
    x_l=y_{\sigma(l)}.
\]
Substituting this equality into
\eqref{eq:Menard-Leclerc-up-to-scalar}, and using
\(y_{\sigma(l)}\neq0\), gives
\[
    c_l=1.
\]
Thus the two variable sets agree exactly, with their prescribed
normalizations.

It remains to compare the remaining seed data. By
\cite[Theorem~6.8 and Propositions~7.25--7.26]{menard2022cluster},
the retained ambient summands form the basic maximal rigid object
\[
    T_{\mathrm{Men}}
    =\bigoplus_{l\in J}M_l
\]
of \(\mathcal C^{\mathrm{pre}}_{v,w}\); its endomorphism quiver and
projective-injective summands yield M\'enard's exchange matrix and
frozen set.

The equality of normalized variables proved above gives
\[
    \varphi_{M_l}
    =
    \varphi_{L_{\sigma(l)}}
    \qquad(l\in J).
\]
A rigid module has an open orbit in the irreducible component whose
dual semicanonical basis element is its cluster character. Hence
\(M_l\) and \(L_{\sigma(l)}\) have open orbits in the same
irreducible component. Any two nonempty open subsets of an
irreducible variety intersect, whereas two distinct orbits are
disjoint. Their orbits must therefore coincide, and hence
\[
    M_l\simeq L_{\sigma(l)}
    \qquad(l\in J).
\]
It follows that
\[
    T_{\mathrm{Men}}\simeq T_{\mathrm{Lec}}.
\]
Their endomorphism quivers and projective-injective summands
therefore coincide. This identifies the exchange matrices and
frozen sets, up to the prescribed relabeling
\(\Phi^{\ell(v)}\).
\end{proof}

Next we identify the localized quantum Grothendieck ring. The proof
uses positive Laurent expansions in the ambient category
$\mathscr C_w$ and the classical Richardson realization. We first
recall the equality of the lower and upper quantum cluster algebras.

\begin{lemma}\label{lem:uppercluster}
Assume finite ADE type. For any $v\leq w$, with frozen variables
inverted and coefficients in $\KK=\mathbb Z[q^{\pm1/2}]$, one has
\[
\cA_q(\overline v,\overline w)=U_q(\overline v,\overline w).
\]
\end{lemma}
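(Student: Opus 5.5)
We need A_q = U_q for the seed s(v̄,w̄) in finite ADE type, with frozen variables inverted. The cleanest route is a known general criterion for quantum cluster algebras. The standard results are: for an acyclic seed, or more generally a locally acyclic / full-rank seed with a "maximal green" or "reddening" sequence, A = U; quantum versions are due to Berenstein–Fomin–Zelevinsky (acyclic, coprime) and Goodearl–Yakimov (quantum nilpotent algebras). The plan is to reduce to the classical statement plus a rank/coprimality condition and then quantize.

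The first step is to verify that the exchange matrix $B_{\overline v,\overline w}$ has full rank. For the word seed $B_{\overline w}$ this holds because $(\Lambda_{\overline w},B_{\overline w})$ is compatible. Deletion and freezing preserve compatibility, as checked in Definition~\ref{def:subwordmutation}, so the restricted pair $(\Lambda_{\overline v,\overline w},B_{\overline v,\overline w})$ is compatible. Hence $B$ has full column rank, and by Berenstein–Zelevinsky the quantum upper bound equals the quantum upper cluster algebra. This lets us work with upper bounds rather than the full intersection.

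Next we invoke the classical equality. In finite type, by Theorem~\ref{thm:menard} the seed coincides with Leclerc's seed. Classically, $\CC[\mathring{\mathcal B}_{v,w}]=\mathcal A=U$ follows from the classical upper realization Theorem~\ref{thm:classical-upper-richardson} combined with known results that Richardson cluster algebras are locally acyclic (Muller–Speyer for positroid/Richardson cases; Galashin–Lam–Sherman-Bennett–Speyer for braid varieties), and local acyclicity gives $\mathcal A=U$. For the quantum statement, I would use the quantum analogue of local acyclicity, namely Muller's covering argument, which works verbatim over quantum tori with coefficients $\KK$. Alternatively, Goodearl–Yakimov's theorem on quantum Richardson / double Bruhat cells gives $\cA_q=U_q$ over $\KK$ for symmetric finite type.

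The main obstacle is matching integral forms over $\KK=\mathbb Z[q^{\pm1/2}]$ rather than over a field: quantum local acyclicity, like the Goodearl–Yakimov result, is typically proved over $\mathbb Q(q^{1/2})$. I would handle this by first proving equality over $\mathbb Q(q^{1/2})$. Then, given $x\in U_q$ over $\KK$, I would write $x\in\cA_q\otimes\mathbb Q(q^{1/2})$ and use that cluster monomials multiplied by inverted frozen monomials, expanded in the initial quantum torus, have leading terms that are unitriangular in a dominance order. Comparing coefficients in the $\KK$-lattice of the torus then shows the coefficients of $x$ in this spanning set lie in $\KK$. The alternative, if this fails, is to specialize via Theorem~\ref{thm:clustercat}: $\overline{\cA}_q\subset K(\mathscr C_{w,v})$, whose simple-class basis gives the needed integrality.
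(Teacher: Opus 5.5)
There is a genuine gap, and it sits exactly where the paper's proof does its work. Local acyclicity, and hence $\mathcal A=\mathcal U$ via Muller, is a property of a mutation class of seeds, not of the ring $\CC[\mathring{\mathcal B}_{v,w}]$. A ring can carry inequivalent cluster structures, so Theorem~\ref{thm:classical-upper-richardson} does not tell you which mutation class $\mathbf s(\overline v,\overline w)$ lies in.

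The results you cite prove local acyclicity for the seeds constructed in those papers: Grassmannian positroid varieties (Muller--Speyer) and type $A$ braid varieties. Theorem~\ref{thm:menard} only moves you from M\'enard's seed to Leclerc's, which is no closer to those seeds. What is missing is a comparison theorem, valid in all finite $ADE$ types, placing M\'enard's seed in the mutation class of a braid-variety seed.

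The paper supplies this with \cite[Theorem~10.1]{casals2025cluster}. M\'enard's cluster structure coincides with that of a left-inductive weave $\mathfrak W$, so the exchange type of $\mathbf s(\overline v,\overline w)$ is $Q_{\mathfrak W}$. The paper then applies \cite[Corollary~7.16]{casals2025cluster}, which gives $\cA_q=U_q$ for every compatible quantization of that exchange type. That result holds directly over $\mathbb Z[q^{\pm1/2}]$ with frozen variables inverted.

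Your Goodearl--Yakimov alternative has the same defect. Their equalities concern quantum Schubert cells and double Bruhat cells with their own seeds, so a mutation-class comparison with $\mathbf s(\overline v,\overline w)$ would still be needed. Your full-rank/upper-bound step is correct but contributes nothing toward $\cA_q=U_q$, since the seed is not acyclic in general.

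The quantum and integral part is also not under control. You assert that Muller's covering argument ``works verbatim'' over $\KK$, but it is phrased via covers of a spectrum and needs a genuine noncommutative replacement. In the next paragraph you treat integrality over $\KK$ as the main obstacle, which is inconsistent with that assertion.

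Your unitriangularity patch assumes that cluster monomials times inverse frozen monomials span $\cA_q$. Outside finite cluster type, which is the generic situation here, they do not. So there are no ``coefficients of $x$ in this spanning set'' to compare. The real issue is the saturation $\cA_q=(\cA_q\otimes\mathbb Q(q^{1/2}))\cap\mathcal T$ inside the $\KK$-form of the quantum torus, and you never address it.

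The fallback through Theorem~\ref{thm:clustercat} is circular. The simple classes lie in $K(\widetilde{\mathscr C}_{w,v})$. The only way the paper shows $K(\widetilde{\mathscr C}_{w,v})\subset\cA_q$ is the sandwich $\cA_q\subset K(\widetilde{\mathscr C}_{w,v})\subset U_q$ in Theorem~\ref{thm:categorification}, combined with the very lemma you are proving.
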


\begin{proof}
By \cite[Theorem~10.1]{casals2025cluster}, after the
flag-variety identification and the change of word indexing used
above, M\'enard's cluster structure coincides with the cluster
structure associated with a left-inductive weave
\(\mathfrak W\) for the corresponding braid variety. Hence, after
relabeling and adopting the common global quiver convention of
Section~\ref{sec:wordseed}, the exchange type of
\(\mathbf s(\overline v,\overline w)\) is represented by the
weave quiver \(Q_{\mathfrak W}\).

By \cite[Corollary~7.16]{casals2025cluster}, every quantum cluster
algebra whose exchange type is represented by
\(Q_{\mathfrak W}\) is equal to its corresponding quantum upper
cluster algebra. This result applies to any compatible quantum
seed of this exchange type: it depends only on the mutation class
of the mutable principal part of the exchange matrix and does not
require a prescribed compatible skew-symmetric form or
compatibility diagonal. Its coefficient convention is
\(\mathbb Z[q^{\pm1/2}]\), with the frozen variables inverted,
which is precisely the convention used here. Therefore
\[
    \cA_q(\overline v,\overline w)
    =
    U_q(\overline v,\overline w).
\]
\end{proof}

\begin{lemma}[Positive Laurent support under specialization]
\label{lem:positive-specialization}
Let $\widehat{\mathbf t}$ be a quantum seed of the ambient
quantum unipotent algebra $K(\mathscr C_w)$, with vertex set
$\widehat J$, and let $J\subseteq\widehat J$.
Write $X_k$ for its normalized quantum variables and
$x_k:=X_k|_{q^{1/2}=1}$ for the corresponding classical variables.
For a self-dual simple module $L\in\mathscr C_w$, put
$z_L:=\Phi([L])$. If, in the classical ambient function field,
\[
z_L\big|_{q^{1/2}=1}\in
\mathbb C[x_j^{\pm1}\mid j\in J],
\]
then $z_L$ belongs to the quantum subtorus generated by
$\{X_j^{\pm1}\mid j\in J\}$ over $\KK$.
\end{lemma}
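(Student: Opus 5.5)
We would use the quantum Laurent positivity of \cite[Lemma~2.3]{kashiwara2019laurent}. It states that for the ambient monoidal seed $\widehat{\mathbf t}$ in $\mathscr C_w$, the class of any self-dual simple module $L\in\mathscr C_w$ expands in the quantum torus of $\widehat{\mathbf t}$ as
\[
z_L=\sum_{\mathbf a\in\ZZ^{\widehat J}} c_{\mathbf a}(q^{1/2})\,X^{\mathbf a},
\qquad c_{\mathbf a}(q^{1/2})\in\ZZ_{\geq0}[q^{\pm1/2}],
\]
a finite sum over normalized Laurent monomials. This is the only categorical input needed. Everything else is a comparison of supports after specializing.

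The first step is to specialize at $q^{1/2}=1$. The normalized monomials $X^{\mathbf a}$ specialize to the commutative monomials $x^{\mathbf a}$, which are linearly independent in the classical torus $\mathbb C[x_k^{\pm1}\mid k\in\widehat J]$. This holds because the ambient classical cluster variables of one seed are algebraically independent. Hence
\[
z_L\big|_{q^{1/2}=1}=\sum_{\mathbf a}c_{\mathbf a}(1)\,x^{\mathbf a}.
\]
Each $c_{\mathbf a}(1)$ is a sum of nonnegative integers, namely the coefficients of $c_{\mathbf a}$. So $c_{\mathbf a}(1)=0$ forces $c_{\mathbf a}=0$, and no cancellation can occur between distinct monomials.

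The second step uses the hypothesis that $z_L|_{q^{1/2}=1}$ lies in the subtorus $\mathbb C[x_j^{\pm1}\mid j\in J]$. The Laurent expansion in the full torus is unique, so every $\mathbf a$ with $c_{\mathbf a}(1)\neq0$ has $a_k=0$ for all $k\notin J$. By the previous step, every $\mathbf a$ with $c_{\mathbf a}\neq0$ then has support in $J$.

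Finally, for $\mathbf a$ supported in $J$, the normalized monomial $X^{\mathbf a}$ is a power of $q^{1/2}$ times an ordered monomial in $X_j^{\pm1}$, $j\in J$. Therefore it lies in the $\KK$-subtorus generated by these elements, and so does $z_L$.

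The main obstacle is justifying the positivity input exactly in this normalization. We must check that the cited positivity is stated for normalized self-dual simples against the normalized quantum variables of an arbitrary ambient monoidal seed, not just the initial one. This should follow from the fact that every ambient seed reachable by mutation is again a monoidal seed of real self-dual simples, for which \cite{kashiwara2019laurent} applies. We must also check that the half-integer powers in the normalization only contribute factors $q^{c/2}$, which do not affect nonnegativity. The remainder is bookkeeping.
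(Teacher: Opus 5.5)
Your proposal is correct and follows essentially the same route as the paper. Both arguments take the positive ambient Laurent expansion from \cite[Lemma~2.3]{kashiwara2019laurent}, specialize at $q^{1/2}=1$, invoke uniqueness of Laurent expansion in the algebraically independent ambient variables, and conclude via the fact that a nonzero Laurent polynomial with nonnegative coefficients cannot vanish at $1$. The paper settles your two ``bookkeeping'' points by clearing a single ambient seed-monomial denominator: this identifies the coefficients as graded Jordan--H\"older multiplicities, giving positivity for any reachable seed, and it makes specializing $z_L$ compatible with specializing its Laurent expansion.
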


\begin{proof}
The ambient positive Laurent phenomenon
\cite[Lemma~2.3]{kashiwara2019laurent} gives a finite expansion
\begin{equation}\label{eq:ambient-positive-Laurent}
z_L=\sum_{\mathbf a\in\mathbb Z^{\widehat J}}
c_{\mathbf a}(q^{1/2})X^{\mathbf a},
\qquad
c_{\mathbf a}(q^{1/2})\in\mathbb Z_{\geq0}[q^{\pm1/2}],
\end{equation}
where $X^{\mathbf a}$ denotes the normalized toric monomial.
For completeness, positivity follows by clearing one ambient
seed-monomial denominator. The resulting left-hand side is, up to
a power of $q^{1/2}$, the class of a convolution module, while the
right-hand monomials are powers of $q^{1/2}$ times pairwise distinct
real simple classes. Their coefficients are therefore graded
Jordan--H\"older multiplicities. Toric normalization changes these
coefficients only by powers of $q^{1/2}$.

Specialization of \eqref{eq:ambient-positive-Laurent} gives
\[
z_L\big|_{q^{1/2}=1}
=\sum_{\mathbf a}c_{\mathbf a}(1)x^{\mathbf a}.
\]
This is an identity in the classical ambient Laurent ring; its
compatibility with the Grothendieck-ring specialization can also
be checked after clearing the same monomial denominator.
The full family $(x_k)_{k\in\widehat J}$ is algebraically independent.
The hypothesis and uniqueness of Laurent expansion consequently imply
\[
c_{\mathbf a}(1)=0
\qquad\text{if }\mathbf a|_{\widehat J\setminus J}\neq0.
\]
A Laurent polynomial with nonnegative integer coefficients vanishes
at $1$ only if it is zero. Thus all such $c_{\mathbf a}$ vanish
identically, proving the assertion.
\end{proof}

\begin{theorem}\label{thm:categorification}
Assume finite ADE type. Let $v\leq w\in W$, and let
$\overline w$ be a reduced expression of $w$. Then, with the
quantum-seed normalization and the common coefficient ring,
\[
K(\widetilde{\mathscr{C}}_{w,v})=\cA_q(\overline{v},\overline{w}).
\]
In particular, \(\widetilde{\mathscr{C}}_{w,v}\) is a monoidal categorification of the quantum cluster algebra \(\cA_q(\overline{v},\overline{w})\).
\end{theorem}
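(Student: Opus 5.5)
We establish the two inclusions between $K(\widetilde{\mathscr C}_{w,v})$ and $\cA_q(\overline v,\overline w)$ inside the common skew field of fractions.

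\textbf{Lower inclusion.} Theorem~\ref{thm:clustercat} gives $\overline{\cA}_q(\mathbf s(\overline v,\overline w))\subset K(\mathscr C_{w,v})$. The frozen variables of $\mathbf s(\overline v,\overline w)$ must then be shown to become invertible in $K(\widetilde{\mathscr C}_{w,v})$. By Theorem~\ref{thm:dividminors}, every frozen initial variable is a multiplicity-one factor of some determinantial module. I expect that the frozen ones are exactly the factors of the central modules $M\binom{w\varpi_j}{v\varpi_j}$: these are the images under $\Phi^{\ell(v)}$ of the last unselected occurrences, together with the boundary vertices frozen by deletion. If $N\simeq F\circ L$ with $N$ inverted, then $F$ is invertible in the localization, with inverse $L\circ N^{-1}$ up to shift. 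Hence $\cA_q(\overline v,\overline w)\subset K(\widetilde{\mathscr C}_{w,v})$. Here I would use that $K(\widetilde{\mathscr C}_{w,v})$ is the Ore localization described in Theorem~\ref{thm:inCwv}(2).

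\textbf{Upper inclusion.} By Lemma~\ref{lem:uppercluster} it suffices to prove $K(\widetilde{\mathscr C}_{w,v})\subset U_q(\overline v,\overline w)$. Since classes of self-dual simples span, and we localize at frozen objects, it is enough to show that $z_L=\Phi([L])$ lies in the quantum torus $\cT(\mathbf t')$ for every simple $L\in\mathscr C_{w,v}$ and every seed $\mathbf t'$ mutation-equivalent to $\mathbf s(\overline v,\overline w)$. Fix such a seed. As in the proof of Theorem~\ref{thm:clustercat}, it lifts to an ambient seed $\widehat{\mathbf t}$ of $\mathscr C_w$ with vertex set $\widehat J\supset J$, where the deleted vertices carry the ambient variables excluded by \eqref{eq:notincwv-new}.

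Now specialize at $q^{1/2}=1$. By Theorem~\ref{thm:inCwv}(2), $z_L|_{q=1}$ is a regular function on $\mathring{\mathcal B}_{v,w}$. By Theorem~\ref{thm:classical-upper-richardson}, Theorem~\ref{thm:menard}, and Leclerc's realization, this function lies in $U(\overline v,\overline w)\subset\CC[x_j^{\pm1}\mid j\in J]$, where the $x_j$ are the specialized variables of $\mathbf t'$. Lemma~\ref{lem:positive-specialization} then lifts this support statement: $z_L$ lies in the quantum subtorus generated by the $X_j^{\pm1}$ with $j\in J$. This subtorus is $\cT(\mathbf t')$, because the restricted $\Lambda$ is the restriction of the ambient one.

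\textbf{Main obstacle.} The hardest step is the compatibility of the classical identification. One must check that the map $K_{\mathrm{cl}}(\widetilde{\mathscr C}_{w,v})\simeq\CC[\mathring{\mathcal B}_{v,w}]$ sends the specialized ambient cluster variables $x_j$ ($j\in J$) to the variables of the Bao--Ye/Leclerc seed. Equivalently, restriction from $\CC[N(w)]$ to the Richardson variety must be compatible with the ambient function field, so that the hypothesis of Lemma~\ref{lem:positive-specialization} is an honest Laurent-support statement in the ambient variables. My first attempt would use \eqref{eq:Ddelta} and Theorem~\ref{thm:menard} to match initial variables exactly, and then propagate the match along mutations using identical exchange relations. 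A second delicate point is that the deleted ambient variables must not vanish identically. I would handle this by working in the ambient Laurent ring, before restriction, and using that the Richardson realization embeds $\CC[\mathring{\mathcal B}_{v,w}]$ into the Laurent ring of the retained variables.
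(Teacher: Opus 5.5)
Your treatment of the upper inclusion matches the paper's Steps~1--2. The Richardson ring is realized in $\operatorname{Frac}(R_w)$ as a localization of the invariant subring $R_{w,v}=K_{\mathrm{cl}}(\mathscr C_{w,v})$, so no deleted variable is ever specialized; this disposes of your second ``delicate point''. Theorem~\ref{thm:menard} matches the initial functions, identical exchange relations propagate the match, Lemma~\ref{lem:positive-specialization} lifts the Laurent support, and Lemma~\ref{lem:uppercluster} closes the sandwich. The genuine gap is the comparison of the two localizations. You state it only as an expectation, yet both inclusions depend on it.

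\textbf{What each inclusion needs.} For the lower inclusion, every frozen module $X_a$ of $\mathbf s(\overline v,\overline w)$ must be a factor of some $D_j:=M\binom{w\varpi_j}{v\varpi_j}$. For the upper inclusion, your phrase ``we localize at frozen objects'' presupposes the converse: each $D_j$ must be, up to shift, a convolution monomial in frozen modules. Otherwise $[D_j]^{-1}$ need not lie in the tori $\mathcal T(\mathbf t')$, since a cluster monomial containing a mutable $X_k$ is not a unit in the torus of $\mu_k\mathbf t'$. Your argument then yields only $K(\mathscr C_{w,v})\subset U_q(\overline v,\overline w)$, not the localized inclusion.

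\textbf{Why the earlier results do not suffice.} Theorem~\ref{thm:dividminors} tells you two things: each variable divides the minor at position $\Phi^{\ell(v)}(k)$, and $D_j$ is a cluster monomial when $(j,n_j)$ is unselected. It does not tell you that every frozen variable divides a central module. It does not tell you that $D_j$ involves no mutable variable. It says nothing about $D_j$ when $(j,n_j)$ is selected.

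\textbf{How the paper fills it (Step~3).} By Leclerc's Proposition~4.1 and Corollary~4.4, transported through Theorem~\ref{thm:menard}, $d_j=\prod_a x_a^{m_{aj}}$, and every frozen $x_a$ occurs for some $j$. The coefficient is $1$ because $d_j$ is the cluster character of a projective-injective object. Let $L_j$ be the self-dual normalization of the corresponding frozen convolution monomial. It is simple because frozen modules strongly commute, and it has the same specialization as $D_j$. Linear independence of specialized self-dual simple classes then forces $D_j\simeq L_j$.

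This one identification gives both directions. Each $X_a$ is a factor of an inverted $D_j$, and each $D_j$ is a frozen monomial, hence invertible in every quantum torus defining $U_q$.
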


\begin{proof}
We work in the fixed ambient skew field of $K(\mathscr C_w)$.
The case $v=e$ is the usual localized monoidal categorification,
so assume $\ell(v)>0$.

\medskip
\noindent
\textit{Step 1: the classical realization in the ambient field.}
Put $R_{w,v}:=K_{\mathrm{cl}}(\mathscr C_{w,v})\subset R_w$.
By \cite[Theorem~2.20(ii)(c) and Remarks~2.19, 2.21(ii)]{kashiwara2018monoidal},
the fixed matrix-coefficient realization identifies $R_{w,v}$ with
the following actual subring of $\mathbb C[N]$:
\[
\begin{gathered}
H_v:=N\cap vN^-v^{-1},\qquad
H'_w:=N\cap wNw^{-1},\\
R_{w,v}
=\{f\in\mathbb C[N]\mid
f(a n b)=f(n)\text{ for all }a\in H_v,\ n\in N,\ b\in H'_w\}.
\end{gathered}
\]
These groups are precisely $N(v^{\mathrm L})$ and
$N^{\prime}(w^{\mathrm L})$ in the transported Leclerc convention.
Set
\[
D_j:=M\binom{w\varpi_j}{v\varpi_j},\qquad
d_j:=\Phi([D_j])\big|_{q^{1/2}=1}
=\Delta_{v\varpi_j,w\varpi_j}\qquad(j\in I).
\]
Leclerc's localization theorem
\cite[Theorem~2.12]{leclerc2016cluster} realizes the classical
Richardson coordinate ring as
\[
R_{w,v}[d_j^{-1}\mid j\in I]
\subset\operatorname{Frac}(R_w).
\]
By Theorem~\ref{thm:menard}, its initial cluster variables are exactly
the specializations of our retained ambient variables. Consequently,
Theorem~\ref{thm:classical-upper-richardson} identifies this subring
with $U(\overline v,\overline w)$ in the same function field.
The identification continues to hold after every retained mutation,
since both sides use the same initial functions and exchange relations.
In particular, for every retained seed $\mathbf t$,
\begin{equation}\label{eq:classical-retained-Laurent}
R_{w,v}\subset
\mathbb C[x_j(\mathbf t)^{\pm1}\mid j\in J]
\subset\operatorname{Frac}(R_w),
\end{equation}
where $J$ is the retained vertex set. This uses a localization of
an invariant subring, not a specialization of deleted variables.

\medskip
\noindent
\textit{Step 2: the quantum upper inclusion.}
Let $\mathbf t$ be any seed obtained from
$\mathbf s(\overline v,\overline w)$ by retained mutations.
The zero-row argument in the proof of Theorem~\ref{thm:clustercat}
lifts this sequence to an ambient monoidal seed
$\widehat{\mathbf t}$ of $\mathscr C_w$, whose vertex set is
$\widehat J=J\sqcup E$, with $E$ the deleted vertex set.
The restricted toric frame on $J$ is precisely that of $\mathbf t$.
For a self-dual simple $L\in\mathscr C_{w,v}$,
\eqref{eq:classical-retained-Laurent} and
Lemma~\ref{lem:positive-specialization} give
\[
\Phi([L])\in\mathcal T_q(\mathbf t).
\]
The self-dual simple classes span $K(\mathscr C_{w,v})$ over
$\KK$, and $\mathbf t$ was arbitrary. Intersecting these
quantum tori inside the fixed ambient skew field therefore yields
\begin{equation}\label{eq:subsetKw-cat}
K(\mathscr C_{w,v})
\subset U_q(\overline v,\overline w).
\end{equation}

\medskip
\noindent
\textit{Step 3: comparison of localizations.}
Let \(J_{\mathrm{fr}}\subset J\) be the frozen vertex set, and let
\(X_a\) be the frozen simple module corresponding to
\(a\in J_{\mathrm{fr}}\). Set
\[
    x_a:=\Phi([X_a])\big|_{q^{1/2}=1}.
\]
By Theorem~\ref{thm:menard} and
\cite[Proposition~4.1 and Corollary~4.4]{leclerc2016cluster},
there are integers \(m_{aj}\geq0\) such that
\[
    d_j=\prod_{a\in J_{\mathrm{fr}}}x_a^{m_{aj}},
\]
and every frozen variable occurs in at least one of these
factorizations. The coefficient is \(1\), since \(d_j\) is the
cluster character of the corresponding projective-injective
object and cluster characters are multiplicative on direct sums.

Let \(L_j\) be the self-dual normalization of the convolution
monomial
\[
    \mathop{\bigcirc}_{a\in J_{\mathrm{fr}}}
    X_a^{\circ m_{aj}}.
\]
Because the frozen modules strongly commute, \(L_j\) is simple.
Both \(L_j\) and \(D_j\) specialize to \(d_j\). Since the
specialized classes of self-dual simple modules form a basis, it
follows that
\[
    D_j\simeq L_j.
\]

Consequently, localization at the \(D_j\)'s is the same as
localization at all frozen modules. Indeed, every \(D_j\) is a
frozen convolution monomial. Conversely, if \(X_a\) occurs in
\(D_j\), write, up to a grading shift,
\[
    D_j\simeq X_a\circ L
\]
for another frozen convolution monomial \(L\). After \(D_j\) is
inverted, the appropriate grading shift of \(L\circ D_j^{-1}\)
is an inverse of \(X_a\).

Therefore, by \eqref{eq:subsetKw-cat} and
Theorems~\ref{thm:inCwv} and~\ref{thm:clustercat},
\[
    \mathcal A_q(\overline v,\overline w)
    \subset K(\widetilde{\mathscr C}_{w,v})
    \subset U_q(\overline v,\overline w).
\]
Here the upper inclusion extends through the localization because
the \(D_j\)'s are frozen monomials and hence are invertible in
every quantum torus defining \(U_q\). By
Lemma~\ref{lem:uppercluster}, both inclusions are equalities. The
realization of normalized cluster monomials and exchange sequences
from Theorem~\ref{thm:clustercat} is preserved under localization,
which proves the monoidal categorification statement.
\end{proof}

\begin{remark}
The proof uses the classical realization only to determine the
support of an ambient Laurent expansion. Positivity is essential:
without it, a nonzero coefficient such as $q^{1/2}-1$ could disappear
at $q^{1/2}=1$. We neither assume that specialization is injective
nor form an ordinary quantum quotient by equations $X_e=1$.
In particular, the proof is independent of any Laurent-family
construction for $\mathscr C_{w,v}$.
\end{remark}


\begin{thebibliography}{CGGLSS25}

\bibitem[Bi24]{bi2024richardson}
Yingjin Bi,
\emph{Monoidal categorification on open Richardson varieties},
\href{https://arxiv.org/abs/2409.04715v4}{arXiv:2409.04715v4},
2024 (revised 2025).

\bibitem[BY25]{bao2025upper}
Huanchen Bao and Jeff York Ye,
\emph{Upper cluster structure on Kac--Moody Richardson varieties},
arXiv:2506.10382, 2025.

\bibitem[CGGLSS25]{casals2025cluster}
Roger Casals, Eugene Gorsky, Mikhail Gorsky, Ian Le, Linhui Shen, and Jos\'e Simental,
\emph{Cluster structures on braid varieties},
J. Amer. Math. Soc. 38 (2025), no.~2, 369--479.

\bibitem[GLSB26]{galashin2025braid}
Pavel Galashin, Thomas Lam, and Melissa Sherman-Bennett,
\emph{Braid variety cluster structures, II: General type},
Invent. Math. 243 (2026), no.~3, 1079--1127.

\bibitem[GLS13]{geiss2013cluster}
Christof Gei{\ss}, Bernard Leclerc, and Jan Schr\"oer,
\emph{Cluster structures on quantum coordinate rings},
Selecta Math. (N.S.) 19 (2013), no.~2, 337--397.

\bibitem[GLS13b]{geiss2013factorial}
Christof Gei{\ss}, Bernard Leclerc, and Jan Schr\"oer,
\emph{Factorial cluster algebras},
Doc. Math. 18 (2013), 249--274.

\bibitem[Ing19]{ingermanson2019cluster}
Grace Ingermanson,
\emph{Cluster algebras of open Richardson varieties},
Dissertation, 2019.

\bibitem[KKK18]{kang2018symmetric}
Seok-Jin Kang, Masaki Kashiwara, and Myungho Kim,
\emph{Symmetric quiver Hecke algebras and $R$-matrices of quantum affine algebras},
Invent. Math. 211 (2018), no.~2, 591--685.

\bibitem[KKKO18]{kang2018monoidal}
Seok-Jin Kang, Masaki Kashiwara, Myungho Kim, and Se-jin Oh,
\emph{Monoidal categorification of cluster algebras},
J. Amer. Math. Soc. 31 (2018), no.~2, 349--426;
\href{https://arxiv.org/abs/1801.05145}{arXiv:1801.05145} (merged version).

\bibitem[Kas93]{kashiwara1993global}
Masaki Kashiwara,
\emph{Global crystal bases of quantum groups},
Duke Math. J. 69 (1993), 455--485.

\bibitem[KK19]{kashiwara2019laurent}
Masaki Kashiwara and Myungho Kim,
\emph{Laurent phenomenon and simple modules of quiver Hecke algebras},
Compos. Math. 155 (2019), no.~12, 2263--2295.

\bibitem[KKOP18]{kashiwara2018monoidal}
Masaki Kashiwara, Myungho Kim, Se-jin Oh, and Euiyong Park,
\emph{Monoidal categories associated with strata of flag manifolds},
Adv. Math. 328 (2018), 959--1009.

\bibitem[KKOP23]{kashiwara2023localizations}
Masaki Kashiwara, Myungho Kim, Se-jin Oh, and Euiyong Park,
\emph{Localizations for quiver Hecke algebras II},
Proc. Lond. Math. Soc. 127 (2023), no.~4, 1134--1184.

\bibitem[KKOP26]{kashiwara2026seeds}
Masaki Kashiwara, Myungho Kim, Se-jin Oh, and Euiyong Park,
\emph{Monoidal seeds of the categories $\mathcal C_{w,v}$ over quiver
Hecke algebras},
\href{https://arxiv.org/abs/2608.15020v2}{arXiv:2608.15020v2}, 2026.

\bibitem[KL09]{khovanov2009diagrammatic}
Mikhail Khovanov and Aaron D. Lauda,
\emph{A diagrammatic approach to categorification of quantum groups I},
Represent. Theory 13 (2009), no.~14, 309--347.

\bibitem[Lec16]{leclerc2016cluster}
Bernard Leclerc,
\emph{Cluster structures on strata of flag varieties},
Adv. Math. 300 (2016), 190--228.

\bibitem[Lus10]{lusztig2010introduction}
George Lusztig,
\emph{Introduction to quantum groups},
Modern Birkh\"auser Classics, Birkh\"auser/Springer, New York, 2010.

\bibitem[Men22]{menard2022cluster}
\'Etienne M\'enard,
\emph{Cluster algebras associated with open Richardson varieties: an algorithm to compute initial seeds},
arXiv:2201.10292, 2022.

\bibitem[Qin24]{qin2024analogs}
Fan Qin,
\emph{Analogs of the dual canonical bases for cluster algebras from Lie theory},
\href{https://arxiv.org/abs/2407.02480v4}{arXiv:2407.02480v4}.

\bibitem[SSB24]{serhiyenko2024leclerc}
Khrystyna Serhiyenko and Melissa Sherman-Bennett,
\emph{Leclerc's conjecture on a cluster structure for type $A$ Richardson varieties},
Adv. Math. 447 (2024), 109698.

\bibitem[Ste05]{stembridge2005tight}
John R. Stembridge,
\emph{Tight quotients and double quotients in the Bruhat order},
Electron. J. Combin. 11 (2005), no.~2, Research Paper~14, 41 pp.

\bibitem[TW16]{tingley2016mirkovic}
Peter Tingley and Ben Webster,
\emph{Mirkovi\'c--Vilonen polytopes and Khovanov--Lauda--Rouquier algebras},
Compos. Math. 152 (2016), no.~8, 1648--1696.

\end{thebibliography}

\end{document}